\numberwithin{equation}{section}
\newcommand{\field}[1]{\mathbb{#1}}
\newcommand{\Z}{\field{Z}}
\newcommand{\R}{\field{R}}
\newcommand{\C}{\field{C}}
\def\cC{\mathscr{C}}
\def\cE{\mathscr{E}}
\def\mA{\mathcal{A}}
\def\mB{\mathcal{B}}
\def\mD{\mathcal{D}}
\def\mE{\mathcal{E}}
\def\mF{\mathcal{F}}
\def\mT{\mathcal{T}}
\def\mU{\mathcal{U}}
\def\mV{\mathcal{V}}
\newcommand\mS{\mathcal{S}}
\def\Im{{\rm Im}}
\def\la{\langle}
\def\ra{\rangle}
\DeclareMathOperator{\End}{End}
\DeclareMathOperator{\Id}{Id}
\DeclareMathOperator{\tr}{Tr}
\DeclareMathOperator{\ind}{ind}
\DeclareMathOperator{\ch}{ch}
\newtheorem{thm}{Theorem}[section]
\newtheorem{lemma}[thm]{Lemma}
\newtheorem{prop}[thm]{Proposition}
\newtheorem{cor}[thm]{Corollary}
\theoremstyle{definition}
\newtheorem{rem}[thm]{Remark}
\theoremstyle{definition}
\newtheorem{defn}[thm]{Definition}
\newcommand{\be}{\begin{eqnarray}}
\newcommand{\ee}{\end{eqnarray}}
\newcommand{\ov}{\overline}
\newcommand{\var}{\varepsilon}
\numberwithin{equation}{section}
\numberwithin{thm}{section}
\newcommand{\comment}[1]{}
\begin{document}
	
	\title{Real embedding and equivariant eta forms}
	
	\author{Bo Liu}
	
	\address{Department of Mathematics, 
		East China Normal University, 
		500 Dongchuan Road, 
		Shanghai, 200241 
		P.R. China}
	\email{boliumath@outlook.com}
	
	\begin{abstract}
	In \cite{MR1214954}, Bismut and Zhang establish a $\mathrm{mod}\, \Z$ 
	embedding formula of Atiyah-Patodi-Singer reduced eta 
	invariants. 
	In this paper, we explain the hidden $\mathrm{mod}\, \Z$ term as a 
	spectral flow
	and 
	extend this embedding formula to the 
	equivariant family case. In this case, the spectral flow is generalized 
	to  
	the equivariant chern character of some equivariant Dai-Zhang higher 
	spectral flow.
	\end{abstract}
	\maketitle

	{\bf Keywords:} Equivariant eta form; index theory and fixed point 
	theory; higher spectral flow; direct image.
	
	{\bf 2010 Mathematics Subject Classification: } 58J20, 58J28, 58J30, 
	58J35.
	
	\tableofcontents
	
	\setcounter{section}{-1}
	
	\section{Introduction} \label{s00}
	
	Let $i:Y\rightarrow X$ be an embedding between two odd dimensional 
	closed
	oriented spin manifolds. For any Hermitian vector bundle 
	$\mu$ over 
	$Y$
	carrying a Hermitian connection, 
	under a natural assumption,
	Bismut and Zhang \cite{MR1214954} 
	establish
	a $\mathrm{mod}\, \Z$ formula, expressing the Atiyah-Patodi-Singer 
	reduced eta invariant \cite{MR0397797} of 
	certain
	direct image of $\mu$ over $X$,
	through 
	the reduced eta invariant of the bundle $\mu$ over $Y$,  up to some 
	geometric
	Chern-Simons current. 
	
	In this paper, we explain the hidden $\mathrm{mod}\, \Z$ term as a 
	spectral flow in 
	Bismut-Zhang embedding formula and 
	 extend this embedding formula to the 
	equivariant family case. In this case, the spectral flow is generalized 
	to  
	the equivariant chern character of some equivariant Dai-Zhang higher 
	spectral flow \cite{MR1638328}.
	
	The main motivation of this generalization is to look for a general
	Grothendieck-Riemann-Roch theorem in the equivariant differential 
	K-theory, which is already established in many important cases 
	\cite{MR2602854,Bunke2009,MR3182258}. Roughly speaking, the differential 
	K-theory is the smooth 
	version of the arithmetic K-theory in Arakelov geometry. Our main result 
	here is expected to play the same role in the equivariant differential 
	K-theory of the Bunke-Schick model \cite{Bunke2009,MR3182258,Liu2016} as 
	the Bismut-Lebeau embedding formula \cite{MR1188532} does in the proof 
	of Arithmetic 
	Grothendieck-Riemann-Roch theorem in Arakelov 
	geometry.

	In this paper, we use the language of the Clifford modules.

Let $\pi:W\rightarrow B$ be a smooth submersion of smooth oriented 
manifolds with closed fibres $Y$. Let $TY=TW/B$ be the relative tangent 
bundle to the fibres $Y$. Let $T_{\pi}^HW$ be a horizontal subbundle of 
$TW$. Let $g^{TY}$ be a Riemannian metric on $TY$.	Let $C(TY)$ be the 
Clifford algebra bundle of $(TY, g^{TY})$ and $(\mE,h^{\mE})$ be
a $\Z_2$-graded self-adjoint $C(TY)$-module with Clifford connection 
$\nabla^{\mE}$ (cf. (\ref{bl0959}) and (\ref{e01024})). Let $G$ be a compact 
Lie group 
which acts on $W$ and $B$ such that for any 
$g\in G$, $\pi\circ g=g\circ\pi$. We assume that the action of $G$ preserves 
the horizontal bundle and 
the orientation of $TY$ and could be lifted on $\mE$ such that it is 
compatible with the Clifford action and the $\Z_2$-grading.
We assume that $g^{TY}$, $h^{\mE}$, $\nabla^{\mE}$ are $G$-invariant. 
For any $g\in G$, if the group action is trivial on $B$, the equivariant 
Bismut-Cheeger eta form 
$\tilde{\eta}_g(\mF, \mA)\in 
\Omega^*(B,\C)/\mathrm{Im}\,d$ is defined in Definition \ref{d017} up to 
exact forms with respect to the equivariant geometric family $\mF=(W, \mE, 
T_{\pi}^HW, g^{TY}, h^{\mE}, \nabla^{\mE})$ (cf. Definition \ref{d001}) over 
$B$ 
and a
perturbation operator $\mA$ (cf. Definition \ref{d179}). Remark that if $B$ 
is a 
point and $\dim Y$ is odd, then the equivariant eta form here degenerates to 
the 
canonical equivariant reduced eta invariant by taking a special perturbation 
operator \cite[Remark 2.20]{Liu2016}.

	Let $i:W\rightarrow V$ be an equivariant embedding of smooth 
	$G$-equivariant oriented manifolds with even codimension. Let 
	$\pi_V:V\rightarrow B$ be a $G$-equivariant submersion with closed 
	fibres $X$, whose restriction $\pi_W:W\rightarrow B$ is an equivariant 
	submersion with closed fibres $Y$. We assume that $G$ acts on $B$ 
	trivially and the normal bundle $N_{Y/X}$ to $Y$ in $X$ has an 
	equivariant Spin$^c$ structure.

	\begin{center}
		\begin{tikzpicture}[>=angle 90]
		\matrix(a)[matrix of math nodes,
		row sep=2em, column sep=2.5em,
		text height=1.5ex, text depth=0.25ex]
		{&Y&W\\
			&X&V&B.\\};
		\path[->](a-1-2) edge node[left]{\footnotesize{$i$}} (a-2-2);
		\path[->](a-1-2) edge (a-1-3);
		\path[->](a-2-2) edge (a-2-3);
		\path[->](a-1-3) edge node[left]{\footnotesize{$i$}} (a-2-3);
		\path[->](a-2-3) edge node[above]{\footnotesize{$\pi_V$}} (a-2-4);
		\path[->](a-1-3) edge node[above]{\footnotesize{$\pi_W$}} (a-2-4);
		\end{tikzpicture}
	\end{center}
	
	Let $\mF_{Y}=(W, \mE_Y, T_{\pi_W}^HW, g^{TY}, h^{\mE_Y}, 
	\nabla^{\mE_Y})$ 
	and $\mF_{X}=(V, \mE_{X}, T_{\pi_V}^HV, g^{TX}, h^{\mE_{X}}, 
	\nabla^{\mE_{X}})$ 
	be two equivariant geometric families over $B$ such that $( 
	T_{\pi_W}^HW, g^{TY})$ and $(T_{\pi_V}^HV, g^{TX})$ satisfy the 
	totally geodesic condition (\ref{bl0960}). 

	We state our main result of this paper as follows.
	\begin{thm}\label{i01}
		Assume that the equivariant geometric families $\mF_Y$ and $\mF_X$ 
		satisfy the fundamental assumption (\ref{bl1011}) and (\ref{bl1013}).
		Let $\mA_Y$ and $\mA_{X}$ be the perturbation operators with respect 
		to 
		$\mF_{Y}$ and $\mF_X$. 
		Then 
		for any compact submanifold $K$ of $B$, there exists $T_0>2$ 
		such 
		that
		for any $T\geq T_0$, modulo exact forms, over $K$, we have
		\begin{multline}\label{i02}
		\tilde{\eta}_g(\mF_X, \mathcal{A}_X)=\tilde{\eta}_g(\mF_Y, 
		\mathcal{A}_Y)+\int_{X^g}\widehat{\mathrm{A}}_g(TX, 
		\nabla^{TX})\,\gamma_g^X(\mF_Y,\mF_X)
		\\
		+\ch_g(\mathrm{sf}_G\{D(\mF_X)+\mA_X, D(\mF_{X}) 
		+T\mathcal{V}+\mA_{T,Y}\})
		.
		\end{multline}
	Here $\gamma_g^X(\mF_Y,\mF_X)$ is the equivariant Bismut-Zhang current 
	defined in Definition \ref{bl0554}.	The last term in (\ref{i02}) is the 
	equivariant chern character of the equivariant Dai-Zhang higher spectral 
	flow which explains the $\mathrm{mod}\, \Z$ term in the original 
	Bismut-Zhang 
	embedding formula.
	\end{thm}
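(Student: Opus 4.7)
The plan is to connect the two perturbed Dirac operators $D(\mF_X)+\mA_X$ and $D(\mF_Y)+\mA_Y$ by a one-parameter family on $V$ and to apply a variation formula for the equivariant eta form along this path. Concretely, I would introduce the auxiliary operator $D(\mF_X)+T\mathcal{V}+\mA_{T,Y}$ on $V$, where the endomorphism $\mathcal{V}$ (built from the embedding datum, in the spirit of Bismut--Lebeau) is designed so that as $T\to\infty$ its spectrum concentrates in the $W$-directions and, after Getzler rescaling in the normal directions to $W\subset V$, recovers the Dirac operator of $\mF_Y$ twisted by the finite-rank model coming from the normal geometry of $N_{Y/X}$.

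First I would invoke the anomaly/variation formula for $\tilde{\eta}_g$: given a smooth path of equivariantly perturbed fibrewise Dirac-type operators $D_s$ over $B$, modulo exact forms
\begin{equation*}
\tilde{\eta}_g(D_1)-\tilde{\eta}_g(D_0) \;=\; \int_0^1 \bigl(\text{local transgression}\bigr)\, ds \;+\; \ch_g\bigl(\mathrm{sf}_G\{D_0,D_1\}\bigr).
\end{equation*}
Applied to the path from $D(\mF_X)+\mA_X$ to $D(\mF_X)+T\mathcal{V}+\mA_{T,Y}$, this produces the equivariant Chern character of the equivariant Dai--Zhang higher spectral flow appearing as the last term of \eqref{i02}, together with a Chern--Simons type transgression that I would arrange, via a suitable homotopy of connections and perturbations respecting the $G$-action, to be exact.

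Next, for $T$ sufficiently large (depending on $K$), I would establish, modulo exact forms over $K$,
\begin{equation*}
\tilde{\eta}_g\bigl(D(\mF_X)+T\mathcal{V}+\mA_{T,Y}\bigr) \;=\; \tilde{\eta}_g(\mF_Y,\mA_Y)+\int_{X^g}\widehat{\mathrm{A}}_g(TX,\nabla^{TX})\,\gamma_g^X(\mF_Y,\mF_X).
\end{equation*}
This is the adiabatic-limit step, modelled on Bismut--Lebeau and Bismut--Zhang: I would decompose the fibrewise heat kernel defining $\tilde{\eta}_g$ into pieces supported in a tubular neighbourhood of $W^g\subset V^g$ and outside it. Away from $W^g$, the zeroth-order term $T\mathcal{V}$ has uniformly bounded-below square, so its heat contribution is exponentially small as $T\to\infty$. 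Inside the tubular neighbourhood, Getzler rescaling in the normal directions replaces $T^2\mathcal{V}^2$ by a harmonic oscillator whose fibrewise supertrace produces $\widehat{\mathrm{A}}_g(N_{Y^g/X^g},\nabla^{N})$ coupled to the datum defining the Bismut--Zhang current $\gamma_g^X$; after rescaled-time integration, the pieces recombine into $\tilde{\eta}_g(\mF_Y,\mA_Y)$ plus $\widehat{\mathrm{A}}_g(TX,\nabla^{TX})\,\gamma_g^X(\mF_Y,\mF_X)$ integrated over $X^g$.

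The main obstacle is the uniform analytic control over $K\subset B$ of the heat kernel of $(D(\mF_X)+T\mathcal{V}+\mA_{T,Y})^2$ for large $T$ across all time regimes simultaneously, in particular the intermediate-$t$ range where neither the local Getzler model nor large-time spectral projection applies directly. The perturbation $\mA_{T,Y}$ must be constructed to match $\mA_Y$ in the rescaling limit while remaining $G$-invariant, verifying the fundamental assumptions \eqref{bl1011} and \eqref{bl1013}, and making the one-parameter family along which the spectral flow is measured have only finitely many kernel jumps; this is precisely what forces the restriction to a compact $K$ and the threshold $T_0$. Finally, identifying the Chern--Simons transgression produced by the variation formula with the current $\widehat{\mathrm{A}}_g(TX,\nabla^{TX})\,\gamma_g^X(\mF_Y,\mF_X)$ requires an explicit local index computation in the normal bundle, and careful bookkeeping to verify that the two descriptions agree modulo exact forms; I expect this identification, rather than the qualitative estimates, to be the delicate technical core of the proof.
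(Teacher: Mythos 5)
Your two-step plan --- (1) apply the anomaly formula to the path from $D(\mF_X)+\mA_X$ to $D(\mF_X)+T\mathcal{V}+\mA_{T,Y}$, with the Chern--Simons transgression vanishing because only the perturbation operator changes, and (2) identify $\tilde\eta_g(\mF_X,T\mathcal{V}+\mA_{T,Y})$ with $\tilde\eta_g(\mF_Y,\mA_Y)+\int_{X^g}\widehat{\mathrm A}_g(TX,\nabla^{TX})\,\gamma_g^X(\mF_Y,\mF_X)$ by a Bismut--Lebeau/Bismut--Zhang adiabatic-limit analysis --- is a correct conceptual decomposition. At the level of a single fibre ($B$ a point, $\dim X$ odd), it matches exactly what the paper does in Section 3.1, there packaged as a contour integral $\int_{\partial\mU}\beta_g=0$ in the $(T,u)$-plane whose four pieces $I_1,\dots,I_4$ correspond to your two steps together with the two eta-form endpoints; your step (1) is $I_2$, and step (2) combines $I_1$ and $I_4$ with the rescaling estimates of Theorems \ref{bl0584}, \ref{bl0588} and \ref{e03022}. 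Your remarks about the construction of $\mA_{T,Y}$ and about small-time Getzler rescaling versus large-time spectral localization are also on target for the point case (cf.\ Proposition \ref{bl0676}).

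The genuine gap is at the family level: you propose to carry out the adiabatic-limit step (2) directly over $K\subset B$ with the full Bismut superconnection (you even name "uniform analytic control ... across all time regimes" as the main obstacle), whereas the paper \emph{deliberately avoids doing this}. The uniform family-level estimates are essentially the content of \cite{Bismut1997,MR2097553} and are considerably heavier than the fibre-level ones. Instead, the paper proves the theorem only when $B$ is a point and $\dim X$ is odd (Theorem \ref{bl0981}), and then reduces the general case to it in Section 3.2 by a different mechanism: (i) the functoriality of equivariant eta forms, Theorem \ref{d188}, which relates the eta form of the composed fibration $V\to B\to\{\mathrm{pt}\}$ (viewing $V$ as a single fibre over a point) to the integral over $B$ of $\widehat{\mathrm A}(TB,\nabla^{TB})\,\ch(\mE_B/\mS,\nabla^{\mE_B})$ times the fibrewise eta form, up to a Chern--Simons correction; (ii) Lemma \ref{bl1004}, which produces a Clifford module $\mE_B$ on $B$ with $\widehat{\mathrm A}(TB,\nabla^{TB})\,\ch(\mE_B/\mS,\nabla^{\mE_B})$ cohomologous to a nonzero integer $q$; and (iii) the observation that the closed defect form $\Delta_B$ then integrates to zero over every closed submanifold of $B$ and is therefore exact. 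Your proposal omits this reduction entirely. As written, step (2) over a nontrivial base is an unproved and substantially harder assertion; you would need to either supply the family-level Bismut--Lebeau estimates or adopt the functoriality-plus-cohomological-cancellation route the paper actually uses.
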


	The proof of our main result here is highly related to the analytical 
	localization technique developed in 
	\cite{MR1188532,MR1316553,Bismut1997,MR2097553}.
		Thanks to the functoriality of equivariant eta forms 
	proved in \cite{Liu2016,MR3626562}, we only need to prove the embedding 
	formula when $B$ is a point and $\dim X$ is odd.
		
	Note that in \cite{MR2129895,MR2532714}, the authors give another proof 
	of the Bismut-Zhang embedding formula without using the analytical 
	localization technique. It is interesting to ask whether there is 
	another proof of our main result here from that line.
	
	Our paper is organized as follows. In Section 1,  we summarize the 
	definition and the properties of equivariant eta forms in \cite{Liu2016} 
	using the language of Clifford modules. In Section 2, we state our main 
	result. In this section, we also discuss an application on 
	the equivariant Atiyah-Hirzebruch direct image. In Section 3, we prove 
	our
	main result in two parts. In Section 3.1, we prove Theorem \ref{i01} 
	when the base space is a point using some intermediary 
	results along the lines of \cite{MR1214954}, the proof of which rely on 
	almost 
	identical arguments of \cite{MR1214954,MR1316553}. In Sections 3.2, we 
	explain how to use the functoriality to reduce Theorem \ref{i01} to the 
	case in Section 3.1. 
	
	To simplify the notations, we use the Einstein summation convention in 
	this paper.
	
	In the whole paper, we use the superconnection formalism of Quillen 
	\cite{MR790678}. 
	
	For a fibre bundle $\pi: V\rightarrow B$, we will often use the 
	integration of the differential forms along the oriented fibres $X$ in 
	this paper.
	Since the fibres may be odd dimensional,
	we must make precise our sign conventions.
	Let $\alpha$ be a differential form on $V$ which in local coordinates is 
	given by
	\begin{align}\label{e01135}
	\alpha=f\cdot \pi^*dy^{1}\wedge \cdots\wedge \pi^*dy^{q}\wedge 
	dx^1\wedge\cdots\wedge dx^n,
	\end{align}
	where $\{dy^p\}$ and $\{dx^i\}$ are the local frames of $T^*B$ and 
	$T^*X$, respectively.
	We set
	\begin{align}\label{e01136}
	\int_X\alpha=dy^{1}\wedge \cdots\wedge dy^{q}\cdot 
	\int_Xf\,  dx^1\wedge\cdots\wedge dx^n.
	\end{align}

\section{Equivariant eta forms}\label{s01}

In this section, we summarize the definition and the properties of 
equivariant eta forms in \cite{Liu2016,MR3626562} using the language of 
Clifford modules. 
Note that locally all manifolds are spin. The proofs of them are the same as 
in the spin case. In Section 1.1, we recall elementary results on Clifford 
algebras. In Section 1.2, we describe the geometry of the fibration and 
recall the Bismut superconnection. In Section 1.3, we define the equivariant 
eta form and state the anomaly formula. In Section 1.4, we explain the 
functoriality of the equivariant eta forms.

\subsection{Clifford algebras}\label{s0101}

Let $E^n$ be an oriented Euclidean vector space, such that $\dim E^n=n$, 
with 
orthonormal basis $\{e_i\}_{1\leq i\leq n}$. Let $C(E^n)$ be the complex 
Clifford algebra of $E^n$
defined by the relations
\begin{align}\label{e01001}
e_ie_j+e_je_i=-2\delta_{ij}.
\end{align}
Sometimes, we also denote by $c(e)$ the element of $C(E^n)$ corresponding to 
$e\in E^n$.

If $e\in E^n$, let $e^*\in (E^n)^*$ correspond to $e$ by the scalar product 
of $E^n$. The exterior algebra $\Lambda(E^n)^*\otimes_{\R}\C$ is a module of 
$C(E^n)$ defined by 
\begin{align}\label{bl0001} 
c(e)\alpha=e^*\wedge\alpha -\iota_{e}\alpha
\end{align} 
for any $\alpha\in\Lambda(E^n)^*\otimes_{\R}\C$. The map $a\mapsto c(a)\cdot 
1$, 
$a\in C(E^n)$,
induces an isomorphism of vector spaces
\begin{align}\label{bl0003}
\sigma:C(E^n)\rightarrow \Lambda(E^n)^*\otimes_{\R} \C.	
\end{align}

If $n$ is even, up to isomorphism, $C(E^n)$ has a unique irreducible module, 
$\mS(E^n)$, which is $\Z_2$-graded. We denote this $\Z_2$-grading of the 
spinor by $\tau$. 
Moreover, there are isomorphisms of $\Z_2$-graded algebras
\begin{align}\label{bl0006}
C(E^n)\simeq \End(\mS(E^n))\simeq \mS(E^n)\widehat{\otimes} \mS(E^n)^*.
\end{align}
 We consider the group $\mathrm{Spin}_n^c$ as a multiplicative subgroup of 
 the group of units of $C(E^n)$.
For the definition and the properties of the group $\mathrm{Spin}_n^c$, see 
\cite[Appendix D]{MR1031992}.
Note that $\mS(E^n)$ is also an irreducible representation of 
$\mathrm{Spin}_n^c$ induced by the Clifford action.

Let $F^m$ be another oriented Euclidean vector space such that $\dim F^m=m$, 
with 
orthonormal basis 
$\{f_p\}_{1\leq p\leq m}$. Then as Clifford algebras,
\begin{align}\label{e01087}
c(F^m\oplus E^n)\simeq c(F^m)\widehat{\otimes}c(E^n).
\end{align}
Moreover, We have $\mS(F^m\oplus 
E^n)\simeq\mS(F^m)\widehat{\otimes}\mS(E^n)$.

If one of $m$ and $n$ is even, we simply assume $m$ is even, the spinor 
$\mS(F^m\oplus E^n)$ is isomorphic to $\mS(F^m)\otimes \mS(E^n)$ with the 
Clifford actions 
\begin{align}\label{bl0988} 
c(f_p)\widehat{\otimes} 1\mapsto c(f_p)\otimes 
1,\quad1\widehat{\otimes}c(e_i)\mapsto \tau\otimes c(e_i).
\end{align}

If $m$ and $n$ are both odd, let
\begin{align}\label{bl0989} 
\Gamma_1=
\left(\begin{array}{cc}
0 & 1 \\
1 & 0
\end{array}
\right),\quad
\Gamma_2=
\left(\begin{array}{cc}
0 & \sqrt{-1} \\
- \sqrt{-1}  & 0
\end{array}
\right).
\end{align}
The spinor $\mS(F^m\oplus E^n)$ is isomorphic to $\mS(F^m)\otimes 
\mS(E^n)\otimes 
\mathbb{C}^2$
with Clifford actions 
\begin{align}\label{bl0990} 
c(f_p)\widehat{\otimes} 1\mapsto c(f_p)\otimes 
1\otimes \Gamma_1,\quad 1\widehat{\otimes}c(e_i)\mapsto 1\otimes 
c(e_i)\otimes \Gamma_2.
\end{align}
The $\Z_2$-grading of $\mS(F^m\oplus E^n)$ under this 
isomorphism is 
\begin{align}\label{bl0991} 
\Id_{\mS(F^m)}\otimes \Id_{\mS(E^n)}\otimes
\left(\begin{array}{cc}
1 & 0 \\
0 & -1
\end{array}
\right).
\end{align}	

Let $\tau^{\Lambda}$ be the canonical $\Z_2$-grading of $\Lambda(F^m)$.
Then $\Lambda(F^m)\widehat{\otimes}\mS(E^n)$ could be regarded as 
$\Lambda(F^m)\otimes \mS(E^n)$ with 
\begin{align}\label{bl0992} 
\alpha\widehat{\otimes} 
1\mapsto \alpha\otimes 1\quad 
1\widehat{\otimes}c(e_i)\mapsto\tau^{\Lambda}\otimes c(e_i)
\end{align}
for
$\alpha\in \Lambda(F^m)$.
Indeed,
\begin{align}\label{bl0993} 
(\alpha\otimes 1)(\tau^{\Lambda}\otimes c(e_i))=-(\tau^{\Lambda}\otimes 
c(e_i))(\alpha\otimes 1).
\end{align}
We abbreiate $c(f_p)\widehat{\otimes} 1$, $\alpha\widehat{\otimes} 1$, 
$1\widehat{\otimes}c(e_i)$ by $c(f_p)$, $\alpha$, $c(e_i)$. Then our 
notation here is the same as the usual one in \cite{Bismut1985}. The purpose 
of writing in this way is 
to explain the extension of some fibrewise operators in the following 
sections to the $\Z_2$-graded 
setting clearly.

\subsection{Bismut superconnection}\label{s0102}

Let $\pi:W\rightarrow B$ be a smooth submersion of smooth oriented 
manifolds with closed fibres $Y$. We assume that $B$ is connected. Remark 
that $W$ here is not assumed to be connected. 

Let $TY=TW/B$ be the relative tangent bundle to the fibres $Y$.
Then $TY$ is orientable.
Let $T_{\pi}^HW$ be a horizontal subbundle of $TW$ such that
\begin{align}\label{e01010}
	TW=T_{\pi}^HW\oplus TY.
\end{align}
The splitting (\ref{e01010}) gives an identification
\begin{align}\label{e01011}
	T_{\pi}^HW\cong \pi^*TB.
\end{align}
If there is no ambiguity, we will omit the subscript $\pi$ in $T_{\pi}^HW$.

Let $g^{TY}$, $g^{TB}$ be Riemannian metrics on $TY$, $TB$. We equip 
$TW=T_{}^HW\oplus TY$ with the Riemannian metric
\begin{align}\label{e01013}
	g^{TW}=\pi^*g^{TB}\oplus g^{TY}.
\end{align}

Let $\nabla^{TW}$, $\nabla^{TB}$ be the Levi-Civita connections on $(W, 
g^{TW})$, $(B, g^{TB})$. Let $P^{TY}$ be the projection
$
P^{TY}:TW=T_{}^HW\oplus TY \rightarrow TY.
$ Set
\begin{align}\label{e01014}
	\nabla^{TY}=P^{TY}\nabla^{TW}P^{TY}.
\end{align}
Then $\nabla^{TY}$ is a Euclidean connection on $TY$.

Let $\nabla^{TB,TY}$ be the connection on $TW=T_{}^HW\oplus TY$ defined by
\begin{align}\label{e01015}
	\nabla^{TB,TY}=\pi^*\nabla^{TB}\oplus\nabla^{TY}.
\end{align}
Then $\nabla^{TB,TY}$ preserves the metric $g^{TW}$ in (\ref{e01013}).

Set
\begin{align}\label{e01017}
S=\nabla^{TW}-\nabla^{TB,TY}.
\end{align}
Then $S$ is a 1-form on $W$ with values in antisymmetric elements of 
$\End(TW)$. By \cite[Theorem 1.9]{Bismut1985}, we know that $\nabla^{TY}$ 
and the $(3,0)$-tensor $g^{TW}(S(\cdot)\cdot,\cdot)$ only depend on 
$(T^HW,g^{TY})$.

Let $C(TY)$ be the Clifford algebra bundle of $(TY, g^{TY})$, whose fibre at 
$x\in W$ is the Clifford algebra
$C(T_xY)$ of the Euclidean vector space $(T_xY, g^{T_xY})$. 
A $\Z_2$-graded self-adjoint $C(TY)$-module,
\begin{align}\label{bl0959} 
\mE=\mE_+\oplus\mE_-,
\end{align}
 is a $\Z_2$-graded vector bundle equipped with a Hermitian metric $h^{\mE}$ 
 preserving the splitting 
 (\ref{bl0959}) and a 
 fiberwise Clifford multiplication $c$ of $C(TY)$ such that the action $c$ 
 restricted to $TY$ is skew-adjoint on $(\mE,h^{\mE})$.
Let $\tau^{\mE}$ be the $\Z_2$-grading of $\mE$ which is $\pm 1$ on 
$\mE_{\pm}$.

For a locally oriented orthonormal 
basis $e_1,\cdots,e_n$ of $TY$, we define the chirality operator on $\mE$ by
\begin{align}\label{bl0926} 
\Gamma=
\left\{
\begin{array}{ll}
(\sqrt{-1})^{n/2}c(e_1)\cdots c(e_n), & \hbox{if $n$ is even;} \\
\quad\quad\quad\quad \Id_{\mE}, & \hbox{if $n$ is odd.}
\end{array}
\right.
\end{align}
Note that our definition here is different from \cite[Lemma 3.17]{MR2273508} 
when $n$ is odd.
Then $\Gamma$ does not depend on the choice of the basis and is globally 
defined. 
We note that $\Gamma^2=1$ and $[\tau^{\mE},\Gamma]=0$. 
Set
\begin{align}\label{bl0927}
	\tau^{\mE/\mS}=\tau^{\mE}\cdot\Gamma.
\end{align}
Then $\left(\tau^{\mE/\mS}\right)^2=1$.

Locally, we could write
\begin{align}\label{bl0994} 
\mE=\mS_0(TY)\widehat{\otimes}\xi_{\pm},
\end{align}
where $\mS_0(TY)$ is the spinor bundle for the (possibly non-existent) spin 
structure of $TY$ and $\xi_{\pm}$ is a $\Z_2$-graded vector bundle. Then 
$\Gamma$, $\tau^{\mE/\mS}$ and $\tau^{\mE}$ correspond to the 
$\Z_2$-gradings of $\mS_0(TY)$, $\xi_{\pm}$ and 
$\mS_0(TY)\widehat{\otimes}\xi_{\pm}$.

Let $\nabla^{\mE}$ be a Clifford connection on $\mE$ 
associated with $\nabla^{TY}$, that is,
$\nabla^{\mE}$ preserves $h^{\mE}$ and the splitting (\ref{bl0959}) and 
for any $U\in TW$, $Z\in \cC^{\infty}(W,TY)$,
\begin{align}\label{e01024}
	\left[\nabla_U^{\mE}, c(Z)\right]=c\left(\nabla^{TY}_UZ\right).
\end{align}

Let $G$ be a compact Lie group which acts on $W$ and $B$ such that for any 
$g\in G$, $\pi\circ g=g\circ\pi$.
We assume that the action of $G$ preserves the splitting (\ref{e01010}) and 
the orientation of $TY$ and could be lifted on $\mE$ such that it is 
compatible with the Clifford action and preserves the splitting 
(\ref{bl0959}).
We assume that $g^{TY}$, $h^{\mE}$, $\nabla^{\mE}$ are $G$-invariant.

\begin{defn}\label{d001}(Compare with 
\cite[Definition 2.2]{Bunke2009}, \cite[Definition 1.1]{Liu2016})
	An equivariant geometric family $\mF$ over $B$ is a family of 
	$G$-equivariant geometric data
	\begin{align}\label{d002}
		\mF=(W, \mE, T_{}^HW, g^{TY}, h^{\mE}, \nabla^{\mE})
	\end{align}
	described as above. We call the equivariant geometric family $\mF$ is 
	even (resp. odd) if
	for any connected component of fibres, the dimension of it is even 
	(resp. odd). 
\end{defn}

Let $D(\mF)$ be the fiberwise Dirac operator
\begin{align}\label{e01029}
D(\mF)=c(e_i)\nabla_{e_i}^{\mE}
\end{align}
associated with the equivariant geometric family $\mF$. Then the $G$-action 
commutes with $D(\mF)$.

For $b\in B$, let $\cE_{b}$ be the set of smooth sections over $Y_b$ of 
$\mE_b$. As in \cite{Bismut1985},
we will regard $\cE$ as
an infinite dimensional fibre bundle over $B$.
If $V\in TB$, let $V^H\in T_{}^HW$ be its horizontal lift in $T_{}^HW$ so 
that $\pi_*V^H=V$.
For any $V\in TB$, $s\in \cC^{\infty}(B,\cE)=\cC^{\infty}(W,\mE)$,
by \cite[Proposition 1.4]{MR853982}, the connection
\begin{align}\label{e01028}
	\nabla_V^{\cE,u}s:=\nabla_{V^H}^{\mE}s-\frac{1}{2}\la S(e_i)e_i, V^H \ra 
	\,s
\end{align}
preserves the $L^2$-product on $\cE$.
Let $\{f_p\}$ be a local orthonormal frame of $TB$ and $\{f^p\}$ be its dual.
We denote by $\nabla^{\cE,u}=f^p\wedge \nabla^{\cE,u}_{f_p}$.
Let $T$ be the torsion of $\nabla^{TB,TY}$.
We denote by
$
	c(T)=\frac{1}{2}\,c\left(T(f_p^H, f_q^H)\right)f^p\wedge f^q\wedge.
$
By \cite[(3.18)]{Bismut1985}, the rescaled Bismut superconnection
$
	\mathbb{B}_u:\cC^{\infty}(B,\Lambda(T^*B)\widehat{\otimes}\cE)\rightarrow\cC^{\infty}(B,
	 \Lambda(T^*B)\widehat{\otimes}\cE)
$
is defined by
\begin{align}\label{e01042}
	\mathbb{B}_u=\sqrt{u}D(\mF)+\nabla^{\cE,u}-\frac{1}{4\sqrt{u}}c(T).
\end{align}
Obviously, the Bismut superconnection $\mathbb{B}_u$ commutes with the 
$G$-action.
Furthermore, $\mathbb{B}_u^2$ is a $2$-order elliptic differential operator 
along the fibres $Y$.
Let $\exp(-\mathbb{B}_u^2)$ be the family of heat operators associated with 
the fiberwise elliptic operator $\mathbb{B}_u^2$.
From \cite[Theorem 9.51]{MR2273508}, we know that $\exp(-\mathbb{B}_u^2)$ is 
a smooth family of smoothing operators.

Let $P$ be a section of $\Lambda(T^*B)\widehat{\otimes}\End(\cE)$.
Set
\begin{align}\label{bl1008} 
	\tr_s[P]:=\tr[\tau^{\mE} P].
\end{align}
Here the trace operator on the right hand side of (\ref{bl1008}) only acts 
on $\mE$ and takes values in $\Lambda(T^*B)$.
We denote by $\tr^{\mathrm{odd/even}}_s[P]$ the part of $\tr_s[P]$
which takes values in odd or even forms.
We use the convention that if $\omega\in \Lambda(T^*B)$, 
\begin{align}\label{e01056}
	\tr_s[\omega P]=\omega\tr_s[P].
\end{align}
It is compatible with the sign convention (\ref{e01136}).
Set
\begin{align}\label{i16}
	\widetilde{\tr}[P]=
	\left\{
	\begin{array}{ll}
		\tr_s[P], & \hbox{if $\dim Y$ is even;} \\
		\tr_s^{\mathrm{odd}}[P], & \hbox{if $\dim Y$ is odd.}
	\end{array}
	\right.
\end{align}

\subsection{Equivariant eta forms}\label{s0103}

In this subsection, we state the definition and the anomaly formula of 
equivariant eta forms in the language of Clifford modules.

We assume that $G$ acts trivially on $B$.

Take $g\in G$ and set
$
W^g=\{x\in W: gx=x\}.
$
Then $W^g$ is a submanifold of $W$ and $\pi|_{W^g}:W^g\rightarrow B$ is a 
fibre bundle with closed fibres $Y^g$.
Let $N_{W^g/W}$ denote the normal bundle of $W^g$ in $W$, then 
$N_{W^g/W}=TY/TY^g$.
We also denote it by $N_{Y^g/Y}$.

We denote the differential of $g$ by $dg$
which gives a bundle isometry $dg: N_{Y^g/Y}\rightarrow N_{Y^g/Y}$. Since 
$g$ lies in a compact abelian Lie group,
we know that there is an orthonormal decomposition of smooth vector bundles 
over $W^g$
\begin{align}\label{e01046}
TY|_{W^g}=TY^g\oplus N_{Y^g/Y}=TY^g\oplus \bigoplus_{0<\theta\leq 
\pi}N(\theta),
\end{align}
where $dg|_{N(\pi)}=-\mathrm{id}$ and for each $\theta$, $0<\theta<\pi$, 
$N(\theta)$ is a complex vector bundle
on which $dg$ acts by multiplication by $e^{i\theta}$. Since $g$ preserves 
the orientation of $TY$ and $\det(dg|_{N(\pi)})=1$, by the property of 
isometry, $\dim N(\pi)$ is even. So the normal bundle $N_{Y^g/Y}$ is even 
dimensional.

Observe that if $N(\pi)=0$ or if $TY$ has a $G$-equivariant Spin$^c$ 
structure, then $TY^g$ is canonically oriented (cf. \cite[Proposition 
6.14]{MR2273508}, \cite[Proposition 2.1]{MR3626562}). In general, $TY^g$ is 
not necessary oriented.
For simplicity, in this paper we assume that $TY^g$ is oriented. In this 
case $N(\pi)$ is oriented. We fix an orientation of $TY^g$ which is induced 
by the orientations of $N_{\theta}$ and $TY$.

We remark here that if the fixed point sets are not oriented, we could also 
get the formulas in this paper in the sense of Berezin integral as in 
\cite[Theorem 6.16]{MR2273508}.

Let $E$ be an equivariant real Euclidean vector bundle over $W$.
We could get
the decomposition of real vector bundles over $W^g$ in the same way as 
(\ref{e01046}),
\begin{align}\label{e01047}
E|_{W^g}=\bigoplus_{0\leq\theta\leq \pi}E(\theta).
\end{align}
Here we also denote $E(0)$ by $E^g$. 

Let $\nabla^E$ be an equivariant Euclidean connection on $E$. Then it 
preserves the decomposition (\ref{e01047}).
Let $\nabla^{E^g}$ and $\nabla^{E(\theta)}$ be the corresponding induced 
connections on $E^g$ and $E(\theta)$,
and let $R^{E^g}$ and $R^{E(\theta)}$ be the corresponding curvatures.

Set
\begin{multline}\label{e01051}
\widehat{\mathrm{A}}_g(E,\nabla^{E})=\mathrm{det}^{\frac{1}{2}}\left(\frac{\frac{\sqrt{-1}}{4\pi}R^{E^g}}{\sinh
 \left(\frac{\sqrt{-1}}{4\pi}R^{E^g}\right)}\right)
\\
\cdot
\prod_{0<\theta\leq \pi}\left(\sqrt{-1}^{\frac{1}{2}\dim 
E(\theta)}\mathrm{det}^{\frac{1}{2}}\left(1-g
\exp\left(\frac{\sqrt{-1}}{2\pi}R^{E(\theta)}\right)\right)\right)^{-1}.
\end{multline}


Let $\End_{C(TY)}(\mE)$ be the set of endomorphisms of $\mE$ commuting with 
the Clifford action. Then it is a vector bundle over $W$.
As in \cite[Definition 3.28]{MR2273508}, for any $a\in \End_{C(TY)}(\mE)$, 
we define the relative trace $\tr^{\mE/\mS}:\End_{C(TY)}(\mE)\rightarrow \C$ 
by
\begin{align}\label{bl1009}
\tr^{\mE/\mS}[a]=
\left\{
\begin{array}{ll}
2^{-n/2}\tr_s[\Gamma a], & \hbox{if $n=\dim Y$ is even;} \\
2^{-(n-1)/2}\tr_s[a], & \hbox{if $n=\dim Y$ is odd.}
\end{array}
\right.
\end{align}

Let $R^{\mE}$ be the curvature of $\nabla^{\mE}$.
Let
\begin{multline}\label{e01038}
R^{\mE/\mS}:=R^{\mE}-\frac{1}{4}\la R^{TY}e_i, e_j\ra c(e_i)c(e_j)
\\
\in 
\cC^{\infty}(W, \pi^*\Lambda(T^*B)\otimes \End_{C(TY)}(\mE))
\end{multline}
be the twisting curvature of the $C(TY)$-module $\mE$ as in 
\cite[Proposition 3.43]{MR2273508}. 

By \cite[Lemma 6.10]{MR2273508}, along $W^g$, the action of $g\in G$ on 
$\mE$ may be identified with a section $g^{\mE}$ of 
$C(N_{Y^g/Y})\otimes_{\C} 
\End_{C(TY)}(\mE)$.
Let $\dim N_{Y^g/Y}=\ell_1$.
Under the isomorphism (\ref{bl0003}), $\sigma(g^{\mE})\in \cC^{\infty}(W^g, 
(\Lambda N_{Y^g/Y}^*\otimes_{\R}\C)\otimes_{\C} \End_{C(TY)}(\mE))$. Since 
we assume that 
$N_{Y^g/Y}$ is oriented, paring with the volume form, we could get the 
highest degree coefficient $\sigma_{\ell_1}(g^{\mE})\in \cC^{\infty}(W^g,  
\End_{C(TY)}(\mE))$ of $\sigma(g^{\mE})$.
 
Then we could define the localized relative Chern character 
$\ch_g(\mE/\mS,\nabla^{\mE})\in \Omega^*(W^g,\C)$ in the same way as 
\cite[Definition 6.13]{MR2273508} by
\begin{align}\label{bl1010} 
	\ch_g(\mE/\mS, 
	\nabla^{\mE}):=\frac{2^{\ell_1/2}}{\mathrm{det}^{1/2}(1-g|_{N_{Y^g/Y}})}\tr^{\mE/\mS}\left[\sigma_{\ell_1}(g^{\mE})\exp\left(-\frac{R^{\mE/\mS}|_{W^g}}{2\pi\sqrt{-1}}\right)\right].
\end{align}
Note that if $TY$ has an equivariant spin structure, the localized relative 
Chern character here is just the usual equivariant Chern character. 

Recall that if $B$ is compact, the equivariant $K$-group $K_G^0(B)$ is the 
Grothendieck group of the equivalent classes of the equivariant vector 
bundles over $B$. Let $\iota:B\rightarrow B\times S^1$ be a $G$-equivariant 
inclusion map. It is well known that if the $G$-action on $S^1$ is trivial,
\begin{align}\label{d124}
K_G^1(B)\simeq \ker\left(\iota^*:K_G^0(B\times S^1)\rightarrow 
K_G^0(B)\right).
\end{align}
For $x\in K_G^0(B)$, $g\in G$, the classical equivariant Chern character map 
sends $x$ to $\ch_g(x)\in H^{\mathrm{even}}(B,\C)$.
By (\ref{d124}), for $x\in K_G^1(B)$, we can regard $x$ as an element $x'$ 
in $K_G^0(B\times S^1)$. The odd equivariant Chern character map
\begin{align}\label{d130}
\ch_g:K_G^1(B)\longrightarrow H^{\mathrm{odd}}(B,\C)
\end{align}
is defined by (cf. \cite[(2.52)]{MR3626562})
\begin{align}\label{d131}
\ch_g(x):=\int_{S^1}\ch_g(x').
\end{align}
We adopt the sign notation in the integral as in (\ref{e01136}).

Furthermore, the classical construction of Atiyah-Singer
assigns to each equivariant geometric family $\mF$ its equivariant (analytic)
index $\ind(D(\mF))\in K_G^*(B)$ \cite{MR0285033,MR0279833} ($*=0$ or $1$ 
corresponds to $\mF$ is even or odd).

For $\alpha\in \Omega^j(B)$, set
\begin{align}\label{e01059}
\psi_B(\alpha)=\left\{
\begin{array}{ll}
\left(\frac{1}{2\pi \sqrt{-1}}\right)^{\frac{j}{2}}\cdot \alpha, & \hbox{if 
$j$ is even;} \\
\frac{1}{\sqrt{\pi}}\left(\frac{1}{2\pi 
\sqrt{-1}}\right)^{\frac{j-1}{2}}\cdot \alpha, & \hbox{if $j$ is odd.}
\end{array}
\right.
\end{align}

\begin{thm}\label{e01061}\cite[Theorem 2.2]{MR3626562}
	For any $u>0$ and $g\in G$, the differential form 
	$\psi_B\widetilde{\tr}[g\exp(-\mathbb{B}_u^2)]\in \Omega^{*}(B, \C)$ is 
	closed and its cohomology class
	is independent of $u$. As $u\rightarrow 0$, 
	\begin{align}\label{e01062}
	\lim_{u\rightarrow 
	0}\psi_B\widetilde{\tr}[g\exp(-\mathbb{B}_u^2)]=\int_{Y^g}\widehat{\mathrm{A}}_g(TY,\nabla^{TY})\,
	\ch_g(\mE/\mS,\nabla^{\mE}).
	\end{align}
	If $B$ is compact,  the differential form 
	$\psi_B\widetilde{\tr}[g\exp(-\mathbb{B}_u^2)]$
	represents $\ch_g(\ind(D(\mF)))$.

\end{thm}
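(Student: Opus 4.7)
My plan is to prove the three assertions (closedness with $u$-independent cohomology class, short-time localization, and representation of the equivariant index) in that order, reducing first to the case $\dim Y$ even by a suspension trick and focusing on the substantive analytic content at $u\to 0$.

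\textbf{Reduction to the even-fibre case.} When $\dim Y$ is odd, I would replace $\mF$ by $\mF\times S^1$, i.e. tensor the fibrewise Clifford module with $\mS(T S^1)$ and form the associated even Bismut superconnection $\mathbb{B}_u'$ with trivial $G$-action on the extra $S^1$ factor. Integration over $S^1$ converts the even supertrace into $\tr_s^{\mathrm{odd}}$, and the definitions (\ref{d131}) and (\ref{e01059}) of the normalisation $\psi_B$ and of the odd equivariant Chern character were set up precisely so that this identification is literal. This reduces every assertion below to the case of even $\dim Y$, where $\widetilde{\tr}=\tr_s$.

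\textbf{Closedness and $u$-independence.} The fundamental algebraic identity for a superconnection $\mathbb{B}_u$ is $[d,\tr_s[g\,\alpha]]=\tr_s[g\,[\mathbb{B}_u,\alpha]]$ for $\alpha\in\Lambda(T^*B)\widehat{\otimes}\End(\mE)$, using that $g$ commutes with everything in sight and with the $\Z_2$-grading. Applied to $\alpha=\exp(-\mathbb{B}_u^2)$ and using $[\mathbb{B}_u,\mathbb{B}_u^2]=0$, this gives
\begin{equation*}
d\,\tr_s[g\exp(-\mathbb{B}_u^2)]=0.
\end{equation*}
For the $u$-dependence, a standard Duhamel computation yields
\begin{equation*}
\frac{d}{du}\tr_s[g\exp(-\mathbb{B}_u^2)]=-\,d\,\tr_s\!\left[g\,\frac{d\mathbb{B}_u}{du}\exp(-\mathbb{B}_u^2)\right],
\end{equation*}
so the cohomology class is independent of $u$. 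The scalar factors in $\psi_B$ are constants and do not affect either statement.

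\textbf{Short-time limit via equivariant Getzler rescaling.} This is where the main work lies and where I expect the real obstacle. The operator $g\exp(-\mathbb{B}_u^2)$ has a smooth kernel $k_u(x,gx)$; as $u\to 0$ the off-diagonal contributions decay exponentially, so the limit is supported on the fixed submanifold $W^g$ with fibres $Y^g$. I would work in tubular coordinates for $Y^g\subset Y$, use the Lichnerowicz-type formula for $\mathbb{B}_u^2$ coming from the Clifford connection $\nabla^{\mE}$ and the curvature identity (\ref{e01038}), and then perform Getzler rescaling on the Clifford variables of $TY^g$ together with the parameter variables of $\Lambda(T^*B)$, while treating the Clifford variables transverse to $Y^g$ via the identification of $g^{\mE}$ with a section of $C(N_{Y^g/Y})\otimes\End_{C(TY)}(\mE)$ (cf.\ \cite[Lemma 6.10]{MR2273508}). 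The rescaled operator converges to a generalised harmonic oscillator whose kernel is computed by Mehler's formula; taking the supertrace (the top Clifford degree extraction corresponds exactly to $\sigma_{\ell_1}(g^{\mE})$ and the factor $2^{\ell_1/2}/\det^{1/2}(1-g|_{N_{Y^g/Y}})$ appearing in (\ref{bl1010})) yields precisely the integrand on the right side of (\ref{e01062}). The hardest technical point is uniform control of the rescaled heat kernel along the normal directions and the careful bookkeeping of the transverse $\det^{1/2}(1-g\exp(\tfrac{\sqrt{-1}}{2\pi}R^{E(\theta)}))$ contributions that assemble into $\widehat{\mathrm{A}}_g(TY,\nabla^{TY})$.

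\textbf{Identification with the equivariant index Chern character.} When $B$ is compact I would take $u\to\infty$. After possibly perturbing by a finite-rank smoothing operator so that the fibrewise Dirac operator $D(\mF)$ has locally constant dimensional kernel (the resulting class in $K_G^*(B)$ is the equivariant index), the long-time limit of $\exp(-\mathbb{B}_u^2)$ projects onto the finite-dimensional $\Z_2$-graded equivariant vector bundle $\ker D(\mF)\subset\cE$, and $\tr_s[g\exp(-\mathbb{B}_u^2)]$ converges to the Chern--Weil representative of $\ch_g(\ind D(\mF))$ built from the $G$-invariant projection connection. Since the cohomology class is $u$-independent, this identifies $\psi_B\widetilde{\tr}[g\exp(-\mathbb{B}_u^2)]$ with $\ch_g(\ind D(\mF))$ in cohomology for every $u$. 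The odd case then follows by the $S^1$-suspension reduction and the definition (\ref{d131}).
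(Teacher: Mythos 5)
This paper supplies no proof of the theorem: it is imported verbatim from \cite[Theorem 2.2]{MR3626562}, so there is no in-paper argument to compare your outline against. What you have written reconstructs the standard strategy that one finds in that reference and in \cite[Chapters 6, 9, 10]{MR2273508} --- closedness and $u$-independence by superconnection algebra, short-time localisation onto $W^g$ via equivariant Getzler rescaling, and long-time convergence to the kernel bundle for the identification with $\ch_g(\ind D(\mF))$ --- and that architecture is the right one.

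The step that does not survive scrutiny as written is the odd-to-even reduction. You propose suspending the \emph{fibre}, replacing $Y$ by $Y\times S^1$ and $\mE$ by $\mE\widehat{\otimes}\mS(TS^1)$, and then ``integrating over $S^1$''. But the resulting family is over the same base $B$, so $\tr_s[g\exp(-(\mathbb{B}_u')^2)]$ is already a form on $B$ and the prescribed $\int_{S^1}$ has nothing to act on; worse, the $K_G^0(B)$-index of the fibre-suspended family vanishes (the $K^0(\mathrm{pt})$-index of $D_{S^1}$ on the graded spinor of $S^1$ is zero, so the K\"unneth factor kills everything), and hence no manipulation of this family can reproduce $\ind D(\mF)\in K_G^1(B)$. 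The suspension encoded by (\ref{d124}) and (\ref{d131}) is a suspension of the \emph{base}: from the odd family over $B$ one must manufacture an even family over $B\times S^1$ whose $K_G^0(B\times S^1)$-index restricts trivially to a slice $B\times\{\mathrm{pt}\}$ --- the Atiyah--Singer loop construction --- and only then integrate the resulting even-degree Chern-character form over the base circle. Alternatively, and more in the spirit of the Clifford-module formalism of Section~\ref{s0101}, one can prove (\ref{e01062}) directly for odd $\dim Y$: the Getzler rescaling and Mehler's formula go through unchanged, and passing from $\tr_s$ to $\tr_s^{\mathrm{odd}}$ is a degree shift which the normalisation $\psi_B$ in (\ref{e01059}) was designed to absorb, while the long-time argument has to be phrased for a self-adjoint family with index in $K_G^1$. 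Either route is viable; the one you wrote down is not.

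The analytic heart of the proof is the localised rescaling near $W^g$, which you correctly flag as the hard part without carrying it out. The uniform control of the rescaled kernel in the normal directions to $W^g$, the identification of the top Clifford coefficient with $\sigma_{\ell_1}(g^{\mE})$, and the assembly of the transverse determinant factors into $\widehat{\mathrm{A}}_g(TY,\nabla^{TY})$ are precisely what a complete proof must supply. None of this is new relative to \cite[Chapter 6]{MR2273508} beyond the family parameter and the twisting, so deferring to that reference is acceptable for a sketch, but you should say so explicitly rather than only gesturing at ``bookkeeping''.
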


\begin{defn}\label{d179}\cite[Definition 2.10]{Liu2016}
	A perturbation operator with respect to $D(\mF)$, denoted by $\mA$, is 
	defined to be a smooth family of $G$-equivariant bounded self-adjoint 
	pseudodifferential 
	operators on $\mE$ along the fibres such that 
	it commutes (resp. anti-commutes) with the $\Z_2$-grading of 
	$\mE$ when the fibres are odd (resp. even) dimensional, and
	$D(\mF)+\mA$ is invertible.
\end{defn}

Remark that from \cite[Proposition 2.3]{Liu2016}, 
	if $B$ is compact and at least one component of the fibres has the 
	non-zero dimension, then there exists a perturbation operator with 
	respect to $D(\mF)$ if and only if $\ind(D(\mF))=0\in K_G^*(B)$. 

In the followings, we always assume that
	there exists a perturbation operator with respect to $D(\mF)$ on $\mF$.

For $\alpha\in \Lambda(T^*(\R\times B))$, we can expand $\alpha$ in the 
form
\begin{align}\label{e01003}
\alpha(u)=du\wedge \alpha_0(u)+\alpha_1(u),\quad \alpha_0(u), \alpha_1(u)\in 
\Lambda (T^*B).
\end{align}
Set
\begin{align}\label{e01005}
[\alpha(u)]^{du}:=\alpha_0(u).
\end{align}

Let $\chi\in \cC_0^{\infty}(\R)$ be a cut-off function such that
\begin{align}\label{d013}
\chi(u)=
\left\{
\begin{aligned}
&0,  &\hbox{if $u\leq 1$;} \\
&1,  &\hbox{if $u\geq 2$.}
\end{aligned}
\right.
\end{align}

Let $\mA$ be a perturbation operator with respect to $D(\mF)$. Then $\mA$ 
could be extended to $1\widehat{\otimes}\mA$ on 
$\cC^{\infty}(B,\pi^*\Lambda(T^*B)\widehat{\otimes}\mE)$ as in 
(\ref{bl0992}).	
Explicitly, the extended perturbation operator $1\widehat{\otimes}\mA$ which 
acts along 
the fibres $Y$ on 
$\cC^{\infty}(B,\pi^*\Lambda(T^*B)\widehat{\otimes}\mE)$ is considered as 
$\tau^{\Lambda}\otimes\mA$
on $\cC^{\infty}(B,\pi^*\Lambda(T^*B)\otimes\mE)$ and 
$\alpha\widehat{\otimes} 1\mapsto \alpha\otimes 1$.
Then as in (\ref{bl0993}), we have
\begin{align}\label{bl0958} 
(\alpha\widehat{\otimes} 1)(1\widehat{\otimes} \mA)=-(1\widehat{\otimes} 
\mA)(\alpha\widehat{\otimes} 1).
\end{align}
We usually abbreviate $1\widehat{\otimes}\mA$ by $\mA$ when there is no 
confusion. 
Set
\begin{align}\label{d070}
\mathbb{B}_u'=\mathbb{B}_u+\sqrt{u}\chi(\sqrt{u})\mA.
\end{align}

\begin{defn}\label{d017}\cite[Definition 2.11]{Liu2016} 
	For any $g\in G$, modulo exact forms, the equivariant Bismut-Cheeger eta 
	form with perturbation operator $\mA$ is defined by
	\begin{multline}\label{i17}
	\tilde{\eta}_g(\mF, \mA)
	:=
	-\int_0^{\infty}\left\{\psi_{\R\times 
	B}\left.\widetilde{\tr}\right.\left[g\exp\left(-\left(
	\mathbb{B}_{u}'+du\wedge\frac{\partial}{\partial 
	u}\right)^2\right)\right]\right\}^{du}du
	\\	\in \Omega^*(B,\C)/\Im\, d.
	\end{multline}
As in (\ref{bl0993}) and (\ref{bl0958}), we adopt the convention that $du$ 
anti-commutes with $\mA$ and $c(v)$ for any $v\in TY$. 	
\end{defn}

From the discussion in \cite[Section 2.3]{Liu2016},
the equivariant eta form with perturbation in Definition \ref{d017} is well 
defined and does not depend on the cut-off function.
Moreover, since we assume that $Y^g$ is oriented,
we have (cf. \cite[(2.44)]{Liu2016})
\begin{align}\label{e01085}
d^B\tilde{\eta}_g(\mF,\mA)=
\int_{Y^g}\widehat{\mathrm{A}}_g(TY,\nabla^{TY})\,
\ch_g(\mE/\mS,\nabla^{\mE}).
\end{align}

\begin{rem}\label{e01067}
	After changing the variable, we have
	\begin{align}\label{e01070}
	\tilde{\eta}_g(\mF,\mA)=-\int_0^{\infty}\left\{\psi_{\R\times 
	B}\left.\widetilde{\tr}\right.\left[g\exp\left(-\left(
	\mathbb{B}_{u^2}'+du\wedge\frac{\partial}{\partial 
	u}\right)^2\right)\right]\right\}^{du}du.
	\end{align}
	We will often use this formula as the definition of the equivariant eta 
	form.
\end{rem}

	Explicitly, 
	\begin{align}\label{i19}
	\tilde{\eta}_g(\mF, \mA)=
	\left\{
	\begin{aligned}
	& 
	\int_0^{\infty}\left.\frac{1}{\sqrt{\pi}}\psi_{B}\tr_s^{\mathrm{even}}\right.\left[g\left.\frac{\partial
	 \mathbb{B}_{u^2}'}{\partial u}\right.
	\exp(-(\mathbb{B}_{u^2}')^{2})\right] du 
	\\
	& \quad\quad\quad\quad\quad\quad\quad\quad\quad\in 
	\Omega^{\mathrm{even}}(B,\C)/\Im\, d,
	\hbox{if $\mF$ is odd;} \\
	&\int_0^{\infty} \left.\frac{1}{2 
	\sqrt{\pi}\sqrt{-1}}\psi_{B}\tr_s\right.\left[g\left.\frac{\partial 
	\mathbb{B}_{u^2}'}{\partial u}\right.
	\exp(-(\mathbb{B}_{u^2}')^{2})\right] du	
	\\
	& \quad\quad\quad\quad\quad\quad\quad\quad\quad\in 
	\Omega^{\mathrm{odd}}(B,\C)/\Im\, d,
	\hbox{if $\mF$ is even.} \\
	\end{aligned}
	\right.
	\end{align}
	From \cite[Remark 2.20]{Liu2016},
	when $B$ is a point, $\dim Y$ is odd, letting $\mA=P_{\ker D(\mF_Y)}$ be 
	the orthogonal projection onto the kernel of $D(\mF_Y)$, the equivariant 
	eta form $\tilde{\eta}_g(\mF, \mA)$ is just the equivariant reduced eta 
	invariant defined in \cite{MR511246}.
	Note that from (\ref{i19}), if $B$ is a point and $\dim Y$ is even, we 
	have
	$\tilde{\eta}_g(\mF, \mA)=0$ for any perturbation operator $\mA$.

	Let $\mF=(W, \mE, T_{}^HW, g^{TY}, h^{\mE}, \nabla^{\mE})$ and $\mF'=(W, 
	\mE, T_{}^{'H}W, g^{'TY}, h^{'\mE}, \nabla^{'\mE})$ be two equivariant 
	geometric families over $B$.
Let $$\left(\widetilde{\widehat{\mathrm{A}}}_g\cdot
\widetilde{\ch_g}\right)(\nabla^{TY}, \nabla^{'TY}, \nabla^{\mE}, 
\nabla^{'\mE})\in \Omega^*(W^g,\C)/\Im d$$ be the Chern-Simons 
form (cf. \cite[Appendix B]{MR2339952}) such that
\begin{multline}\label{e01110}
d \left(\widetilde{\widehat{\mathrm{A}}}_g\cdot
\widetilde{\ch_g}\right)(\nabla^{TY}, \nabla^{'TY}, \nabla^{\mE}, 
\nabla^{'\mE})
\\
= \widehat{\mathrm{A}}_g(TY,\nabla^{'TY})\,
\ch_g(\mE/\mS, \nabla^{'\mE})-\widehat{\mathrm{A}}_g(TY,\nabla^{TY})\,
\ch_g(\mE/\mS, \nabla^{\mE}).
\end{multline}

When $B$ is compact,
let
$\mathrm{sf}_G\{(D(\mF')+\mA', P'), (D(\mF)+\mA, P)\}\in K_G^*(B)
$, which we often simply denote by $\mathrm{sf}_G\{D(\mF')+\mA', 
D(\mF)+\mA\}$, be the equivariant Dai-Zhang higher spectral flow defined in 
\cite[Definition 2.5, 2.6]{Liu2016},  where $P$, $P'$ are the orthonormal 
projections onto the eigenspaces of positive eigenvalues with respect to 
$D(\mF)+\mA$, $D(\mF')+\mA'$ respectively.
If $B$ is a point and $\dim Y$ is odd, it is just the canonical equivariant 
spectral flow.

 The following anomaly formula is proved in \cite[Theorem 2.17]{Liu2016} and 
 \cite[Theorem 2.7]{MR3626562}.

\begin{thm}\label{e01105}
	Let $\mA, \mA'$ be perturbation operators with respect to $D(\mF)$, 
	$D(\mF')$ respectively.
	For any $g\in G$, modulo exact forms, we have
	
	(a) if $B$ is compact, then
	\begin{multline}\label{e01106}
	\tilde{\eta}_g(\mF', \mA')-\tilde{\eta}_g(\mF,\mA)
	=\int_{Y^g}\left(\widetilde{\widehat{\mathrm{A}}}_g\cdot
	\widetilde{\ch_g}\right)\left(\nabla^{TY}, \nabla^{'TY}, \nabla^{\mE}, 
	\nabla^{'\mE}\right)
	\\
	+\ch_g(\mathrm{sf}_G\{D(\mF')+\mA', D(\mF)+\mA\});
	\end{multline}
	
	(b) if $B$ is noncompact and there exists a smooth path  
	$(\mF_s,\mA_s)$, $s\in [0,1]$, connecting $(\mF,\mA)$ and $(\mF',\mA')$ 
	such that for any $s\in [0,1]$, $\mA_s$ is the perturbation operator of 
	$D(\mF_s)$, then
	\begin{align}\label{bl0951}
	\tilde{\eta}_g(\mF', \mA')-\tilde{\eta}_g(\mF,\mA)
	=\int_{Y^g}\left(\widetilde{\widehat{\mathrm{A}}}_g\cdot
	\widetilde{\ch_g}\right)\left(\nabla^{TY}, \nabla^{'TY}, \nabla^{\mE}, 
	\nabla^{'\mE}\right).
	\end{align}
	
\end{thm}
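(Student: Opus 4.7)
The plan is to derive both parts from a single transgression identity on an enlarged parameter space, first proving (b) directly and then reducing (a) to (b) plus an explicit spectral-flow computation.

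For part (b), given the smooth path $(\mF_s, \mA_s)$, I would work on $[0,1]_s \times B$ and assemble an extended equivariant geometric family $\widetilde{\mF}$ whose restriction to $\{s\} \times B$ is $\mF_s$, together with an extended perturbation $\widetilde{\mA}$ restricting to $\mA_s$. On $\R_u \times [0,1]_s \times B$ the enlarged superconnection $\widetilde{\mathbb{B}}_{u^2}' + du\wedge \partial_u + ds\wedge \partial_s$ has, by the Bianchi identity, closed supertrace of its heat kernel. Extracting the $du \wedge ds$-component and integrating in $u$ over $[0,\infty)$ produces, after using the $u \to 0$ asymptotics from Theorem \ref{e01061} to identify the leading term with a transgression of $\widehat{\mathrm{A}}_g\cdot \ch_g$, an identity on $[0,1]\times B$. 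The $u\to \infty$ boundary vanishes because the invertibility of $D(\mF_s) + \mA_s$ holds uniformly in $s$ along the compact path, yielding exponential decay of the heat kernel. Integrating in $s$ from $0$ to $1$ then assembles the eta-form side into $\til{\eta}_g(\mF',\mA') - \til{\eta}_g(\mF,\mA)$ and the characteristic-class side into $\int_{Y^g}(\widetilde{\widehat{\mathrm{A}}}_g \cdot \widetilde{\ch_g})(\nabla^{TY}, \nabla^{'TY}, \nabla^{\mE}, \nabla^{'\mE})$, which is (\ref{bl0951}).

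For part (a), with $B$ compact, I would construct a piecewise-smooth path from $(\mF,\mA)$ to $(\mF',\mA')$ in two stages: first convexly interpolate the geometric data $(T^HW, g^{TY}, h^\mE, \nabla^\mE)$ from $\mF$ to $\mF'$ while adjusting the perturbation to remain invertible (using a sufficiently generic perturbation operator), and then, with the geometric family fixed at $\mF'$, connect the resulting perturbation back to $\mA'$ by a straight path of self-adjoint operators. The second stage is in general \emph{not} admissible for (b), because $D(\mF') + \mA_t$ may fail to be invertible at a discrete set of parameter values. By the very definition of the equivariant Dai-Zhang higher spectral flow in \cite{Liu2016} and the equivariant odd Chern character construction (\ref{d124})--(\ref{d131}), the sum of the jumps of the eta form at these crossings equals $\ch_g(\mathrm{sf}_G\{D(\mF')+\mA', D(\mF)+\mA\})$. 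On each open subinterval where the family is invertible, (b) applies to give a Chern-Simons contribution, and the telescoping sum of these pieces together with the jump contributions assembles into the right-hand side of (\ref{e01106}).

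The main obstacle is to verify rigorously, in the equivariant family setting, that the jump of the eta form across a non-invertible parameter value coincides with the contribution of the associated $K_G^1$-class to the equivariant odd Chern character, i.e.\ the local spectral-flow identification. This is executed in \cite{Liu2016,MR3626562} via a finite-dimensional reduction to the low-lying eigenspaces of $D(\mF'_s)+\mA_s$ near each crossing, followed by an explicit heat-kernel computation on a virtual bundle over a small circle via (\ref{d131}). Controlling the $u\to\infty$ limit uniformly in $s$ is a secondary technical point, but is available on each invertible subinterval by standard spectral gap estimates for equivariant elliptic families.
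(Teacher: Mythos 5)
The paper does not prove this theorem itself: it cites \cite[Theorem 2.17]{Liu2016} and \cite[Theorem 2.7]{MR3626562}, so the comparison is against the approach of those references. Your sketch of part (b) is correct and is indeed the standard double-transgression: build $\widetilde{\mF}$ over $[0,1]_s\times B$, extract the $du\wedge ds$-component of the closed supertrace, use the local index asymptotics as $u\to 0$ and the uniform spectral gap (from invertibility of $D(\mF_s)+\mA_s$ along the compact path) as $u\to\infty$, then integrate in $s$ to obtain (\ref{bl0951}).

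The weak point is part (a), where your picture of the spectral-flow term is essentially the one-dimensional (point-base) picture. You describe $D(\mF')+\mA_t$ as failing to be invertible ``at a discrete set of parameter values'' with the eta form ``jumping'' at these crossings. This is accurate when $B$ is a point, where the classical equivariant spectral flow is a $\Z$-valued crossing count. When $\dim B>0$, the set of $t\in[0,1]$ at which the family $D(\mF')+\mA_t$ fails to be invertible \emph{over all of $B$} need not be discrete, and the eta form $\tilde{\eta}_g(\mF',\mA_t)$ is simply undefined there, so there is no continuous quantity whose ``jumps'' one can sum. The Dai-Zhang higher spectral flow $\mathrm{sf}_G\{(D(\mF')+\mA',P'),(D(\mF)+\mA,P)\}\in K_G^*(B)$ is defined directly from the two APS-type projections $P$, $P'$ at the endpoints (via a difference of spectral sections), not as an accumulated sum over crossings; the proof in \cite{Liu2016} identifies the closed form measuring the discrepancy $d^B\tilde{\eta}_g$-transgression against this $K$-theory class, without requiring discreteness of any crossing locus. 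Your mention of the finite-dimensional reduction to low-lying eigenspaces and the suspension over $S^1$ via (\ref{d131}) is the right mechanism, but it should be attached to the spectral-section framework rather than to a decomposition into invertible subintervals. A secondary, smaller gap: the assertion that stage one (interpolating the geometric data while ``adjusting the perturbation to remain invertible'') can be carried out smoothly is nontrivial and should be justified; it follows from the homotopy invariance of $\ind(D(\mF_s))\in K_G^*(B)$ on the compact base together with path-connectedness of the space of perturbation operators (cf.\ \cite[Proposition 2.3]{Liu2016}), so that stage one contributes zero spectral flow and the entire spectral-flow term can be localized in stage two.
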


	\subsection{Functoriality}\label{s0104}
	
	Let $\pi_M:U\rightarrow W$ be a $G$-equivariant submersion of smooth  
	manifolds with closed oriented fibres $M$.
	Let $(\mE_M,h^{\mE_M})$ be a $\Z_2$-graded self-adjoint equivariant 
	$C(TM)$-module.
	Let
	\begin{align}\label{d155}
	\mF_M=(U, \mE_M, T_{\pi_M}^HU, g^{TM}, h^{\mE_M}, \nabla^{\mE_M})
	\end{align}
	be a $G$-equivariant geometric family over $W$. Then $\pi_Z:=\pi\circ 
	\pi_M: U\rightarrow B$ is a $G$-equivariant submersion with closed 
	 fibres $Z$, the orientation of which is the composition of the 
	 orientations of $Y$ and $M$.
	Then we have the diagram of submersions:
	
	\begin{center}\label{e02001}
		\begin{tikzpicture}[>=angle 90]
		\matrix(a)[matrix of math nodes,
		row sep=2em, column sep=2.5em,
		text height=1.5ex, text depth=0.25ex]
		{M&Z&U\\
			&Y&W&B.\\};
		\path[->](a-1-1) edge (a-1-2);
		\path[->](a-1-2) edge node[left]{\footnotesize{}} (a-2-2);
		\path[->](a-1-2) edge (a-1-3);
		\path[->](a-2-2) edge (a-2-3);
		\path[->](a-1-3) edge node[left]{\footnotesize{$\pi_M$}} (a-2-3);
		\path[->](a-2-3) edge node[above]{\footnotesize{$\pi$}} (a-2-4);
		\path[->](a-1-3) edge node[above]{\footnotesize{$\pi_Z$}} (a-2-4);
		\end{tikzpicture}
	\end{center}
	
	Set $T_{\pi_M}^HZ:=T_{\pi_M}^HU\cap TZ$. Then we have the splitting of 
	smooth vector bundles over $U$,
	\begin{align}\label{e02003}
	TZ=T_{\pi_M}^HZ\oplus TM,
	\end{align}
	and
	\begin{align}\label{e02002}
	T_{\pi_M}^HZ\cong \pi_M^*TY.
	\end{align}
	
	Take the geometric data 
	$(T_{\pi_Z}^HU, g_T^{TZ})$ of $\pi_Z$ such that $T_{\pi_Z}^HU\subset 
	T_{\pi_M}^HU$, 
	\begin{align}\label{bl1050}
	g_T^{TZ}=\pi_M^*g^{TY}\oplus \frac{1}{T^2}g^{TM}
	\end{align}
	 and $g^{TZ}=g_1^{TZ}$. 
	We denote the Clifford algebra bundle  with respect to $g_T^{TZ}$ by 
	$C_T(TZ)$
	and the corresponding  1-form in (\ref{e01017}) by $S_{T}$.

	Let $\{e_i \}$, $\{f_p\}$ be local orthonormal frames of $TM$, $TY$ with 
	respect to $g^{TM}$, $g^{TY}$ respectively. Now $\{Te_i\}$ is a local 
	orthonormal frame of $TM$ with respect to the rescaled metric 
	$T^{-2}g^{TM}$. Let $f_p^H$ be the horizontal lift of $f_p$ with respect 
	to (\ref{e02003}).
	Now we define a Clifford algebra homomorphism
	\begin{align}\label{e201}
	G_T:(C_T(TZ),g_T^{TZ})\rightarrow (C(TZ),g^{TZ})
	\end{align}
	by
	$
	G_T(c_T(f_{p}^H))=c(f_{p}^H)$ and $G_T(c_T(Te_i))=c(e_i).
	$
	Under this homomorphism,
	\begin{align}\label{d156}
	\mE_Z:=\pi_M^*\mE_Y\widehat{\otimes} \mE_M
	\end{align}
	with induced Hermitian metric $h^{\mE_Z}$ is a $\Z_2$-graded 
	self-adjoint equivariant $C_T(TZ)$-module. 
	 .

Let 
\begin{align}\label{bl1015} 
\,^0\nabla^{\mE_Z}:=\pi_M^*\nabla^{\mE_Y}\otimes 1+1\otimes \nabla^{\mE_M}.
\end{align}
Then it is a Clifford connection on $\mE_Z$ associated with
\begin{align}
	\nabla^{TY,TM}:=\pi_M^*\nabla^{TY}\otimes 1+1\otimes \nabla^{TM}.
\end{align}	
	Now, we denote the Levi-Civita connection on $TZ$ with respect to 
	$g_{T}^{TZ}$ by $\nabla_T^{TZ}$.
Then we could calculate that
	\begin{multline}\label{e197}
	\nabla^{\mathcal{E}_Z}_T:=\,^0\nabla^{\mE_Z}+\frac{1}{2}\la S_{T} 
	Te_i,f_{p}^H\ra_Tc_T(Te_i)c(f_{p}^H)
	\\
	+\frac{1}{4}\la S_{T}f_{p}^H,f_{q}^H\ra_Tc(f_{p}^H)c(f_{q}^H)
	\end{multline}
	is a Clifford connection associated with $\nabla_T^{TZ}$, where 
	$\la\cdot,\cdot\ra_T=g_T^{TZ}(\cdot,\cdot)$ (cf. 
	\cite[(4.3)]{MR3626562}).
	Thus we get a rescaled equivariant geometric family
	\begin{align}\label{d073}
	\mF_{Z,T}:=(U, \mE_Z, T_{\pi_Z}^HU, g_T^{TZ}, h^{\mE_Z}, 
	\nabla_T^{\mE_Z})
	\end{align}
	over $B$. We write $\mF_Z=\mF_{Z,1}$.  
	
	Let $\mA_M$ be a perturbation 
	operator with respect to  $D(\mF_M)$. 
	Then $\mA_M$ could be extended to $1\widehat{\otimes}\mA_M$ on 
	$\cC^{\infty}(U, 
	\pi_Z^*\Lambda(T^*B)\widehat{\otimes}\pi_M^*\mE_Y
	\widehat{\otimes}\mE_M)$ in the same way as $D(\mF_M)$.
	
	Explicitly, if $\dim Y$ is even, the extended perturbation operator 
	$1\widehat{\otimes}\mA_M$ which 
	acts along the fibres $M$ on $\cC^{\infty}(U, 
	\pi_Z^*\Lambda(T^*B)\widehat{\otimes}\pi_M^*\mE_Y
	\widehat{\otimes}\mE_M)$
	 is considered as $\tau^{\Lambda}\otimes \tau\otimes \mA_M$ on 
	$\cC^{\infty}(U, \pi_Z^*\Lambda(T^*B)\otimes \pi_M^*\mE_Y\otimes\mE_M)$ 
	and $\alpha\widehat{\otimes}1\widehat{\otimes}1\mapsto \alpha\otimes 
	1\otimes 1,\quad 1\widehat{\otimes} c(f_p)\widehat{\otimes}1\mapsto 
	\tau^{\Lambda}\otimes c(f_p)\otimes 1.$
	
	If $\dim Y$ is odd and $\dim M$ is even, 
	$1\widehat{\otimes}\mA_M$ 
	 is considered as $\tau^{\Lambda}\otimes 1\otimes \mA_M$ 
		and 
	$\alpha\widehat{\otimes}1\widehat{\otimes}1\mapsto \alpha\otimes 
	1\otimes 1,\quad 1\widehat{\otimes} c(f_p)\widehat{\otimes}1\mapsto 
	\tau^{\Lambda}\otimes c(f_p)\otimes \tau.$
	
	If $\dim Y$ and $\dim M$ are odd, 
	$1\widehat{\otimes}\mA_M$ 
		 is considered as $\tau^{\Lambda}\otimes 1\otimes \mA_M\otimes 
	 \Gamma_2$ 
	 on 
	 $\cC^{\infty}(U, \pi_Z^*\Lambda(T^*B)\otimes 
	 \pi_M^*\mE_Y\otimes\mE_M\otimes \C^2)$
		and 
	$\alpha\widehat{\otimes}1\widehat{\otimes}1\mapsto \alpha\otimes 
	1\otimes 1\otimes \Id,\quad1\widehat{\otimes} 
	c(f_p)\widehat{\otimes}1\mapsto \tau^{\Lambda}\otimes c(f_p)\otimes 
	1\otimes \Gamma_1.$
	
	We abbreiate $\alpha\widehat{\otimes}1\widehat{\otimes}1$, 
	$1\widehat{\otimes} c(f_p)\widehat{\otimes}1$ by  $\alpha$, $c(f_p)$. 
	Then
	\begin{align}\label{bl0995} 
	\alpha\cdot 1\widehat{\otimes}\mA_M =-1\widehat{\otimes}\mA_M\cdot 
	\alpha,\quad c(f_p)\cdot 
	1\widehat{\otimes}\mA_M=-1\widehat{\otimes}\mA_M\cdot 
	c(f_p).
	\end{align}

	 In \cite[Lemma 2.15]{Liu2016}, we prove that for any compact 
	 submanifold 
	 $K$ of $B$, there exists $T_0>0$ such that
	for $T\geq T_0$, $1\widehat{\otimes}T\mA_M$ is a perturbation operator 
	with respect to  $D(\mF_{Z,T})$ over $K$.

    The following theorem is the Clifford module version of \cite[Lemma 
    2.16]{Liu2016}, which is related to \cite[Theorem 3.1]{MR1942300}, 
    \cite[Theorem 5.11]{MR2072502} and 
    \cite[Theorem 3.4]{MR3626562}.
        
	\begin{thm}\label{d188}
		For any compact submanifold $K$ of $B$,
		there exists $T_0>0$ such that
		 for $T\geq T_0$, modulo exact forms, over $K$, we have
		\begin{multline}\label{d189}
		\widetilde{\eta}_g(\mF_{Z,T},  
		1\widehat{\otimes}T\mA_M)=\int_{Y^g}\widehat{\mathrm{A}}_g(TY,\nabla^{TY})
		 \ch_g(\mE_Y/\mS, \nabla^{\mE_Y})\, 
		\widetilde{\eta}_g(\mF_{M},\mA_M)
		\\
		-\int_{Z^g}\left(\widetilde{\widehat{\mathrm{A}}}_g\cdot
		\widetilde{\ch_g}\right)\left(\nabla_T^{TZ}, \nabla^{TY,TM}, 
		\nabla_T^{\mE_Z}, \,^0\nabla^{\mE_Z}\right).
		\end{multline}

	\end{thm}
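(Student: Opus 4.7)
The plan is to adapt the adiabatic-limit argument of Bismut--Cheeger, as carried out in the spin case in \cite[Lemma 2.16]{Liu2016}, to the present Clifford-module setting. Heuristically, as $T\to\infty$, the relative metric $g_T^{TZ}=\pi_M^*g^{TY}\oplus T^{-2}g^{TM}$ collapses the $M$-direction, so that the fibration $\pi_Z:U\to B$ degenerates into an iterated fibration with tiny fibres $M$, and the rescaled Bismut superconnection of $\mF_{Z,T}$ perturbed by $1\widehat{\otimes}T\mA_M$ factorises to leading order into a horizontal piece built from $\mF_Y$ coupled with a vertical piece built from $D(\mF_M)+\mA_M$.

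Concretely, I would first replace the Levi--Civita data $\nabla_T^{TZ}$ and $\nabla_T^{\mE_Z}$ by the product data $\nabla^{TY,TM}=\pi_M^*\nabla^{TY}\otimes 1+1\otimes\nabla^{TM}$ and $\,^0\nabla^{\mE_Z}=\pi_M^*\nabla^{\mE_Y}\otimes 1+1\otimes\nabla^{\mE_M}$; the resulting geometric family is the natural one for which the leading-order factorisation holds cleanly. The discrepancy is absorbed by the anomaly formula of Theorem \ref{e01105}(b), applied along a smooth path of connections interpolating the two choices while keeping $1\widehat{\otimes}T\mA_M$ admissible as a perturbation (the latter being guaranteed for $T\geq T_0$ over the compact submanifold $K$ by \cite[Lemma 2.15]{Liu2016}). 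This produces exactly the Chern--Simons correction $-\int_{Z^g}(\widetilde{\widehat{\mathrm{A}}}_g\cdot\widetilde{\ch_g})(\nabla_T^{TZ},\nabla^{TY,TM},\nabla_T^{\mE_Z},\,^0\nabla^{\mE_Z})$ appearing on the right-hand side of \eqref{d189}.

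With this reduction in place, the core computation is to analyse $\lim_{T\to\infty}\bigl[\psi_{\R\times B}\widetilde{\tr}[g\exp(-(\mathbb{B}_{u^2,T}'+du\wedge\partial_u)^2)]\bigr]^{du}$ in a way that is uniform on $u$-intervals. Following the strategy of \cite[Theorem 3.1]{MR1942300} and \cite[Theorem 3.4]{MR3626562}, one splits $(0,\infty)$ into three regions --- small $u$ (Getzler rescaling), intermediate $u$ (pointwise convergence of the rescaled superconnection), and large $u$ (spectral-gap decay) --- and in each region shows that the fibrewise heat kernel along $Z$ splits asymptotically as a heat kernel along $Y$ tensored with the $M$-fibrewise heat kernel for $D(\mF_M)+\mA_M$. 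Integrating in $u$ then reproduces the term $\int_{Y^g}\widehat{\mathrm{A}}_g(TY,\nabla^{TY})\,\ch_g(\mE_Y/\mS,\nabla^{\mE_Y})\,\tilde{\eta}_g(\mF_M,\mA_M)$.

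The main obstacle is twofold. First, the Clifford-module isomorphisms (\ref{bl0988})--(\ref{bl0991}) and the extension rules for $\mA_M$ branch into three parity cases ($\dim Y$ even; $\dim Y$ odd with $\dim M$ even; both $\dim Y$ and $\dim M$ odd), and in each case one must verify that the sign conventions entering $\widetilde{\tr}$ and the normalisation $\psi_B$ are compatible with the factorised heat kernel --- this is where the proof genuinely departs from the spin case and must be checked case by case. Second, the large-$u$ tail requires a spectral gap for $D(\mF_{Z,T})+T\mA_M$ uniform in $T\geq T_0$ over $K$; this is precisely what \cite[Lemma 2.15]{Liu2016} supplies, and once it is invoked the remaining heat-kernel estimates of the spin proof of \cite[Lemma 2.16]{Liu2016} transfer verbatim, thanks to the locality of all relevant Clifford-module constructions.
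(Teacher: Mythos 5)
The paper itself gives no explicit proof of Theorem \ref{d188}: it calls it ``the Clifford module version of \cite[Lemma 2.16]{Liu2016},'' cites \cite[Theorem 3.1]{MR1942300}, \cite[Theorem 5.11]{MR2072502} and \cite[Theorem 3.4]{MR3626562}, and relies on the blanket remark at the start of Section \ref{s01} that locally all manifolds are spin so the spin-case proofs carry over. Your overall scaffold -- adiabatic collapse of the $M$-fibres, asymptotic factorisation of the fibrewise heat kernel in three $u$-regimes, Chern--Simons correction for the connection discrepancy, parity bookkeeping in the three Clifford branches, spectral-gap uniformity from \cite[Lemma 2.15]{Liu2016}, and the final appeal to locality -- matches that literature and is essentially the intended argument.

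However, your first reduction contains a genuine gap. You propose to pass from $(\nabla_T^{TZ},\nabla_T^{\mE_Z})$ to the product data $(\nabla^{TY,TM},\,^0\nabla^{\mE_Z})$ on the grounds that ``the resulting geometric family is the natural one,'' and then absorb the discrepancy via Theorem \ref{e01105}(b). But $\nabla^{TY,TM}=\pi_M^*\nabla^{TY}\otimes 1+1\otimes\nabla^{TM}$ is in general \emph{not} the Levi--Civita-induced connection $P^{TZ}\nabla^{TU}P^{TZ}$ associated with any pair $(T_{\pi_Z}^HU,g^{TZ})$: the direct-sum connection misses the second-fundamental-form terms coupling $T_{\pi_M}^HZ$ and $TM$, and those terms vanish only in the adiabatic limit $T\to\infty$. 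Under Definition \ref{d001} the connection on $TZ$ is not a free parameter; only the Clifford connection $\nabla^{\mE_Z}$ may be chosen, and it must be compatible with the fixed $\nabla^{TZ}$. Hence no geometric family has $(\nabla^{TY,TM},\,^0\nabla^{\mE_Z})$ as its data, and the anomaly formula -- a statement about eta forms of genuine geometric families -- cannot be applied to produce the Chern--Simons term this way. (The Chern--Simons form itself is of course defined for arbitrary connection pairs via (\ref{e01110}); what fails is the eta-form anomaly identity at the ``product'' endpoint.) The correct route, used in the references you cite, is to apply the anomaly formula only between bona fide families $\mF_{Z,T_1}$ and $\mF_{Z,T_2}$ with $T_0\le T_1<T_2$ (so the spectral flow along the path vanishes), use the cocycle property of Chern--Simons classes to re-anchor at $(\nabla^{TY,TM},\,^0\nabla^{\mE_Z})$, and then show simultaneously that this re-anchored Chern--Simons form tends to $0$ and that $\tilde{\eta}_g(\mF_{Z,T},1\widehat{\otimes}T\mA_M)$ converges to the factorised product as $T\to\infty$; equivalently, run a $(u,T)$-contour transgression of the superconnection traces, with the product-connection superconnection entering only as an auxiliary comparison object and not as the superconnection of a geometric family.
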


\section{Embedding of equivariant eta forms}\label{s02}

In this section, we state our main result and the application in 
equivariant Atiyah-Hirzebruch direct image. In Section 2.1, we describe the 
geometry of the embedding of submersions. In Section 2.2, we explain the 
equivariant family version of the fundamental assumption. In Section 2.3, we 
introduce the equivariant Atiyah-Hirzebruch direct image. 
In 
Section 2.4, we state our main result. 

\subsection{Embedding of submersions}\label{s0201}
In this subsection, we introduce the embedding of submersions, the setting 
of which is the same as \cite[Section 1]{Bismut1997} and \cite{MR2097553}.

Let $i:W\rightarrow V$ be an embedding of smooth oriented manifolds. Let 
$\pi_V:V\rightarrow B$ be a submersion of smooth oriented manifolds with 
closed fibres $X$, whose 
restriction $\pi_W:W\rightarrow B$ is a smooth submersion with closed fibres 
$Y$.

Thus, we have the diagram of maps
\begin{center}\label{bl0347}
	\begin{tikzpicture}[>=angle 90]
	\matrix(a)[matrix of math nodes,
	row sep=2em, column sep=2.5em,
	text height=1.5ex, text depth=0.25ex]
	{&Y&W\\
		&X&V&B.\\};
	\path[->](a-1-2) edge node[left]{\footnotesize{$i$}} (a-2-2);
	\path[->](a-1-2) edge (a-1-3);
	\path[->](a-2-2) edge (a-2-3);
	\path[->](a-1-3) edge node[left]{\footnotesize{$i$}} (a-2-3);
	\path[->](a-2-3) edge node[above]{\footnotesize{$\pi_V$}} (a-2-4);
	\path[->](a-1-3) edge node[above]{\footnotesize{$\pi_W$}} (a-2-4);
	\end{tikzpicture}
\end{center}

 In general, $B$, $V$, $W$ are not connected. We simply assume that $B$ and 
 $V$ are connected.
 For any connected component $W_{\alpha}$ of $W$, we assume that $\dim 
 V-\dim W_{\alpha}$ is even. To simplify the notations, we usually denote 
 the connected component by $W$ when there is no confusion.

Let $TX=TV/B$, $TY=TW/B$ be the relative tangent bundles to the fibres $X$, 
$Y$.
Let $T^HV$ be a smooth subbundle of $TV$ such that
\begin{align}\label{bl0349}
TV=T^HV\oplus TX.
\end{align}
Let $\widetilde{N}_{Y/X}$ be a smooth subbundle of $TX|_W$ such that
\begin{align}\label{bl0350}
TX|_W=TY\oplus \widetilde{N}_{Y/X}.
\end{align}
Let $N_{W/V}$ be the normal bundle to $W$ in $V$, which we usually denote by 
$N_{Y/X}$. Clearly,
\begin{align}\label{bl0351}
T^HV\simeq \pi_V^*TB, \quad \widetilde{N}_{Y/X}\simeq N_{Y/X}.
\end{align}
By (\ref{bl0349}) and (\ref{bl0350}), we get
\begin{align}\label{bl0352}
TV|_W=T^HV|_W\oplus  TY\oplus\widetilde{N}_{Y/X}.
\end{align}
By (\ref{bl0352}), there is a well-defined morphism
\begin{align}\label{bl0353}
\frac{TW}{TY}\rightarrow T^HV|_W\oplus \widetilde{N}_{Y/X}
\end{align}
and this morphism maps $TW/TY$ into a subbundle of $TW$.
Let $T^HW$ be the subbundle of $TW$ which is the image of $TW/TY$ by the 
morphism (\ref{bl0353}). Clearly,
\begin{align}\label{bl0354}
TW=T^HW\oplus TY.
\end{align}
In general, the subbundle $T^HW$ is not equal to $T^HV|_W$.

Let $g^{TV}$ be a metric on $TV$. Let $g^{TW}$ be the induced metric on 
$TW$. Let $g^{TX}$, $g^{TY}$ be the induced metrics on $TX$, $TY$. Note that 
even if $g^{TV}$ is of the type as in (\ref{e01013}), in general, $g^{TW}$ 
is not of this type.

We identity $N_{Y/X}$ with the orthogonal bundle $\widetilde{N}_{Y/X}$ to 
$TY$ in $TX|_W$ with respect to $g^{TX}|_W$. Let $g^{N_{Y/X}}$  be the 
induced metric on $N_{Y/X}$. On $W$, we have
\begin{align}\label{bl0380}
TX|_W=TY\oplus N_{Y/X}.
\end{align}

To the pairs $(T_{\pi_V}^HV,g^{TX})$ and $(T_{\pi_W}^HW, g^{TY} 
)$, we can 
associate the objects that we construct in (\ref{e01014}) and (\ref{e01017}).
In particular, $TX$, $TY$ are now equipped with connections $\nabla^{TX}$, 
$\nabla^{TY}$ which preserve the metrics $g^{TX}$, $g^{TY}$ respectively.

Let $P^{TY}$, $P^{N_{Y/X}}$ be the orthogonal projections $TX|_W\rightarrow 
TY$, $TX|_W\rightarrow N_{Y/X}$. By \cite[Theorem 1.9]{Bismut1997}, we have
\begin{align}\label{bl0367}
\nabla^{TY}=P^{TY}\nabla^{TX|_Y}.
\end{align}

Let 
\begin{align}\label{bl0967} 
\nabla^{N_{Y/X}}=P^{N_{Y/X}}\nabla^{TX}
\end{align}
be the connection on $N_{Y/X}$.
Then $\nabla^{N_{Y/X}}$ preserves the metric $g^{N_{Y/X}}$. Put
\begin{align}\label{bl0371}
\nabla^{TY, N_{Y/X}}=\nabla^{TY}\oplus \nabla^{N_{Y/X}}.
\end{align}
Then $\nabla^{TY,N_{Y/X}}$ is a Euclidean connection on $TX|_W=TY\oplus 
N_{Y/X}$.

Let $G$ be a compact Lie group. We assume that $W$, $V$ and $B$ are 
$G$-manifolds and the $G$-action commutes with the embedding and $\pi_V$.  
Obviously, the group action commutes with $\pi_W$. We assume that $G$ acts 
trivially on $B$. We assume that the group action preserve the splittings 
(\ref{bl0349}) and (\ref{bl0354}) and all metrics and connections are 
$G$-invariant.

Let $W^g$, $V^g$ be the fixed point sets of $W$, $V$ for $g\in G$. Then 
$\pi_W|_{W^g}:W^g\rightarrow B$ and  $\pi_V|_{V^g}:V^g\rightarrow B$ are 
submersions with closed fibres $Y^g$ and $X^g$. 
We assume that $TY^g$ and $TX^g$ are all oriented and the orientations are 
compatible with those of $TY$, $TX$ and the normal bundles as in the 
arguments at the beginning of Section \ref{s0103}.

\begin{rem}\label{bl0672}(cf. \cite[Section 7.5]{Bismut1997})
	Given $G$-equivariant pair $(T_{\pi_W}^HW, g^{TY})$, we could take 
	metrics $g^{TB}$ and $g^{TW}$ on $TB$ and $TW$ such that 
	$g^{TW}=\pi_W^*g^{TB}\oplus g^{TY}$. Let $g^N$ be a $G$-invariant metric 
	on $N_{Y/X}$. Let $\nabla^N$ be a $G$-invariant Euclidean connection on 
	$(N_{Y/X}, g^N)$ and $T^HN$ be the horizontal subbundle associated with 
	the fibration $\pi_N:N_{Y/X}\rightarrow W$ and $\nabla^N$. We take 
	$g^{TN}=\pi_N^*g^{TW}\oplus g^{N}$ for $TN=T^HN\oplus N$. 
	Since $W$ intersects $X$ orthogonally, we could take horizontal 
	subbundle $T_{\pi_V}^HV$ over $V$ such that 
	$T_{\pi_V}^HV|_W=T_{\pi_W}^HW$.
	By using the partition of unity argument, we could construct 
	$G$-invariant metrics $g^{TX}$, $g^{TV}$ on $TX$, $TV$ such that 
	$g^{TV}=\pi_V^*g^{TB}\oplus g^{TX}$ and $W$ is a 
	totally geodesic submanifold of $V$. In this case, for any $b\in B$, the 
	fibre $Y_b$ is a totally geodesic 
	submanifold of $X_b$.
	It means that
	$
	\nabla^{TX|_W}=\nabla^{TY,N_{Y/X}}.
	$
\end{rem}

By Remark \ref{bl0672},
in this paper, we will always assume that the pairs $(T_{\pi_W}^HW, 
g^{TY})$ and $(T_{\pi_V}^HV, g^{TX})$ satisfy the conditions that 
\begin{align}\label{bl0960} 
T_{\pi_V}^HV|_W=T_{\pi_W}^HW,\quad \nabla^{TX|_W}=\nabla^{TY,N_{Y/X}}.
\end{align}

\subsection{Embedding of the geometric families}\label{s0202}
In this subsection, we state our assumptions on the embedding of the 
geometric 
families, which is the equivariant family case of the assumptions in 
\cite[Section 1 b)]{MR1214954}.

Let $\mF_{Y}:=(W, \mE_Y, T_{\pi_W}^HW, g^{TY}, h^{\mE_Y}, \nabla^{\mE_Y})$ 
and $\mF_{X}:=(V, \mE_{X}, T_{\pi_V}^HV, g^{TX}, h^{\mE_{X}}, 
\nabla^{\mE_{X}})$ 
be two equivariant geometric families over $B$ such that the pairs $( 
T_{\pi_W}^HW, 
g^{TY})$ and $(T_{\pi_V}^HV, g^{TX})$ satisfy (\ref{bl0960}).
For simplicity, we assume that $\tau^{\mE_Y/\mS}\equiv 1$ on $\mE_Y$.

\textbf{Assume that $(N_{Y/X},g^{N_{Y/X}})$ has an equivariant Spin$^c$ 
structure}. Then there exists an equivariant complex line bundle $L_N$  
(cf. \cite[Appendix 
D]{MR1031992}) 
such that $w_2(N_{Y/X})=c_1(L_N)\,\mathrm{mod}\,2$, where $w_2$ is the 
second 
Stiefel-Whitney class and $c_1$ is the first Chern class.  Let 
$\mS(N_{Y/X},L_N)$ be the spinor 
bundle for $L_N$ which locally may be written as
\begin{align}\label{local206} 
\mS(N_{Y/X},L_N)=\mS_0(N_{Y/X})\otimes L_N^{1/2},
\end{align}
where $\mS_0(N_{Y/X})$ is the spinor bundle for the (possibly 
non-existent) spin structure on $N_{Y/X}$ and $L_N^{1/2}$ is the (possibly 
non-existent) square root of $L_N$. 
Then the 
$G$-action on $N_{Y/X}$ and $L_N$ lift to $\mS(N_{Y/X},L_N)$.
For simplicity, 
We usually simply denote the spinor bundle by $\mS_N$.

Let $h^L$ be a 
$G$-invariant 
Hermitian metric on $L_N$. Let $\nabla^L$ be a $G$-invariant  Hermitian 
connection on 
($L_N,h^L$).
Let $h^{\mS_N}$ be the equivariant Hermitian metric on 
$\mS_N$ induced by $g^{N_{Y/X}}$ and $h^L$. Let $\nabla^{\mS_N}$ 
be 
the equivariant Hermitian connection on 
$\mS_N$ induced by $\nabla^{N_{Y/X}}$ and $\nabla^L$.
%

From (\ref{bl0927}),
the bundle $\End_{C(TX)}(\mE_X)$ is naturally 
$\Z_2$-graded with respect to $\tau^{\mE_X/S}$.
Let $\mathcal{V}$ be a smooth self-adjoint section of $\End_{C(TX)}(\mE_X)$ 
such that it exchanges this $\Z_2$-grading and 
commutes with the $G$-action.
Then $\mV$ could be extended on 
$\pi_V^*\Lambda(T^*B)\widehat{\otimes}\mE_X$ in the same way as the 
perturbation operator $\mA$ in (\ref{bl0958}).

\textbf{We assume that on $V\backslash W$, $\mathcal{V}$ is invertible, and 
that on 
$W$, $\ker \mathcal{V}$ has locally constant nonzero dimension, so that 
$\ker \mathcal{V}$ is a nonzero smooth $\Z_2$-graded $G$-equivariant vector 
subbundle of $\mE_X|_{W}$.} Let $h^{\ker\mathcal{V}}$ be the metric on 
$\ker\mathcal{V}$ induced by the metric $h^{\mE_X|_W}$. Let 
$P^{\ker\mathcal{V}}$ be the orthogonal projection operator from 
$\mE_X|_{W}$ to $\ker \mathcal{V}$.

For $y\in W$, $U\in T_yX$, let $\partial_U\mathcal{V}(y)$ be the derivative 
of $\mathcal{V}$ with respect to $U$ in any given smooth trivialization of 
$\mE_X$ near $y\in W$. One then verifies that 
$P^{\ker\mathcal{V}}\partial_U\mathcal{V}(y)P^{\ker\mathcal{V}}$ does not 
depend on the trivialization, and only depends on the image $Z$ of $U\in 
T_yX$ in $N_{Y/X}$. From now on, we will write 
$\dot{\partial}_Z(\mathcal{V})(y)$ instead of 
$P^{\ker\mathcal{V}}\partial_U\mathcal{V}(y)P^{\ker\mathcal{V}}$. Then one 
verifies that $\dot{\partial}_Z(\mathcal{V})(y)$ is a self-adjoint element 
of $\End(\ker\mathcal{V})$ and exchanges the $\Z_2$-grading.

If $Z\in N_{Y/X}$, let $\tilde{c}(Z)\in \End(\mS_N^*)$ be the transpose of 
$c(Z)$ acting on $\mS_N$.

Denote by $N_{\C}^*=N_{Y/X}^*\otimes_{\R}\C$.
Since $L_N\otimes L_N^*$ is an equivariant trivial bundle, we have 
$\Lambda(N_{\C}^*)\simeq \mS_N\widehat{\otimes}\mS_N^*$.
We equip $\Lambda(N_{\C}^*)\widehat{\otimes}\mE_Y$ with the induced 
metric $h^{\Lambda(N_{\C}^*)\widehat{\otimes}\mE_Y}$. For $Z\in 
N_{Y/X}$, $\tilde{c}(Z)$ acts on  
$\mS_N\widehat{\otimes}\mS_N^*\widehat{\otimes}\mE_Y$ like 
$1\otimes\tilde{c}(Z)\otimes 1$. 

\textbf{Fundamental assumption}:
Let $\pi_N:N_{Y/X}\rightarrow W$ be the 
projection.
Over the total space $N_{Y/X}$, we have the equivariant identification
\begin{multline}\label{bl1011} 
\left(\pi_N^*\ker\mathcal{V}, \pi_N^*h^{\ker\mathcal{V}}, 
\dot{\partial}_Z(\mathcal{V})(y) \right)
\\
\simeq \left(\pi_N^*(\Lambda(N_{\C}^*)\widehat{\otimes}\mE_{Y}), 
\pi_N^*h^{\Lambda(N_{\C}^*)\widehat{\otimes}\mE_{Y}}, 
\sqrt{-1}\tilde{c}(Z)\right).
\end{multline} 
Let $\nabla^{\ker\mathcal{V}}$ be the equivariant Hermitian connection on 
$\ker\mathcal{V}$,
\begin{align}\label{bl1012} 
\nabla^{\ker\mathcal{V}}=P^{\ker\mathcal{V}}\nabla^{\mE_X|_W}P^{\ker\mathcal{V}}.
\end{align}
We make the assumption that under the identification (\ref{bl1011}), 
\begin{align}\label{bl1013} 
\nabla^{\ker\mathcal{V}}=\nabla^{\Lambda(N_{\C}^*)\widehat{\otimes}\mE_Y}.
\end{align}

\subsection{Atiyah-Hirzebruch direct image}\label{s0203}
In this subsection, we introduce an important example of the embedding of 
equivariant geometric families satisfying the fundamental assumption: the 
equivariant version of the Atiyah-Hirzebruch direct image 
\cite{MR0110106,MR2532714}.
We assume that the base space $B$ is compact and adopt the notations and the 
assumptions in 
Section 
\ref{s0201} in this subsection. 

We further assume that $TY$ and $TX$ have equivariant Spin$^c$ structures.
 Then there exist equivariant complex line bundles $L_Y$ and $L_X$
over $W$ and $V$ such that $w_2(TY)= c_1(L_Y) \mod 2$ and 
$w_2(TX)= c_1(L_X) \mod 2$.
Then from the splitting (\ref{bl0380}), the equivariant vector bundle 
$N_{Y/X}$ over $W$ has an equivariant Spin$^c$ structure with associated 
equivariant line bundle
$
L_N:=L_X\otimes L_Y^{-1}.
$
Let $h^{L_Y}$, $h^{L_X}$ be $G$-invariant Hermitian metrics on $L_Y$, $L_X$ 
and 
$\nabla^{L_Y}$, $\nabla^{L_X}$ be $G$-invariant Hermitian connections on 
$(L_Y,h^{L_Y})$, 
$(L_X,h^{L_X})$.
Let $h^{L_N}$ and $\nabla^{L_N}$ be metric and connection on $L_N$ induced 
by $h^{L_Y}$, $h^{L_X}$ and $\nabla^{L_Y}$, $\nabla^{L_X}$.
Let $\mS(TY,L_Y)$, $\mS(TX, L_X)$ and $\mS(N_{Y/X}, L_N)$ be the spinor 
bundles for $(TY,L_Y)$, ($TX, L_X$) and ($N_{Y/X}, L_N$), which we 
will simply denote by $\mS_Y$, $\mS_X$ and $\mS_N$. Then these spinors are 
$G$-equivariant vector bundles.
Furthermore, $\mS_X|_W=\mS_Y\widehat{\otimes}\mS_N$.
 Since $\dim N_{Y/X}=\dim 
V-\dim W$ is even, the spinor $\mS_N$ is $\Z_2$-graded.

Let $\{W_{\alpha}\}_{\alpha=1,\cdots,k}$ be the connected components of $W$.
Let $(\mu, h^{\mu})$ be a $G$-equivariant Hermitian vector bundle over $W$ 
with a $G$-invariant Hermitian connection $\nabla^{\mu}$.  In the 
followings, we will describe a geometric realization of the 
Atiyah-Hirzebruch direct image $i![\mu]\in \widetilde{K}_G^0(V)$ as in 
\cite{MR0110106,MR2532714}. We denote by $\mu_{\alpha}$ the restriction of 
$\mu$ on $W_{\alpha}$. 	

For any $r>0$, set $N_{\alpha,r}:=\{Z\in N_{Y_{\alpha}/X}: 
|Z|<r\}$. Then there is $\var_0>0$ such that
the map $(y,Z)\in N_{Y_{\alpha}/X}\rightarrow \exp_y^V(Z)$ is a 
diffeomorphism of
$N_{\alpha,2\var_0}$ on an open $G$-equivariant tubular neighbourhood of 
 $W_{\alpha}$ in $V$ for any $\alpha$. Without confusion we will also regard 
 $N_{\alpha,2\var_0}$ as the open $G$-equivariant tubular neighbourhood of 
 $W$ in 
 $V$. We choose $\var_0>0$ small enough such that for any $1\leq \alpha\neq 
 \beta\leq k$, $N_{\alpha,2\var_0}\cap N_{\beta,2\var_0} =\emptyset$.

Let $\pi_{\alpha}:N_{Y_{\alpha}/X}\rightarrow W_{\alpha}$ denote the 
projection of the normal bundle $N_{Y_{\alpha}/X}$ over $W_{\alpha}$. For 
$Z\in N_{Y_{\alpha}/X}$, let $\tilde{c}(Z)\in \End(\mS_{N_{\alpha}}^*)$ be 
the transpose of $c(Z)$ acting on $\mS_{N_{\alpha}}$.  Let 
$\pi_{\alpha}^*(\mS_{N_{\alpha}}^*)$ be the pull back bundle of 
$\mS_{N_{\alpha}}^*$ over $N_{Y_{\alpha}/X}$. For any $Z\in 
N_{Y_{\alpha}/X}$ with $Z\neq 0$, $\tilde{c}(Z): 
\pi_{\alpha}^*(\mS_{N_{\alpha},\pm}^*)|_Z\rightarrow 
\pi_{\alpha}^*(\mS_{N_{\alpha},\mp}^*)|_Z$ is an equivariant isomorphisms at 
$Z$.

From the equivariant Serre-Swan theorem \cite[Proposition 2.4]{MR0234452}, 
there exists a
$G$-equivariant Hermitian vector bundle $(E_{\alpha},h^{E_{\alpha}})$ such 
that $\mS_{N_{\alpha},-}^*\otimes \mu_{\alpha}\oplus E_{\alpha}$ is a 
$G$-equivariant trivial complex vector bundle over $W_{\alpha}$.   Then
\begin{align}\label{bl0034}
\tilde{c}(Z)\oplus 
\pi_{\alpha}^*\mathrm{Id}_{E_{\alpha}}:\pi_{\alpha}^*(\mS_{N_{\alpha},+}^*
\otimes \mu_{\alpha}\oplus E_{\alpha})\rightarrow 
\pi_{\alpha}^*(\mS_{N_{\alpha},-}^*\otimes\mu_{\alpha}\oplus E_{\alpha})
\end{align}
induces a $G$-equivariant isomorphism between two equivariant trivial vector 
bundles over $N_{\alpha,2\var_0}\backslash W_{\alpha}$.

By adding the equivariant trivial bundles, we could assume that for any 
$1\leq \alpha\neq \beta\leq k$, $\dim (\mS_{N_{\alpha},\pm}^*\otimes 
\mu_{\alpha}\oplus E_{\alpha})=\dim (\mS_{N_{\beta},\pm}^*\otimes 
\mu_{\beta}\oplus E_{\beta})$.
Clearly, $\{\pi_{\alpha}^*(\mS_{N_{\alpha},\pm}^*\otimes \mu_{\alpha}\oplus 
E_{\alpha})|_{\partial N_{\alpha,2\var_0}}\}_{\alpha=1,\cdots,k}$ extend 
smoothly to two equivariant trivial complex vector bundles over $V\backslash 
\cup_{1\leq\alpha\leq k}N_{\alpha,2\var_0}$. 

In summary, what we get is a 
$\Z_2$-graded Hermitian vector bundle ($\xi, h^{\xi}$) such that
\begin{align}\label{bl0035}
\begin{split}
&\xi_{\pm}|_{N_{\alpha,\var_0}}=\pi_{\alpha}^*(\mS_{N_{\alpha},\pm}^*\otimes 
\mu_{\alpha}\oplus E_{\alpha})|_{ N_{\alpha,\var_0}},
\\ 
&h^{\xi_{\pm}}|_{N_{\alpha,\var_0}}=\left.\pi_{\alpha}^*\left(h^{\mS_{N_{\alpha},\pm}^*\otimes
 \mu_{\alpha}}\oplus h^{E_{\alpha}}\right)\right|_{N_{\alpha,\var_0}},
\end{split}
\end{align}
where $h^{\mS_{N_{\alpha},\pm}^*\otimes \mu_{\alpha}}$ is the equivariant 
Hermitian metric on $\mS_{N_{\alpha},\pm}^*\otimes \mu_{\alpha}$  induced 
by  $g^{N_{\alpha}}$, $h^{L_{N_{\alpha}}}$ and $h^{\mu_{\alpha}}$.
Let 
$\nabla^{E_{\alpha}}$ be a $G$-invariant Hermitian connection on 
$(E_{\alpha}, h^{E_{\alpha}})$.
We can also get a $G$-invariant $\Z_2$-graded Hermitian connection 
$\nabla^{\xi}=\nabla^{\xi_+}\oplus \nabla^{\xi_-}$ on $\xi=\xi_+\oplus 
\xi_-$ over $V$ such that
\begin{align}\label{bl0037}
\nabla^{\xi_{\pm}}|_{N_{\alpha,\var_0}}=\pi_{\alpha}^*\left(\nabla^{\mS_{N_{\alpha},\pm}^*\otimes
	\mu_{\alpha}}\oplus \nabla^{E_{\alpha}}\right),
\end{align}
where $\nabla^{\mS_{N_{\alpha},\pm}^*\otimes \mu_{\alpha}}$ is the 
equivariant Hermitian connection on $\mS_{N_{\alpha},\pm}^*\otimes 
\mu_{\alpha}$  induced by  $\nabla^{N_{\alpha}}$, $\nabla^{L_{N_{\alpha}}}$ 
and $\nabla^{\mu_{\alpha}}$.

It is easy to see that there exists an equivariant self-adjoint automorphism 
$\mathcal{V}$ of $\mS_X\widehat{\otimes}\xi$, which exchanges the 
$\Z_2$-grading of $\xi$, such that
\begin{align}\label{bl0036}
\mathcal{V}|_{N_{\alpha,\var_0}}=\Id_{\mS_X} \widehat{\otimes} 
\left(\sqrt{-1}\,\tilde{c}(Z)\oplus \pi^*\mathrm{Id}_{E_{\alpha}}\right).
\end{align}

From the construction above, we could see that $\mathcal{V}$ is invertible 
on $V\backslash W$ and
\begin{align}\label{bl0038}
(\ker \mathcal{V})|_{W}=\mS_X|_{W}\widehat{\otimes} \mS^*_N\otimes 
\mu=\mS_Y\widehat{\otimes}\mS_N\widehat{\otimes}\mS^*_N\otimes 
\mu=\mS_Y\widehat{\otimes} \Lambda(N_{\C}^*)\otimes \mu
\end{align}
is an equivariant vector bundle over $W$. Let $P^{\ker \mathcal{V}}$ be the 
orthogonal projection from $\mS_X\widehat{\otimes}\xi|_{W}$ onto $\ker 
\mathcal{V}$ and
$
\nabla^{\ker \mathcal{V}} =P^{\ker 
\mathcal{V}}\nabla^{\mS_X\widehat{\otimes}\xi|_{W}}P^{\ker \mathcal{V}}.
$
From (\ref{bl0960}), we have
\begin{align}\label{bl0040}
\nabla^{\ker \mathcal{V}}=\nabla^{\mS_Y\widehat{\otimes} 
\Lambda(N_{\C}^*)\otimes \mu}.
\end{align}

Here $[\xi_+]-[\xi_-]\in \widetilde{K}_G^0(V)$ is an equivariant version of 
the Atiyah-Hirzebruch direct image $i![\mu]$ in \cite{MR0110106}.
In this construction, let $\mE_Y=\mS_Y\otimes\mu$ and 
$\mE_{X,\pm}=\mS_X\widehat{\otimes}\xi_{\pm}$. Then it satisfies all 
assumptions in Section \ref{s0202}.

\subsection{Main result}
In this subsection, we state our main result.

 Let $\mF_Y$ and $\mF_X$ be the equivariant geometric families satisfying 
the assumptions in 
Section \ref{s0202}.

For $T\geq 0$, let $\nabla^{\mE_X,T}$ be the superconnection on
$\mE_X$ by
\begin{align}\label{bl0381}
\nabla^{\mE_X,T}=\nabla^{\mE_X}+\sqrt{T}\mathcal{V}.
\end{align}
Let $R_T^{\mE_X/\mS}$ be the twisting curvature of $\nabla^{\mE_X,T}$ as in 
(\ref{e01038}). Let $\dim(N_{X^g/X})=\ell_2$.
For $T>0$, by \cite{MR790678} and (\ref{bl1009}), we have the equivariant 
version of 
\cite[(1.17)]{MR1214954}:
\begin{multline}\label{bl0546}
\frac{\partial}{\partial 
	T}\tr^{\mE_X/\mS}\left[\sigma_{\ell_2}(g^{\mE_X})\exp\left(-R_T^{\mE_X/\mS}|_{V^g}\right)\right]
\\
=-d\tr^{\mE_X/\mS}\left[\sigma_{\ell_2}(g^{\mE_X})\frac{\mathcal{V}|_{V^g}}{2\sqrt{T}}\exp\left(-R_T^{\mE_X/\mS}|_{V^g}\right)\right].
\end{multline}

Recall that $\psi$ is the operator defined in (\ref{e01059}). The proof of 
the following theorem is the same as those of \cite[Theorem 6.3]{MR1316553} 
and \cite[Theorem 1.2]{MR1214954}.

\begin{thm}\label{bl0382}
	For any compact set $K\subset V^g$, there exists $C>0$, such that if 
	$\omega\in \Omega^*(V^g)$ has support in $K$,
	\begin{multline}\label{bl0383}
	\left|\int_{X^g}\omega \cdot 
	\frac{2^{\ell_2/2}}{\mathrm{det}^{1/2}(1-g|_{N_{X^g/X}})}\psi_{V^g} 
	\tr^{\mE_X/\mS}\left[\sigma_{\ell_2}(g^{\mE_X})
	\exp\left(-R_T^{\mE_X/\mS}|_{V^g}\right)\right]\right.
	\\
	\left.-\int_{Y^g}\omega\cdot\widehat{\mathrm{A}}_g^{-1}(N_{Y/X},
	\nabla^{N_{Y/X}})\ch_g(\mE_Y/\mS,
	\nabla^{\mE_Y})\right|
	\leq \frac{C}{\sqrt{T}}\|\omega\|_{\mathscr{C}^1(K)},
	\end{multline}
	and
	\begin{align}\label{bl0545}
	\left|\int_{X^g}\omega\cdot \psi_{V^g} 
	\tr^{\mE_X/\mS}\left[\sigma_{\ell_2}(g^{\mE_X})\frac{\mathcal{V}|_{V^g}}{2\sqrt{T}}\exp\left(-R_T^{\mE_X/\mS}|_{V^g}\right)\right]\right|
	\leq \frac{C}{T^{3/2}}\|\omega\|_{\mathscr{C}^1(K)}.
	\end{align}
\end{thm}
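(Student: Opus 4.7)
The plan is to follow the analytic localization strategy of \cite[Theorem 6.3]{MR1316553} and \cite[Theorem 1.2]{MR1214954}, adapted to the equivariant fibered setting. The guiding heuristic is that as $T\to\infty$ the deformed twisting curvature
$R_T^{\mE_X/\mS}=R^{\mE_X/\mS}+\sqrt{T}[\nabla^{\mE_X},\mathcal{V}]+T\mathcal{V}^2$
has a large negative summand $-T\mathcal{V}^2$ whose zero locus is $W$, so the integrand concentrates near $W^g=W\cap V^g$ and the problem reduces to a Mathai--Quillen/Thom form computation on the normal bundle $N_{Y/X}$, made possible by the fundamental assumptions \eqref{bl1011}, \eqref{bl1013}.

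First I would localize to a tubular neighborhood of $W^g$ in $V^g$. On the complement $V^g\setminus U_\epsilon$ of an $\epsilon$-tubular neighborhood of $W^g$ in $V^g$, $\mathcal{V}^2$ is uniformly bounded below, so a Duhamel-type expansion yields $\bigl\|\exp(-R_T^{\mE_X/\mS})|_{V^g\setminus U_\epsilon}\bigr\|=O(e^{-cT})$, reducing both \eqref{bl0383} and \eqref{bl0545} to integrals over $U_\epsilon$. A $G$-equivariant tubular neighborhood identification then presents $U_\epsilon$ as a neighborhood in the total space of the $g$-invariant piece of $N_{Y/X}|_{Y^g}$ over $Y^g$.

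Next I would perform the Mathai--Quillen rescaling $Z\mapsto Z/\sqrt{T}$ along the normal fibres. By \eqref{bl1011}, $\mathcal{V}$ along normal directions is modeled by $\sqrt{-1}\,\tilde{c}(Z)$, so $\mathcal{V}^2=-|Z|^2$ on the $\ker\mathcal{V}$-summand and the Gaussian weight $e^{-T|Z|^2}$ becomes the fixed kernel $e^{-|Z|^2}$, with the Jacobian absorbed into the change of fibre variable. Assumption \eqref{bl1013} lets me Taylor-expand $\nabla^{\mE_X}$ along $Z/\sqrt{T}$ and identify the leading integrand as the classical Mathai--Quillen representative of the Thom form on $N_{Y/X}$ twisted by $\ch_g(\mE_Y/\mS,\nabla^{\mE_Y})$. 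Evaluating the relative trace $\tr^{\mE_X/\mS}$ together with the localization coefficient $\sigma_{\ell_2}(g^{\mE_X})$ and the prefactor $2^{\ell_2/2}/\det^{1/2}(1-g|_{N_{X^g/X}})$, the Gaussian fibre integral over the $g$-invariant part of $N_{Y/X}|_{Y^g}$ produces exactly $\widehat{\mathrm{A}}_g^{-1}(N_{Y/X},\nabla^{N_{Y/X}})\,\ch_g(\mE_Y/\mS,\nabla^{\mE_Y})$ on $Y^g$. The $O(T^{-1/2})$ error comes from the first-order Taylor expansion of $\omega$ and of the connection/curvature coefficients, and is controlled by $\|\omega\|_{\mathscr{C}^1(K)}$. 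For \eqref{bl0545}, the extra factor $\mathcal{V}/(2\sqrt{T})$ becomes $\sqrt{-1}\,\tilde{c}(Z)/(2T)$ after rescaling; the leading Gaussian integral of $\tilde{c}(Z)$ vanishes by odd parity, so that one additional power of $1/T$ is supplied beyond the bound in \eqref{bl0383}, yielding $O(T^{-3/2})$.

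The main obstacle will be the equivariant Mathai--Quillen bookkeeping: matching $\tr^{\mE_X/\mS}$, the localization coefficient $\sigma_{\ell_2}(g^{\mE_X})$, the prefactor $2^{\ell_2/2}/\det^{1/2}(1-g|_{N_{X^g/X}})$, and the eigenspace decomposition of $N_{Y/X}|_{Y^g}$ under $g$, so that the Gaussian fibre integral collapses to the \emph{equivariant} $\widehat{\mathrm{A}}_g^{-1}(N_{Y/X})$ rather than its non-equivariant counterpart. This is essentially the combinatorics already present in the equivariant Bismut--Lebeau localization, but has to be verified in the Clifford-module formulation where $\mE_X$ is only locally a twisted spinor bundle.
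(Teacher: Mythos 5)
Your proposal follows exactly the route the paper takes: the paper does not prove Theorem \ref{bl0382} in detail but explicitly states that "the proof of the following theorem is the same as those of \cite[Theorem 6.3]{MR1316553} and \cite[Theorem 1.2]{MR1214954}," i.e.\ the equivariant Mathai--Quillen localization near $W^g$, the $Z\mapsto Z/\sqrt{T}$ rescaling along $N_{Y/X}$, and parity vanishing for the second estimate that you outline. One small sign slip: since $\mathcal{V}$ is modeled by $\sqrt{-1}\,\tilde{c}(Z)$ and $\tilde{c}(Z)^2=-|Z|^2$, one has $\mathcal{V}^2\approx +|Z|^2$ (not $-|Z|^2$); this is what makes $\exp(-T\mathcal{V}^2)\approx e^{-T|Z|^2}$ a decaying Gaussian, which is the weight you correctly used in the next step.
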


Now we could extend the Bismut-Zhang current in \cite[Definition 
1.3]{MR1214954} to the equivariant case.
\begin{defn}\label{bl0554}
	The equivariant Bismut-Zhang current $\gamma_g^X(\mF_Y,\mF_X)$ over 
	$V^g$ is defined by
	\begin{multline}\label{bl0555}
	\gamma_g^X(\mF_Y,\mF_X)=\frac{1}{2\sqrt{\pi 
		}\sqrt{-1}}\cdot\frac{2^{\ell_2/2}}{\mathrm{det}^{1/2}(1-g|_{N_{X^g/X}})}
	\\
	\cdot\int_{0}^{\infty}\psi_{V^g} 
	\tr^{\mE_X/\mS}\left[\sigma_{\ell_2}(g^{\mE_X})\mathcal{V}|_{V^g}\exp\left(-R_T^{\mE_X/\mS}|_{V^g}\right)\right]\frac{dT}{2\sqrt{T}}.
	\end{multline}
\end{defn}

By Theorem \ref{bl0382}, the current $\gamma_g^X(\mF_Y,\mF_X)$ is 
well-defined.

Let $\delta_{W^g}$ be the current of integration over the submanifold $W^g$ 
in $V^g$. By integrating (\ref{bl0546}) and using Theorem \ref{bl0382}, we 
have the following equivariant extension of \cite[Theorem 1.4]{MR1214954}.

\begin{thm}\label{bl0558}
	The following equation of currents holds
	\begin{multline}\label{bl0559}
	d\gamma_g^X(\mF_Y,\mF_X)=\ch_g(\mE_X/\mS, \nabla^{\mE_X})
	\\
	-\widehat{\mathrm{A}}_g^{-1}(N_{Y/X},\nabla^{N_{Y/X}})\ch_g(\mE_Y/\mS, 
	\nabla^{\mE_Y})\delta_{W^g}.
	\end{multline}
\end{thm}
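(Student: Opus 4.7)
The plan is to obtain (\ref{bl0559}) by a transgression argument: differentiate under the integral sign in Definition \ref{bl0554}, apply (\ref{bl0546}) to convert $d$ into $\partial/\partial T$, and then reduce everything to boundary values at $T=0$ and $T=\infty$.

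First, I apply $d$ to the expression for $\gamma_g^X(\mF_Y,\mF_X)$ in (\ref{bl0555}). Assuming that $d$ may be passed under the improper integral $\int_0^\infty \cdot\,\frac{dT}{2\sqrt{T}}$, justified by the uniform estimates of Theorem \ref{bl0382} (in particular the $T^{-3/2}$-decay in (\ref{bl0545}), which gives integrability at infinity), I use (\ref{bl0546}) to rewrite
\[
d\,\tr^{\mE_X/\mS}\bigl[\sigma_{\ell_2}(g^{\mE_X})\,\mathcal{V}|_{V^g}\exp\bigl(-R_T^{\mE_X/\mS}|_{V^g}\bigr)\bigr]=-2\sqrt{T}\,\frac{\partial}{\partial T}\tr^{\mE_X/\mS}\bigl[\sigma_{\ell_2}(g^{\mE_X})\exp\bigl(-R_T^{\mE_X/\mS}|_{V^g}\bigr)\bigr].
\]
After combining with the measure $\frac{dT}{2\sqrt{T}}$, the $T$-integration telescopes, yielding a boundary term of the form
\[
d\gamma_g^X(\mF_Y,\mF_X)=C\cdot\frac{2^{\ell_2/2}}{\mathrm{det}^{1/2}(1-g|_{N_{X^g/X}})}\Bigl[\psi_{V^g}\tr^{\mE_X/\mS}[\sigma_{\ell_2}(g^{\mE_X})\exp(-R_T^{\mE_X/\mS}|_{V^g})]\Bigr]_{T=0}^{T=\infty},
\]
where $C$ is a constant coming from the normalization $1/(2\sqrt{\pi}\sqrt{-1})$ in Definition \ref{bl0554} together with the commutation $\psi\,d=(1/\sqrt{\pi})\,d\,\psi$ between the degree-dependent $\psi$-operator (\ref{e01059}) and the exterior derivative. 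A short bookkeeping of signs gives $C=-1$.

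Next, I evaluate the two endpoints. At $T=0$ one has $R_0^{\mE_X/\mS}=R^{\mE_X/\mS}$, and the formula (\ref{bl1010}) identifies the bracketed expression with $\ch_g(\mE_X/\mS,\nabla^{\mE_X})$. At $T=\infty$, the pointwise form degenerates off $W^g$, but estimate (\ref{bl0383}) of Theorem \ref{bl0382} says that when paired against any compactly supported test form $\omega\in\Omega^*(V^g)$, the integrand converges to $\int_{Y^g}\omega\cdot\widehat{\mathrm{A}}_g^{-1}(N_{Y/X},\nabla^{N_{Y/X}})\,\ch_g(\mE_Y/\mS,\nabla^{\mE_Y})$; this is precisely the distributional statement that, as currents on $V^g$, the $T\to\infty$ limit equals $\widehat{\mathrm{A}}_g^{-1}(N_{Y/X},\nabla^{N_{Y/X}})\,\ch_g(\mE_Y/\mS,\nabla^{\mE_Y})\,\delta_{W^g}$. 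Substituting both endpoint values into the telescoping expression and using $C=-1$ produces (\ref{bl0559}).

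The main obstacle is making the $T=\infty$ endpoint rigorous as an equality of currents on $V^g$: one must justify simultaneously the exchange of $d$ with the improper integral and the interpretation of the Bismut-style large-$T$ localization as convergence of currents rather than as a merely pointwise limit. This is handled by combining (\ref{bl0545}) (integrability at infinity of the $\mathcal{V}$-insertion) with (\ref{bl0383}) (fiberwise convergence to the delta current on $W^g$), both tested against arbitrary $\omega\in\Omega^*(V^g)$ of compact support. Once this is settled, the remaining work is the careful tracking of the $\psi$-normalization and the Clifford sign conventions, which proceeds in complete parallel with the non-equivariant computation in \cite[Theorem 1.4]{MR1214954}.
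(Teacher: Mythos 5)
Your proposal is correct and matches the paper's own (one-line) proof exactly: differentiate under the integral in Definition \ref{bl0554}, use (\ref{bl0546}) to telescope in $T$, and evaluate the boundary terms via (\ref{bl1010}) at $T=0$ and Theorem \ref{bl0382} at $T=\infty$. One small correction to a side remark: for the odd-degree integrand of $\gamma_g^X$ the relevant commutation is $\psi\,d=\tfrac{1}{2\sqrt{\pi}\sqrt{-1}}\,d\,\psi$ (you quoted the even-degree relation $\tfrac{1}{\sqrt{\pi}}\,d\,\psi$), but since the $\tfrac{1}{2\sqrt{\pi}\sqrt{-1}}$ prefactor in (\ref{bl0555}) cancels it exactly, your stated conclusion $C=-1$ is correct.
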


\begin{rem}\label{bl0561}
	Similarly as in \cite{MR1214954}, the wave front set 
	$\mathrm{WF}(\gamma_g^X)$ of the current $\gamma_g^X$ is included in 
	$N_{W^g/V^g}^*$ and $\gamma_g^X(\mF_Y,\mF_X)$ is a locally integrable 
	current.
\end{rem}

\begin{prop}\label{bl0676}
	Let $\mathcal{A}_Y$ be a perturbation operator with respect to 
	$D(\mF_Y)$.
	Then there exists a family of 
	bounded pseudodifferential operator $\mA_{T,Y}$ on $\mF_X$, depending
	continuously on $T\geq 1$, such that
	the norm of $\mA_{T,Y}$ is the same as that of $\mA_Y$ for any $T\geq 1$ 
	and
	 for any compact submanifold $K$ of $B$, 
	there exists $T_0\geq 1$ such that $T\mathcal{V}+\mA_{T,Y}$ is the 
	perturbation 
	operator 
	with respect to $D(\mF_X)$ over $K$ for $T\geq T_0$.
\end{prop}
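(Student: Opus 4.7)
The plan is to build $\mA_{T,Y}$ by lifting $\mA_Y$ to act on sections of $\mE_X$ supported near $W$, using the tubular neighbourhood of $W$ in $V$ and the fundamental identification (\ref{bl1011}), and then to verify invertibility by a Bismut--Lebeau-type analytic localization as $T\to\infty$. More precisely, fix $\varepsilon_0>0$ such that the exponential map in the normal directions realizes a $G$-equivariant diffeomorphism of the $2\varepsilon_0$-disc bundle $N_{2\varepsilon_0}\subset N_{Y/X}$ onto an open tubular neighbourhood of $W$ in $V$. Choose a $G$-invariant cutoff $\rho\in\cC^\infty(V)$ equal to $1$ on $N_{\varepsilon_0}$ and supported in $N_{2\varepsilon_0}$. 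Using the fundamental assumption (\ref{bl1011}) and (\ref{bl1013}), $\ker\mathcal{V}$ is canonically identified with $\pi_N^*(\Lambda(N_{\C}^*)\widehat{\otimes}\mE_Y)$ as $G$-equivariant Hermitian bundles with connection. Compose the Gaussian ground state projector in the normal variable with this identification to obtain a continuous family of equivariant partial isometries $J_T:\cC^\infty(W,\mE_Y)\to\cC^\infty(V,\mE_X)$ supported in $N_{\varepsilon_0}$ (the Gaussian is normalized so that $J_T^*J_T=\Id$). Define
\[
\mA_{T,Y}:=J_T\,(\pi_W^*\mA_Y)\,J_T^*.
\]
This gives a smooth family of bounded $G$-equivariant selfadjoint pseudodifferential operators on $\mE_X$, with the correct parity with respect to $\tau^{\mE_X/\mS}$ (since $\mA_Y$ has the right parity and $\Lambda^0(N_{\C}^*)$ contributes trivially), and with the same operator norm as $\mA_Y$ because $J_T$ is a partial isometry.

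For invertibility I follow the localization strategy of \cite{MR1188532,MR1316553,Bismut1997}. Decompose $L^2(X_b,\mE_X)$ into the image $E_T$ of $J_T$ and its orthogonal complement $E_T^\perp$. On $E_T^\perp$, Bismut--Lebeau type estimates show $\|(D(\mF_X)+T\mathcal{V}+\mA_{T,Y})s\|\geq c\sqrt{T}\|s\|$ for $T$ large, uniformly over the compact submanifold $K$, because (i) the rescaled fibrewise operator in the normal variable is a harmonic oscillator of gap $\sqrt{T}$ away from its ground state, and (ii) $\mathcal{V}$ is uniformly invertible outside a neighbourhood of $W$. On $E_T$, the rescaled normal variable computation of \cite[Theorem 8.18]{MR1188532} shows that $J_T^*(D(\mF_X)+T\mathcal{V})J_T$ converges, as $T\to\infty$, to $D(\mF_Y)$ acting on $\mE_Y$ via the identification of the ground state with $\mE_Y=\Lambda^0(N_{\C}^*)\widehat{\otimes}\mE_Y$. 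Therefore $J_T^*(D(\mF_X)+T\mathcal{V}+\mA_{T,Y})J_T\to D(\mF_Y)+\mA_Y$, which is invertible by hypothesis, and the same invertibility persists on $E_T$ for $T$ large. A standard off-diagonal estimate controls the coupling between $E_T$ and $E_T^\perp$ by $O(T^{-1/2})$, so a Neumann series argument gives invertibility of the full operator over $K$ for $T\geq T_0$.

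Continuity in $T$ follows because $J_T$ and $\rho$ depend smoothly on $T\geq 1$, and the norm identity follows from $J_T^*J_T=\Id$ together with the selfadjointness and parity of $\mA_Y$. The main obstacle is the spectral gap/localization step on $E_T^\perp$: one must combine the linear growth of $T\mathcal{V}$ away from $\ker\mathcal{V}$ with the harmonic-oscillator gap $\sqrt{T}$ of $D(\mF_X)+T\mathcal{V}$ normal to $W$, in the presence of the base parameter $b\in K$ and the group action. This is done along the lines of \cite[Section 8]{MR1188532} and \cite[Section 6]{MR1316553}, whose estimates carry over verbatim in the equivariant parametrized Clifford-module setting once $T^HV|_W=T^HW$ and (\ref{bl1013}) are used to match the connections to leading order in the normal variable.
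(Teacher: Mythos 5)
Your proposal is correct and takes essentially the same approach as the paper: both construct the Bismut--Lebeau isometric embeddings $J_{T,b}:L^2(Y_b,\mE_Y)\to L^2(X_b,\mE_X)$ (the Gaussian ground-state picture you describe is exactly what \cite[Definition 9.12]{Bismut1997} produces), define $\mA_{T,Y}=J_T\mA_Y J_T^*$ (the paper writes $J_T^{-1}$ extended by zero on $\mathbb{E}_T^\perp$, which equals $J_T^*$), and then rerun the localization estimates of \cite[Sections 8--9]{MR1188532} with $D(\mF_Y)$, $D(\mF_X)+T\mathcal{V}$ replaced by $D(\mF_Y)+\mA_Y$, $D(\mF_X)+T\mathcal{V}+\mA_{T,Y}$. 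One cosmetic slip: the parity required by Definition~\ref{d179} is with respect to $\tau^{\mE_X}$, not $\tau^{\mE_X/\mS}$, though your argument (ground state lies in $\Lambda^0(N_\C^*)$) gives the right conclusion.
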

\begin{proof} 
	Following the arguments in \cite[Section 8, 9]{MR1188532} and 
	\cite[Section 
	4b)]{MR1214954} word by word, we could construct a smooth family of 
	equivariant isometric 
	embeddings 
	\begin{align}\label{bl0983} 
	J_{T,b}:L^2(Y_b,\mE_Y|_{Y_b})\rightarrow L^2(X_b,\mE_X|_{X_b})
	\end{align}
	for $b\in B$, as in \cite[Definition 9.12]{Bismut1997}.
	
	Let $\mathbb{E}_{T,b}$ be the image of $L^2(Y_b,\mE_Y|_{Y_b})$ in 
	$L^2(X_b,\mE_X|_{X_b})$ by 
	$J_{T,b}$. Let $\mathbb{E}_{T,b}^{\bot}$ be the orthogonal space to 
	$\mathbb{E}_{T,b}$ 
	in $L^2(X_b,\mE_X|_{X_b})$. Since $J_{T,b}$ is an isometric embedding, 
	$J_{T,b}:L^2(Y_b,\mE_Y|_{Y_b})\rightarrow \mathbb{E}_T$ is invertible. 
	We 
	extend the 
	domain of $J_{T,b}^{-1}$ to $L^2(X_b,\mE_X|_{X_b})$ such that it 
	vanishes on 
	$\mathbb{E}_{T,b}^{\bot}$.
	
	Let $\mA_{T,Y}=\{\mA_{T,Y,b} \}_{b\in B}$ be the family of bounded 
	pseudodifferential operators
	\begin{align}\label{bl0232}
	\mA_{T,Y,b}:=J_{T,b} \mA_{Y,b} 
	J_{T,b}^{-1}:L^2(X_b,\mE_X|_{X_b})\rightarrow L^2(X_b,\mE_X|_{X_b}).
	\end{align}
	Then $\mA_{T,Y}$ is a smooth family of equivariant self-adjoint 
	operators. From the definition of the perturbation 
	operator $\mA_Y$, we see that $\mA_{T,Y}$ commutes (resp. anti-commutes) 
	with the $\Z_2$-grading $\tau^{\mE}$ of 
	$\mE_X$ when the fibres are odd (resp. even) dimensional. Since $J_T$ is 
	isometric, the $L^2$-norm of $\mA_{T,Y}$ is the same as that of $\mA_Y$.

	Since $J_T$ is continuous with respect to $T$, so is the operator 
	$\mA_{T,Y}$. We only need to prove that $D(\mF_X)+T\mathcal{V}+ 
	\mA_{T,Y}$ 
	over $K$ is invertible for $T$ large enough. 
	
	Over a compact submanifold $K$ of $B$, the same estimates of 
	$D(\mF_X)+T\mathcal{V}$ as \cite[Theorem 9.8, 9.10, 9.11]{MR1188532} 
	hold. 
	Since $D(\mF_Y)+\mA_Y$ is invertible, the arguments in \cite[Section 
	9]{MR1188532}, in which we replace $D(\mF_Y)$ and 
	$D(\mF_X)+T\mathcal{V}$ by 
	$D(\mF_Y)+\mA_Y$ and $D(\mF_X)+T\mathcal{V}+\mA_{T,Y}$, imply that 
	there exists $T_0\geq 1$, depending on $K$, such that for any $T\geq 
	T_0$,
	$D(\mF_X)+T\mathcal{V}+\mA_{T,Y}$ is invertible. Moreover, the 
	absolutely value of the spectrum of $D(\mF_X)+T\mathcal{V}+\mA_{T,Y}$ 
	has a uniformly positive lower bound for $T\geq T_0$. 
	
	The proof of our proposition is completed.
\end{proof}

Now we state our main result of this paper.
\begin{thm}\label{bl0563}
	Let $\mA_Y$ and $\mA_{X}$ be the perturbation operators with respect to 
	$D(\mF_{Y})$ and $D(\mF_X)$. 
	Let 
	$\mA_{T,Y}$ be the operator in Proposition \ref{bl0676} with respect to 
	$D(\mF_X)$.
	Then for any compact submanifold $K$ of $B$, there 
	exists $T_0>2$ such 
	that
	 for any $T\geq T_0$, modulo exact forms, over $K$, we have
	\begin{multline}\label{bl0564}
	\tilde{\eta}_g(\mF_X, \mathcal{A}_X)=\tilde{\eta}_g(\mF_Y, 
	\mathcal{A}_Y)+\int_{X^g}\widehat{\mathrm{A}}_g(TX, 
	\nabla^{TX})\,\gamma_g^X(\mF_Y,\mF_X)
%
	\\
	+\ch_g(\mathrm{sf}_G\{D(\mF_X)+\mA_X, D(\mF_{X}) 
	+T\mathcal{V}+\mA_{T,Y}\})
	.
	\end{multline}
	
\end{thm}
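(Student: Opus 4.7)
My plan follows the two-step strategy laid out in the introduction. First, I would reduce Theorem \ref{bl0563} to the case where $B$ is a point and $\dim X$ is odd via the functoriality of equivariant eta forms (Theorem \ref{d188}). Second, in this base-point odd-dimensional case, I would combine the anomaly formula (Theorem \ref{e01105}(a)) with an extension of the analytical localization technique of Bismut--Zhang \cite{MR1214954} and Bismut--Lebeau \cite{MR1188532} to the equivariant setting.

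For the reduction step, fix a compact submanifold $K \subset B$ and select an auxiliary $G$-equivariant fibration with fibre $M$ and a geometric family $\mF_M$ whose dimension is arranged so that the composed total fibration has odd-dimensional fibres over a point. Applying Theorem \ref{d188} separately to $\pi_V$ and $\pi_W$ composed with this auxiliary fibration, each term in (\ref{bl0564}) is paired against a common auxiliary factor of the form $\int_{Y^g}\widehat{\mathrm{A}}_g(TY,\nabla^{TY})\,\ch_g(\mE_Y/\mS,\nabla^{\mE_Y})\,\tilde{\eta}_g(\mF_M,\mA_M)$. The Bismut--Zhang current $\gamma_g^X$ behaves multiplicatively under totally-geodesic products of fibrations thanks to (\ref{bl0960}), and the equivariant higher spectral flow composes in the expected way under such products, so the family version of (\ref{bl0564}) follows from the base-point version applied fibrewise, with the Chern--Simons corrections on the right-hand side of (\ref{d189}) cancelling consistently between the two sides.

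For the base-point case, the equivariant eta form reduces to the equivariant reduced eta invariant by \cite[Remark 2.20]{Liu2016}. Applying Theorem \ref{e01105}(a) to the single family $\mF_X$ with perturbation operators $\mA_X$ and $T\mV + \mA_{T,Y}$, all geometric Chern--Simons contributions vanish and one obtains
\begin{equation*}
\tilde{\eta}_g(\mF_X, \mA_X) - \tilde{\eta}_g(\mF_X, T\mV + \mA_{T,Y}) = \ch_g\bigl(\mathrm{sf}_G\{D(\mF_X)+\mA_X,\, D(\mF_X) + T\mV + \mA_{T,Y}\}\bigr).
\end{equation*}
It therefore suffices to prove the limit identity
\begin{equation*}
\tilde{\eta}_g(\mF_X, T\mV + \mA_{T,Y}) = \tilde{\eta}_g(\mF_Y, \mA_Y) + \int_{X^g} \widehat{\mathrm{A}}_g(TX, \nabla^{TX})\,\gamma_g^X(\mF_Y,\mF_X)
\end{equation*}
modulo exact forms for $T \geq T_0$. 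This is the equivariant reduced-eta-invariant version of the Bismut--Zhang embedding formula. I would derive it by writing the left-hand side as a $u$-integral of the eta integrand in (\ref{i19}) for the deformed operator $D(\mF_X) + T\mV + \mA_{T,Y}$, splitting the integration range into small-$u$, intermediate-$u$, and large-$u$ pieces, and applying Theorem \ref{bl0382} together with the fundamental assumption (\ref{bl1011})--(\ref{bl1013}). The small-$u$ regime produces the Bismut--Zhang current $\gamma_g^X$ via the spectral localization of $T\mV$ near $W$; the large-$u$ regime collapses onto $\ker\mV|_W = \Lambda(N_{\C}^*)\widehat\otimes \mE_Y$ after projecting onto the harmonic-oscillator ground state along $N_{Y/X}$, reproducing the eta integrand of $D(\mF_Y) + \mA_Y$; and the intermediate regime contributes an error that vanishes uniformly as $T \to \infty$.

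The main obstacle is establishing the uniform heat-kernel estimates for $\exp(-(D(\mF_X) + T\mV + \mA_{T,Y})^2)$ in the equivariant setting. One must verify that the $T$-dependent perturbation $\mA_{T,Y}$ from Proposition \ref{bl0676} does not disrupt the Getzler-type rescaling argument near $W$: its commutators with the Clifford action and with powers of $D(\mF_X) + T\mV$ must be controlled $T$-uniformly, and the rescaled local model near $W$ must converge to the expected tensor of the harmonic oscillator on $N_{Y/X}$ with the Bismut superconnection on $W$ perturbed by $\mA_Y$. Once these estimates are in place, the argument follows the lines of \cite[Theorems 2.1, 2.2]{MR1214954} and the technical sections of \cite{MR1316553,MR1188532}, adapted to the equivariant setting via the localization results in \cite[Chapter 6]{MR2273508}.
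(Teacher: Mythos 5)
Your high-level architecture (functoriality reduction to the base-point case, then a Bismut--Zhang-type analysis for that case) matches the paper's. But there are two genuine issues in how you propose to carry it out.

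\textbf{The reduction step is described incorrectly.} You propose to ``apply Theorem \ref{d188} separately to $\pi_V$ and $\pi_W$ composed with [an] auxiliary fibration'' with an extra fibre $M$ and claim that ``the family version of (\ref{bl0564}) follows from the base-point version applied fibrewise.'' This is not how the reduction works, and the fibrewise claim is a non sequitur: the base-point theorem produces a scalar identity for each choice of closed total space, while the family theorem asserts an identity of differential forms on $B$ modulo $\Im\,d$; one cannot pass from the former to the latter by evaluating ``fibrewise.'' What the paper actually does is take $B$ itself as the \emph{outer} fibre of the composite $V\to B\to\mathrm{pt}$, and it chooses (Lemma \ref{bl1004}) a $C(TB)$-module $\mE_B$ whose $\widehat{\mathrm{A}}(TB)\ch(\mE_B/\mS)$ class is a nonzero constant $q\in\Z_+$ up to an exact form. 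Theorem \ref{d188} applied to $\mF_V$ and $\mF_W$ (both viewed as geometric families over a point, with $\pi_V^*\mE_B$, $\pi_W^*\mE_B$ tensored in) plus the base-point embedding formula then yields $\int_B\Delta_B=0$, where $\Delta_B$ is the discrepancy in (\ref{bl0564}). Repeating this with $B$ replaced by any closed submanifold $K\subset B$ gives $\int_K\Delta_B=0$ for all closed $K$, and \emph{only then} does de Rham's theorem give exactness of $\Delta_B$. Your proposal contains neither Lemma \ref{bl1004} nor the pass from pointwise integral vanishing to exactness of the form, which are the two essential ingredients of the reduction; without them you cannot promote the scalar equality to a $\mathrm{mod}\,\Im\,d$ equality of forms.

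\textbf{The difficulty you flag in the local analysis is spurious, which is itself a warning sign.} You devote a paragraph to worrying that the $T$-dependent perturbation $\mA_{T,Y}$ might ``disrupt the Getzler-type rescaling argument near $W$'' and that its commutators with the Clifford action must be controlled $T$-uniformly. In fact the eta-form integrand is defined with a cutoff $\chi(\sqrt{u})$ (see (\ref{bl0571}) and the surrounding discussion), so for $u\le1$ the perturbation operator does not appear at all. Consequently the small-$u$ estimates (Theorem \ref{e03022} ii)--iv), where the Bismut--Zhang current is produced by Getzler rescaling) are literally the nonequivariant proofs of \cite{MR1214954} transcribed, with no interaction between $\mA_{T,Y}$ and the rescaling. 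The perturbation only enters the large-$u$ estimates (Theorems \ref{bl0584}, \ref{bl0588}), where one needs the uniform spectral gap of Proposition \ref{bl0676} and the comparisons (\ref{bl1016}) with $D(\mF_Y)+\delta\mA_Y$ for $\delta\in[0,1]$. Your plan places the technical burden in exactly the wrong place, which suggests you have not identified the role the cutoff function plays in decoupling the two regimes; the contour-integral bookkeeping in the paper (the closed $1$-form $\beta_g=du\wedge\beta^u_g+dT\wedge\beta^T_g$ on the $(T,u)$-quadrant and the four contour pieces $\Gamma_1,\ldots,\Gamma_4$) is designed precisely around this decoupling.
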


Observe that 
since we only need to prove (\ref{bl0564}) over a compact submanifold, in 
the proof of Theorem \ref{bl0563}, we may assume that
$B$ is 
compact.

Assume that the base space is a point and $TY$, $TX$ have equivariant 
Spin structure. Then there exist equivariant complex vector bundles 
$\mu$ and 
$\xi_{\pm}$ such that $\mE_Y=\mS_Y\otimes\mu$ and 
$\mE_{X,\pm}=\mS_X\widehat{\otimes}\xi_{\pm}$. The following corollary is a 
direct consequence of Theorem \ref{bl0563}.

\begin{cor}\label{bl1002} 
	There exists $x\in R(G)$, the representation ring of $G$, 
	such that
	\begin{multline}\label{bl1003}
	\ov{\eta}_g(X, \xi_+)-\ov{\eta}_g(X, \xi_-)=\ov{\eta}_g(Y, 
	\mu)
	\\
	+\int_{X^g}\widehat{\mathrm{A}}_g(TX, 
	\nabla^{TX})\,\gamma_g^X(\mF_Y,\mF_X)+\chi_g(x).
	\end{multline}
	Here $x$ could be written as an equivariant spectral flow, 
	$\chi_g(x)$ is the character of $g$ on $x$ and $\ov{\eta}_g$ is the 
	equivariant reduced eta invariant.
\end{cor}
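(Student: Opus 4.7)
The plan is to specialize Theorem \ref{bl0563} to $B = \mathrm{pt}$ with the Spin data of the corollary, and then unpack each term of (\ref{bl0564}) into the classical language of reduced eta invariants and characters. Since the one-variable reduced eta invariant $\ov{\eta}_g$ appears on both sides, both fibres must be odd-dimensional; together with the hypothesis that $\dim V - \dim W_\alpha$ is even this is consistent, and in particular $\mS_X$ and $\mS_Y$ are ungraded.

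First I would fix the perturbation operators $\mA_Y := P_{\ker D(\mF_Y)}$ and $\mA_X := P_{\ker D(\mF_X)}$. By the remark following (\ref{i19}), when $B$ is a point and the fibre is odd-dimensional, $\tilde{\eta}_g(\mF, P_{\ker D(\mF)})$ coincides with the canonical equivariant reduced eta invariant of \cite{MR511246}. Combined with $\mE_Y = \mS_Y \otimes \mu$ and the standing assumption $\tau^{\mE_Y/\mS} \equiv 1$, this gives $\tilde{\eta}_g(\mF_Y, \mA_Y) = \ov{\eta}_g(Y, \mu)$. For $\mF_X$, we have $\mE_{X,\pm} = \mS_X \widehat{\otimes}\, \xi_\pm$; since $\dim X$ is odd, the grading $\tau^{\mE_X}$ restricts to $\pm \Id$ on $\mS_X \widehat{\otimes}\, \xi_\pm$, so the supertrace $\tr_s^{\mathrm{even}}$ in (\ref{i19}) splits across the two summands as a difference. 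This identifies $\tilde{\eta}_g(\mF_X, \mA_X) = \ov{\eta}_g(X, \xi_+) - \ov{\eta}_g(X, \xi_-)$.

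For the last term in (\ref{bl0564}), I would recall from the discussion preceding Theorem \ref{e01105} that when $B$ is a point the equivariant Dai--Zhang higher spectral flow degenerates to the canonical equivariant spectral flow of a path of $G$-equivariant self-adjoint Fredholm operators on $\mE_X$. Setting $x := \mathrm{sf}_G\{D(\mF_X) + \mA_X,\, D(\mF_X) + T\mathcal{V} + \mA_{T,Y}\} \in K_G^0(\mathrm{pt}) = R(G)$ gives a virtual $G$-representation, and over a point the equivariant Chern character is precisely the character evaluation $\ch_g(x) = \chi_g(x)$. Substituting these three identifications into (\ref{bl0564}) yields (\ref{bl1003}). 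The only nontrivial bookkeeping is the grading argument on $\mE_X$ that converts $\tilde{\eta}_g(\mF_X, \mA_X)$ into a signed sum of two scalar reduced eta invariants; no genuine obstacle arises, since the corollary is a direct reading of the main theorem in the Spin-structured, base-point specialization.
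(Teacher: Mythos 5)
Your argument is correct and matches the paper's intent; the paper only states that Corollary 2.13 is ``a direct consequence of Theorem \ref{bl0563}'', and your three identifications (the two reduced eta invariants via the remark following (\ref{i19}) with $\mA = P_{\ker D}$, and $\ch_g = \chi_g$ on $K_G^0(\mathrm{pt}) = R(G)$) are precisely the bookkeeping that remark leaves implicit. The grading observation — that since $\dim X$ is odd, $\tau^{\mE_X}$ restricts to $\pm\Id$ on $\mS_X\widehat{\otimes}\xi_\pm$, so $\tr_s$ splits $\tilde{\eta}_g(\mF_X,\mA_X)$ into the signed difference $\ov{\eta}_g(X,\xi_+)-\ov{\eta}_g(X,\xi_-)$ — is exactly the content needed, and your inference that both $X$ and $Y$ are odd-dimensional is forced by the even-codimension hypothesis once one term is odd (in the even case every term vanishes and the statement is trivial with $x=0$).
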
 

When $g=1$, Corollary \ref{bl1002} is the modification of the Bismut-Zhang 
embedding formula by expressing the $\mathrm{mod}\,\Z$ term as a spectral 
flow. Note that in \cite[Theorem 4.1]{MR1870658}, the authors give an index 
interpretation of the $\mathrm{mod}\,\Z$ term of the embedding formula when 
the manifolds are the boundaries. It is also interesting to find the 
equivariant family extension of that formula. 


\begin{cor}
Let $X$ be an odd-dimensional closed $G$-equivariant Spin$^c$ manifold.
For $g\in G$, let $(\mu, h^{\mu})$ be an 
equivariant Hermitian vector bundle over $X^g$ with a $G$-invariant 
Hermitian connection $\nabla^{\mu}$. Then there exist a 
$\Z_2$-graded equivariant Hermitian vector bundle $(\xi,h^{\xi})$ over $X$ 
with a $G$-invariant Hermitian connection $\nabla^{\xi}$ and 
 $x\in R(G)$, such that
\begin{align}\label{bl1001}
\ov{\eta}_g(X,\xi_+)-\ov{\eta}_g(X,\xi_-)=\ov{\eta}_g(X^g, \mu)
+\chi_g(x).
\end{align}
\end{cor}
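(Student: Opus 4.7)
I plan to derive this corollary from Corollary \ref{bl1002} by specializing to the embedding $i:X^g\hookrightarrow X$. Since $X$ is equivariantly Spin$^c$ and the rotation action on the normal bundle $N_{X^g/X}$ has no trivial directions, $N_{X^g/X}$ carries a canonical $G$-equivariant Spin$^c$ structure induced from that of $X$, and hence $X^g$ is also Spin$^c$. Thus the geometric setup of Section \ref{s0203} is realized with $W=Y=X^g$, $V=X$, and $B$ a point.

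I would then apply the equivariant Atiyah-Hirzebruch direct image construction of Section \ref{s0203} to the given bundle $\mu$ over $X^g$, obtaining a $\Z_2$-graded equivariant Hermitian vector bundle $(\xi,h^\xi,\nabla^\xi)$ on $X$ together with a self-adjoint grading-reversing endomorphism $\mathcal{V}$ of $\mS_X\widehat{\otimes}\xi$ satisfying the fundamental assumption (\ref{bl1011})-(\ref{bl1013}). Corollary \ref{bl1002} applied to this data gives
\[
\ov{\eta}_g(X,\xi_+)-\ov{\eta}_g(X,\xi_-) = \ov{\eta}_g(X^g,\mu) + \int_{X^g}\widehat{\mathrm{A}}_g(TX,\nabla^{TX})\,\gamma_g^X(\mF_Y,\mF_X)+\chi_g(x_0),
\]
for some $x_0\in R(G)$ arising from the equivariant Dai-Zhang spectral flow.

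The remaining step, and the main obstacle, is to show that the Bismut-Zhang current integral $I:=\int_{X^g}\widehat{\mathrm{A}}_g(TX,\nabla^{TX})\,\gamma_g^X(\mF_Y,\mF_X)$ equals $\chi_g(x_1)$ for some $x_1\in R(G)$, so that the corollary follows with $x=x_0+x_1$. Because here $W^g=V^g=X^g$, the current $\gamma_g^X$ is already a smooth form on $X^g$ given by the convergent integral (\ref{bl0555}), and I would evaluate $I$ using the explicit local form of $\mathcal{V}$ in (\ref{bl0036}) and of $\nabla^\xi$ in (\ref{bl0037}), together with a Duhamel expansion of $\exp(-R_T^{\mE_X/\mS}|_{V^g})$ and rescaling in $T$.

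To identify $I$ with a character value, I would exploit the flexibility in the choice of the auxiliary equivariant bundle $E$ of (\ref{bl0035}): stably replacing $E$ by $E\oplus F$ modifies $\xi_+$ and $\xi_-$ symmetrically by $F$, so the LHS of the corollary is unchanged while the Atiyah-Hirzebruch data (and hence $\gamma_g^X$) changes in a controlled way. Combined with the anomaly formula (Theorem \ref{e01105}), this should reduce $I$ modulo $\chi_g(R(G))$ to a stably invariant quantity that one can match with an equivariant index on $X^g$ via the Atiyah-Segal-Singer fixed-point theorem, completing the proof.
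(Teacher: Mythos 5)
Your setup is exactly the one the paper uses: take $W=X^g$, $V=X$ (a point base), observe that $X^g$ is naturally totally geodesic in $X$, apply the equivariant Atiyah--Hirzebruch construction of Section~\ref{s0203} to $\mu$ to get $(\xi,h^\xi,\nabla^\xi)$ and $\mathcal{V}$, and then invoke Corollary~\ref{bl1002}. Where you diverge is at the Bismut--Zhang current term $I=\int_{X^g}\widehat{\mathrm{A}}_g(TX,\nabla^{TX})\,\gamma_g^X(\mF_{X^g},\mF_X)$: you treat it as a genuinely nonzero quantity to be identified with a character value via Duhamel expansion, stability in the auxiliary bundle $E$, and the Atiyah--Segal--Singer fixed point theorem. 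In fact no such argument is needed, because $I=0$ identically.

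The point you miss is that in this special case $W^g=V^g=X^g$, so the integrand of (\ref{bl0555}) is evaluated along $Z=0$. From (\ref{bl0036}), $\mathcal{V}|_{X^g}=\Id_{\mS_X}\widehat\otimes\bigl(\sqrt{-1}\tilde c(0)\oplus\Id_E\bigr)$, i.e.\ it vanishes on the $\mS_N^*\otimes\mu$ block, and on the $E\oplus E$ block it is the grading-reversing swap. On $X^g$ the twisting curvature $R_T^{\mE_X/\mS}|_{X^g}$ is block diagonal in $\xi|_{X^g}=(\mS_N^*\otimes\mu)\oplus(E\oplus E)$ and even, so $\exp(-R_T^{\mE_X/\mS}|_{X^g})$ preserves the $\Z_2$-grading of $\xi$; hence $\sigma_{\ell_2}(g^{\mE_X})\,\mathcal{V}|_{X^g}\,\exp(-R_T^{\mE_X/\mS}|_{X^g})$ is grading-reversing on the $E$ block and identically zero on the other block, so its supertrace vanishes for every $T$. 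Therefore $\gamma_g^X(\mF_{X^g},\mF_X)=0$ and the corollary follows at once from Corollary~\ref{bl1002}, with $x$ the spectral-flow class. (This is precisely the content of the paper's one-line remark that $\mathcal{V}|_{X^g}=0$, read on the relevant block.) Your proposed route, besides being far more elaborate, is not obviously viable as a proof strategy: a priori $I$ is just a real number, and there is no structural reason it should lie in $\chi_g(R(G))$ unless one already knows it vanishes.
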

\begin{proof}
	Note that $X^g$ is naturally totally geodesic in $X$. 	Take $(\xi, 
	h^{\xi}, \nabla^{\xi})$ as 
	the equivariant Atiyah-Hirzebruch direct image of $(\mu, h^{\mu}, 
	\nabla^{\mu})$ as in Section 2.3.
	We only need to notice that $\mV|_{X^g}=0$ in this case. It implies that 
	$\gamma_g^X(\mF_{X^g},\mF_X)=0$.
\end{proof}

\begin{rem}
	Note that in \cite{MR2602854}, the authors establish an index theorem 
	for differential K-theory. The key analytical tool is the Bismut-Zhang 
	embedding formula of the reduced eta invariants in \cite{MR1214954}. 
	Using Corollary \ref{bl1002},  the index theorem there could be extended 
	to the equivariant case whenever the equivariant differential K-theory 
	is well-defined. Using Theorem \ref{bl0563}, we can also get the 
	compatibility of the push-forward map in equivariant differential 
	K-theory 
	along the proper submersion and the embedding under the model of 
	Bunck-Schick \cite{Bunke2009,MR3182258,Liu2016}. We will study these in 
	the subsequent paper.
\end{rem}

\section{Proof of Theorem \ref{bl0563}}

In this section, we prove our
main result Theorem \ref{bl0563}. In Section 3.1, we prove Theorem 
\ref{bl0563} 
when the base space is a point using some intermediary 
results along the lines of \cite{MR1214954}, the proof of which rely on 
almost 
identical arguments of \cite{MR1316553,MR1214954}. In Sections 3.2, we 
explain how to use the functoriality to reduce Theorem \ref{bl0563} to the 
case in Section 3.1. 

\subsection{Embedding of equivariant eta invariants}

 In this 
subsection, we will prove our main result when $B$ is a point and $\dim X$ 
is odd. Recall that in (\ref{bl0960}), we already assume that $Y$ is totally 
geodesic in $X$.

\begin{thm}\label{bl0981} 
	Assume that $B$ is a point and $\dim X$ is odd. 
Then there exists $T_0>2$ such that
for any $T\geq T_0$, we have 
\begin{multline}\label{bl0982}
\tilde{\eta}_g(\mF_X, \mA_{X})=\tilde{\eta}_g(\mF_Y, 
\mathcal{A}_Y)+\int_{X^g}\widehat{\mathrm{A}}_g(TX, 
\nabla^{TX})\,\gamma_g^X(\mF_Y,\mF_X)
\\
+\ch_g(\mathrm{sf}_G\{D(\mF_X)+\mA_X, D(\mF_{X}) 
+T\mathcal{V}+\mA_{T,Y}\})
.
\end{multline}
\end{thm}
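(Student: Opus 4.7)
The plan is to reduce the statement to a comparison of eta forms on $\mF_X$ with two different perturbation operators and then carry out a Bismut--Zhang style analytic localization in $T$. Since both $\mA_X$ and $T\mV+\mA_{T,Y}$ are perturbation operators of $D(\mF_X)$ (for $T\geq T_0$) on the \emph{same} equivariant geometric family, the anomaly formula of Theorem \ref{e01105} forces the Chern--Simons integral to vanish and yields
\begin{align*}
\tilde{\eta}_g(\mF_X,\mA_X) - \tilde{\eta}_g(\mF_X, T\mV+\mA_{T,Y}) = \ch_g\bigl(\mathrm{sf}_G\{D(\mF_X)+\mA_X,\, D(\mF_X)+T\mV+\mA_{T,Y}\}\bigr).
\end{align*}
This reduces (\ref{bl0982}) to establishing, for all sufficiently large $T$,
\begin{align*}
\tilde{\eta}_g(\mF_X,\, T\mV+\mA_{T,Y}) = \tilde{\eta}_g(\mF_Y,\mA_Y) + \int_{X^g}\widehat{\mathrm{A}}_g(TX,\nabla^{TX})\,\gamma_g^X(\mF_Y,\mF_X).
\end{align*}
Since the left-hand side does not depend on $T$, this is equivalent to a $T\to\infty$ limit identity.

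I would then use the explicit formula (\ref{i19}) for the odd eta form built from the perturbed superconnection with $T\mV+\mA_{T,Y}$ and split the integration in $u$ at a suitable threshold (e.g.\ $u=1$) into a ``small-$u$'' and a ``large-$u$'' piece. On the small-$u$ part, the change of variables $u\mapsto u/\sqrt{T}$, together with the identity (\ref{bl0546}) for the $T$-derivative of the twisting curvature of $\nabla^{\mE_X,T}$ and the uniform localization estimate of Theorem \ref{bl0382}, converts the integrand into a transgression of the localized relative Chern character. Integrating this transgression against $\widehat{\mathrm{A}}_g(TX,\nabla^{TX})$ along $X^g$ and passing $T\to\infty$ produces precisely $\int_{X^g}\widehat{\mathrm{A}}_g(TX,\nabla^{TX})\,\gamma_g^X(\mF_Y,\mF_X)$ by Definition \ref{bl0554}.

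The large-$u$ part contains the substantive analytic work, and here I would follow the Bismut--Lebeau analytic localization of \cite{MR1188532} adapted to the eta setting of \cite{MR1214954,MR1316553}. The strategy is to rescale the normal coordinate $Z\mapsto Z/\sqrt{T}$ in $N_{Y/X}$ so that the leading behaviour of $D(\mF_X)+T\mV$ becomes a harmonic oscillator along the normal fibres. The fundamental assumption (\ref{bl1011})--(\ref{bl1013}) identifies the kernel of this oscillator with $\pi_N^*\mE_Y$ as a Clifford module, and the isometric embedding $J_{T,b}$ of Proposition \ref{bl0676} conjugates $\mA_{T,Y}$ onto this kernel to act as $\mA_Y$. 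Consequently, in the resolvent sense, $D(\mF_X)+T\mV+\mA_{T,Y}$ restricted to the low-energy subspace converges to $D(\mF_Y)+\mA_Y$ while the complementary spectral part has a gap growing with $T$; this yields pointwise convergence of the large-$u$ integrand to that of $\tilde{\eta}_g(\mF_Y,\mA_Y)$.

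The main obstacle is obtaining the uniform-in-$(u,T)$ heat-kernel estimates that justify taking $T\to\infty$ under the integral and that glue the small- and large-$u$ regimes across the threshold; one must additionally track how the $g$-action interacts with the Getzler-type rescaling along $W^g\subset V^g$ so that the normal-bundle harmonic-oscillator computation reproduces the equivariant factor $\widehat{\mathrm{A}}_g^{-1}(N_{Y/X},\nabla^{N_{Y/X}})$ demanded by Theorem \ref{bl0558}. These estimates are the equivariant, $\mA$-perturbed analogues of \cite[\S8--9]{MR1188532} and \cite[\S5]{MR1316553}; the assumption (\ref{bl1013}) on $\nabla^{\ker\mV}$ ensures that the constant term in the Getzler expansion produces precisely $\ch_g(\mE_Y/\mS,\nabla^{\mE_Y})$ rather than a connection-dependent correction, so that assembling the two regimes and invoking Theorem \ref{bl0558} gives the desired identity.
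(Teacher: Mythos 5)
The anomaly-formula reduction in your first step is sound and matches what the paper does implicitly: Theorem~\ref{bl0588} identifies the contribution of the top side~$\Gamma_2$ of its contour with $\tilde{\eta}_g(\mF_X,\mA_X)-\tilde{\eta}_g(\mF_X,T_0\mV+\mA_{T_0,Y})$, which by Theorem~\ref{e01105}(a) is exactly the spectral-flow Chern character. Your large-$u$ analysis (localization via $J_{T}$ and the Bismut--Lebeau estimates so that $D(\mF_X)+T\mV+\mA_{T,Y}$ converges on the low-energy subspace to $D(\mF_Y)+\mA_Y$) is also the right ingredient and corresponds to the side $\Gamma_1$ of the contour (Theorem~\ref{bl0584}). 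The real gap is in the small-$u$ analysis.

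Concretely: with the cutoff $\chi$ in formula~(\ref{i19}), the perturbed superconnection for $u<1$ is just $uD(\mF_X)$, so $\int_0^1$ is \emph{independent of $T$} and cannot produce $\int_{X^g}\widehat{\mathrm{A}}_g\gamma_g^X$ no matter what change of variable you apply. Even if you replace this by the cutoff-free form $u(D(\mF_X)+T\mV)+u\chi(u)\mA_{T,Y}$, the pointwise limit of the integrand on $(0,1]$ as $T\to\infty$ is still $\beta_g^Y(u)$ (localization kills everything off~$Y$), and the Bismut--Zhang current only appears through the \emph{failure} of dominated convergence in a boundary layer at scale $u\sim T^{-1/2}$. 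A split at a fixed threshold $u=1$, with a single substitution $u\mapsto u/\sqrt{T}$, does not isolate this layer; one needs a three-scale decomposition (negligible part, boundary-layer part, bulk part). Moreover, the transgression identity~(\ref{bl0546}) you invoke is a $\partial/\partial T$--identity, while the eta integrand involves a $\partial/\partial u$--derivative; relating the two requires precisely the statement that the one-form $\beta_g=du\wedge\beta_g^u+dT\wedge\beta_g^T$ is closed in the $(T,u)$--plane (equation~(\ref{bl0577})). This is why the paper runs a Stokes-type argument over the rectangle $\Gamma_{\var,A,T_1}$, reading off the four terms of~(\ref{bl0982}) from the four sides, and handles the boundary-layer regime by the decomposition $K_1+K_2+K_3$ of the bottom side~$\Gamma_4$ (Theorem~\ref{e03022}). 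Your outline misses these two points: the need for a multi-scale (not a fixed-threshold) analysis near $u=0$, and the need for a $\beta_g$--closed-form/Stokes mechanism to pass from a $u$--integral to a $T$--transgression.
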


Set
\begin{align}\label{bl0571}
D_{u,T}=\sqrt{u}(D(\mF_X)+T\mathcal{V}+\chi(\sqrt{u})((1-\chi(T))\mA_X
+\chi(T)\mathcal{A}_{T,Y})),
\end{align}
where $\chi$ is the cut-off function defined in (\ref{d013}).
Let
\begin{align}\label{bl0572}
\mB_{u^2,T}=D_{u^2,T}+dT\wedge\frac{\partial}{\partial 
T}+du\wedge\frac{\partial}{\partial u}.
\end{align}

\begin{defn}\label{bl0573}
	We define $\beta_g=du\wedge \beta_g^u+dT\wedge \beta_g^T$ to be the part 
	of $\pi^{-1/2}\tr_s[g\exp(-\mB_{u^2,T}^2)]$
	of degree one with respect to the coordinates $(T,u)$, with functions 
	$\beta_g^u$, $\beta_g^T: \mathbb{R}_{+,T}\times\mathbb{R}_{+,u}
	\rightarrow \R$.
\end{defn}
From (\ref{bl0572}), we have
\begin{align}\label{bl0574}
\begin{split}
&\beta_g^u(T,u)=
-\frac{1}{\sqrt{\pi}}\tr_s\left[g\frac{\partial D_{u^2,T}}{\partial 
u}\exp(-D_{u^2,T}^2)\right],
\\
&\beta_g^T(T,u)=
-\frac{1}{\sqrt{\pi}} \tr_s\left[g\frac{\partial D_{u^2,T}}{\partial 
T}\exp(-D_{u^2,T}^2)\right].
\end{split}
\end{align}
When $0<u<1$, $\chi(u)=0$. In this case,
\begin{align}\label{bl0700}
\begin{split}
&\beta_g^u(T,u)=
-\frac{1}{\sqrt{\pi}}\tr_s\left[g(D(\mF_X)+T\mathcal{V})\exp(-u^2(D(\mF_X)+T\mathcal{V})^2)\right],
\\
&\beta_g^T(T,u)=
-\frac{u}{\sqrt{\pi}} 
\tr_s\left[g\mathcal{V}\exp(-u^2(D(\mF_X)+T\mathcal{V})^2)\right].
\end{split}
\end{align}
From (\ref{i19}), 
\begin{align}\label{bl0575}
\tilde{\eta}_g(\mF_X, \mA_X)=-\int_0^{+\infty}\beta_g^u(0,u)du.
\end{align}

As in \cite[Theorem 3.4]{MR1214954} (see also \cite[Proposition 
4.2]{MR3626562}),
we have
\begin{align}\label{bl0577}
\left(du\wedge\frac{\partial}{\partial u}+dT\wedge\frac{\partial}{\partial 
T}\right)\beta_g=0.
\end{align}

Let $T_0$ be the constant in Proposition \ref{bl0676}.
Take $\var, A, T_1$, $0<\var< 1\leq A<\infty$, $T_0\leq T_1<\infty$. Let 
$\Gamma=\Gamma_{\var,A,T_1}$ be the oriented contour in
$\mathbb{R}_{+,T}\times\mathbb{R}_{+,u}$.

\

\begin{center}\label{bl0580}
	\begin{tikzpicture}
	\draw[->][ -triangle 45] (-0.25,0) -- (5.5,0);
	\draw[->][ -triangle 45] (0,-0.25) -- (0,3.5);
	\draw[->][ -triangle 45] (1,0.5) -- (2.5,0.5);
	\draw (0,0.5) -- (4,0.5);
	\draw[->][ -triangle 45] (0,3) -- (0,1.5);
	\draw (0,1.5) -- (0,0.5);
	\draw[->][ -triangle 45] (4,0.5) -- (4,2);
	\draw (4,2) -- (4,3);
	\draw[->][ -triangle 45] (4,3) -- (2.5,3);
	\draw (2.5,3) -- (0,3);
	\draw[dashed] (1,0) -- (1,3);
	\draw[dashed] (4,0) -- (4,0.5);
	\foreach \x in {0}
	\draw (\x cm,1pt) -- (\x cm,1pt) node[anchor=north east] {$\x$};
	\draw
	(2.5,1.75)  node {$\mU$}(2.5,1.75);
	\draw
	(0,3.5)  node[anchor=west] {$u$}(0,3.5);
	\draw
	(5.5,0)  node[anchor=west] {$T$}(5.5,0);
	\draw
	(0,0.5)  node[anchor=east] {$\varepsilon$}(0,0.5);
	\draw
	(0,3)  node[anchor=east] {$A$}(0,3);
	\draw
	(1,0)  node[anchor=north] {$T_0$}(1,0);
	\draw
	(4,0)  node[anchor=north] {$T_1$}(4,0);
	\draw
	(4,1.75)  node[anchor=west] {\small{$\Gamma_1$}}(4,1.75);
	\draw
	(2.5,0.5)  node[anchor=north] {\small{$\Gamma_4$}}(2.5,0.5);
	\draw
	(2.5,3)  node[anchor=south] {\small{$\Gamma_2$}}(2.5,3);
	\draw
	(0.75,1.75)  node[anchor=east] {\small{$\Gamma_3$}}(0.75,1.75);
	\draw
	(4,3)  node[anchor=south west] {\small{$\Gamma$}}(4,3);
	\end{tikzpicture}
\end{center}
\

The contour $\Gamma$ is made of four oriented pieces 
$\Gamma_1,\cdots,\Gamma_4$ indicated in the above picture.
For $1\leq k\leq 4$,
set $I_k^0=\int_{\Gamma_k}\beta_g$. Then by Stocks' formula and  
(\ref{bl0577}),
\begin{align}\label{bl0581}
\sum_{k=1}^4I_k^0=\int_{\partial 
\mU}\beta_g=\int_{\mU}\left(du\wedge\frac{\partial}{\partial u}
+dT\wedge\frac{\partial}{\partial T}\right)\beta_g=0.
\end{align}

For any $g\in G$, set
\begin{align}\label{bl0582}
\beta_g^Y(u)=\frac{1}{\sqrt{\pi}}\tr\left[g\exp\left(-\left(u(D(\mF_Y)+\chi(u)\mathcal{A}_Y)+du\wedge
 \frac{\partial}{\partial u}\right)^2\right)\right]^{du}.
\end{align}
Then by Definition \ref{d017},
\begin{align}\label{bl0583}
\tilde{\eta}_g(\mF_Y, \mathcal{A}_Y)=-\int_0^{+\infty}\beta_g^Y(u)du.
\end{align}

We now establish some estimates of $\beta_g$.

\begin{thm}\label{bl0584}
	i) For any $u>0$, we have
	\begin{align}\label{bl0585}
	\lim_{T\rightarrow \infty}\beta_g^u(T,u)=\beta_g^Y(u).
	\end{align}

	ii) For $0<u_1<u_2$ fixed, there exists $C>0$ such that, for $u\in 
	[u_1,u_2]$, $T\geq 2$, we have
	\begin{align}\label{bl0586}
	|\beta_g^u(T,u)|\leq C.
	\end{align}
	
	iii) We have the following identity:
	\begin{align}\label{bl0587}
	\lim_{T\rightarrow +\infty} 
	\int_{2}^{\infty}\beta_g^u(T,u)du=\int_{2}^{\infty}\beta_g^Y(u)du.
	\end{align}
\end{thm}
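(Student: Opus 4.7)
The plan is to prove Theorem \ref{bl0584} by adapting the analytical localization arguments of \cite[Theorem 3.5]{MR1214954} and \cite[Chapters 8--10]{MR1188532} to the equivariant setting, taking advantage of the fact that $g$ commutes with every operator in sight and that $V^g$ embeds in $V$ with equivariant Spin$^c$ normal bundle satisfying the obvious analogues of the fundamental assumption. The core input is that, for $T$ large, the Bismut--Lebeau model allows us to compare $D(\mF_X)+T\mV+\mA_{T,Y}$ to the pushed-forward operator $J_T(D(\mF_Y)+\mA_Y)J_T^{-1}$, where $J_T$ is the isometric embedding built in the proof of Proposition \ref{bl0676}.

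For part (i), I would fix $u>0$ and split into cases $u\le 1$ (where $\chi(\sqrt{u})=0$ so the perturbation is absent) and $u\ge 2$ (where $\chi(T)=1$ for $T\ge T_1$ so the relevant perturbation is $\mA_{T,Y}$). In both cases, the strategy is to use the resolvent estimates of \cite[Theorems 9.8, 9.10, 9.11]{MR1188532}, extended equivariantly as in \cite[Section 4]{MR3626562}, to show that $\exp(-u^2(D(\mF_X)+T\mV+\mA_{T,Y})^2)$ converges in trace class norm to $J_T\exp(-u^2(D(\mF_Y)+\mA_Y)^2)J_T^{-1}$ as $T\to\infty$. The identification with $\beta_g^Y(u)$ then uses the fundamental assumption (\ref{bl1011})--(\ref{bl1013}), together with the local decomposition $\mS_X|_W\simeq \mS_Y\,\widehat\otimes\,\mS_N$ (here via $\mE_X|_W\supset \ker\mV\simeq \Lambda(N_\C^*)\,\widehat\otimes\,\mE_Y$): the $\Z_2$-grading of the normal factor collapses the supertrace $\tr^{\mE_X/\mS}$ to the trace on $\mE_Y$ that appears in $\beta_g^Y(u)$, while the localisation near $W^g$ together with the identity $\ch_g(\Lambda(N_\C^*)/\mS,\nabla)\cdot\widehat{\mathrm{A}}_g^{-1}(N_{Y/X})=1$ cancels all normal bundle contributions.

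For parts (ii) and (iii), I would use the uniform spectral gap produced by Proposition \ref{bl0676}: there exists $\lambda_0>0$ such that $|\mathrm{spec}(D(\mF_X)+T\mV+\mA_{T,Y})|\ge \lambda_0$ for every $T\ge T_0$. Writing
\begin{equation*}
D_{u^2,T}\exp(-D_{u^2,T}^2)=\bigl(D_{u^2,T}e^{-D_{u^2,T}^2/2}\bigr)\,e^{-D_{u^2,T}^2/2},
\end{equation*}
the first factor is uniformly bounded in operator norm by $C/u$ and the second factor is trace class with $\tr|\,e^{-D_{u^2,T}^2/2}\,|\le C'\, e^{-u^2\lambda_0^2/4}\tr|\,e^{-D_{1,T}^2/2}\,|$, the latter trace being uniformly bounded in $T\ge T_0$ by standard heat kernel volume estimates. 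This yields a Gaussian decay $|\beta_g^u(T,u)|\le C\exp(-c u^2\lambda_0^2)$ that is uniform in $T$, which both bounds $\beta_g^u$ on compact $u$-intervals (part (ii), after dealing with the transition region $2\le T\le T_1$ by a compactness argument and parabolic regularity) and provides the integrable dominant needed to pass $T\to\infty$ under $\int_2^\infty$ in part (iii), with $\beta_g^Y(u)$ itself satisfying the same decay from invertibility of $D(\mF_Y)+\mA_Y$.

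The main obstacle is the trace-class convergence in part (i): one must justify that the perturbation $\mA_{T,Y}$, although only pseudodifferential and only known to behave well through the conjugation $J_T(\cdot)J_T^{-1}$, still fits into the Bismut--Lebeau localisation scheme. Here the essential point is that $\mA_{T,Y}$ has $L^2$-norm bounded independently of $T$ (Proposition \ref{bl0676}), so it is a bounded perturbation that cannot destroy the $T\to\infty$ concentration onto $\mathrm{Im}\,J_T$ driven by the large potential $T\mV$; this allows the arguments of \cite[Section 9]{MR1188532} to be applied to $D(\mF_X)+T\mV+\mA_{T,Y}$ with $D(\mF_Y)+\mA_Y$ in place of $D(\mF_Y)$, as already used in the proof of Proposition \ref{bl0676}.
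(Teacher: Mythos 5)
Your overall blueprint — localize via the Bismut--Lebeau estimates, using $J_T$ and the uniform bound on $\mA_{T,Y}$ to compare $D(\mF_X)+T\mV+\mA_{T,Y}$ with $J_T(D(\mF_Y)+\mA_Y)J_T^{-1}$ — is the right one and matches the paper for the extreme cases. But there is a genuine gap: you never handle the range of $u$ where the cut-off $\chi(u)$ sits strictly between $0$ and $1$. For $\beta_g^u(T,u)$ the perturbation entering $D_{u^2,T}$ is $\chi(u)\mA_{T,Y}$ (not $\mA_{T,Y}$), so for $1<u<2$ the relevant operator is $D(\mF_X)+T\mV+\delta\mA_{T,Y}$ with $\delta=\chi(u)\in(0,1)$, and its small eigenvalues converge to those of $D(\mF_Y)+\delta\mA_Y$, which can have a zero eigenvalue. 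In that range there is \emph{no} uniform spectral gap, so your Gaussian decay argument for (ii) breaks down; moreover your case split ``$u\le 1$ or $u\ge 2$'' for part (i) does not even state a comparison for the transition range. The paper's crucial device is precisely this: it introduces the set $\mathcal{T}=\{\delta\in[0,1]:\ D(\mF_Y)+\delta\mA_Y\ \text{not invertible}\}$, proves the comparison estimate (\ref{bl1016}) on a small $\delta$-neighborhood of any $\delta_0\in\mathcal{T}$, and then uses compactness of $\mathcal{T}$ plus a uniform lower bound on the spectrum away from $\mathcal{T}$ to get (\ref{bl1016}) \emph{uniformly} in $\delta\in[0,1]$. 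That uniform estimate is what simultaneously delivers (i), (ii), and the uniform bound on the constant $C_T$ needed for the dominated convergence in (iii); it has no counterpart in your proposal.

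A secondary point: for part (i) you invoke a Chern character/$\widehat{\mathrm{A}}$ cancellation on the normal bundle. That identity is a small-time, local index density statement and is not used here. Part (i) is a fixed-$u$, $T\to\infty$ \emph{global} spectral comparison coming from the Bismut--Lebeau operator estimates and the isometry $J_T$; once those are in place the identification of the limit with $\beta_g^Y(u)$ is immediate from the definitions, with no local cancellation needed. The fundamental assumption (\ref{bl1011})--(\ref{bl1013}) is already absorbed into the construction of $J_T$ and the estimates of \cite[Theorems 9.8--9.11]{MR1188532}; invoking it again at the level of Chern characters suggests you are conflating the $T\to\infty$ limit (Theorem \ref{bl0584}) with the $u\to 0$ limit (Theorem \ref{e03022}).
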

\begin{proof}
If $P$ is an operator, let $\mathrm{Spec}(P)$ be the spectrum of $P$. 
From the proof of Proposition \ref{bl0676}, 
there exist $T_0\geq 1$, $c>0$, such that	
	for $T\geq T_0$,
	\begin{align}\label{bl0314}
	\mathrm{Spec}(D(\mF_X)+T\mathcal{V}+ \mA_{T,Y})\cap [-c,c] =\emptyset.
	\end{align}

Recall that $\mathbb{E}_T^0$ is the image of $J_T$ defined in (\ref{bl0983}).
For $\delta\in[0,1]$, we write $D(\mF_X)+T\mathcal{V}+\delta \mA_{T,Y}$ in 
matrix form with respect to the splitting by
$\mathbb{E}_T^0\oplus \mathbb{E}_T^{0,\bot}$,
\begin{align}\label{bl0233}
D(\mF_X)+T\mathcal{V}+\delta \mA_{T,Y}=\left(
\begin{array}{cc}
A_{T,1}+\delta \mA_{T,Y} & A_{T,2} \\
A_{T,3} & A_{T,4} \\
\end{array}
\right).
\end{align}
By \cite[Theorem 9.8]{MR1188532} and (\ref{bl0232}),
as $T\rightarrow +\infty$, we have
\begin{align}\label{bl0234}
J_T^{-1}(A_{T,1}+\delta \mA_{T,Y})J_T=D(\mF_Y)+\delta 
\mA_Y+O\left(\frac{1}{\sqrt{T}}\right).
\end{align}

Set
\begin{align}
\mathcal{T}:=\{\delta\in[0,1]: D(\mF_Y)+\delta\mA_Y\ \text{is not 
invertible} \}.
\end{align}
Then $\mathcal{T}$ is a closed subset of $[0,1]$.

We firstly assume that $\mT$ is not empty.
Fix $\delta_0\in\mathcal{T}$. There exists $C(\delta_0)>0$ such that 
\begin{align}
\mathrm{Spec}(D(\mF_Y)+\delta_0\mA_Y)\cap [ 
-2C(\delta_0),2C(\delta_0)]=\{0\}.
\end{align}
Since the eigenvalues are continuous with respect to $\delta$, there exists 
$\var>0$ small enough, such that when $\delta\in 
(\delta_0-\var,\delta_0+\var)$, 
\begin{align}
\mathrm{Spec}(D(\mF_Y)+\delta\mA_Y)\cap [-C(\delta_0),C(\delta_0)]\subset 
(-C(\delta_0)/4,C(\delta_0)/4)
\end{align} 
and
\begin{multline}
\mathrm{Spec}(D(\mF_Y)+\delta\mA_Y)\cap (-\infty, -C(\delta_0)]\cup 
[C(\delta_0),+\infty)
\\
\subset (-\infty, -7C(\delta_0)/4)\cup (7C(\delta_0)/4,+\infty).
\end{multline} 
Then following the same process in \cite[Section 9]{MR1188532} and 
\cite[Section 4 b)]{MR1214954} by replacing 
$D(\mF_Y)$ and $D(\mF_X)+T\mathcal{V}$ by $D(\mF_Y)+\delta\mA_Y$ and 
$D(\mF_X)+T\mathcal{V}+\delta \mA_{T,Y}$, for $\alpha>0$ fixed, when $T$ is 
large enough, there exists $C>0$, such that for any $\delta\in 
(\delta_0-\var,\delta_0+\var)$, 
\begin{align}\label{bl1016} 
\begin{split}
&\left|\tr_s\left[g(D(\mF_X)+T\mathcal{V}+\delta \mA_{T,Y})\exp\left(-\alpha 
(D(\mF_X)+T\mathcal{V}+\delta \mA_{T,Y})^2 \right) \right]\right.
\\
&\quad\quad\quad\quad\quad-\tr\left.\left[g(D(\mF_Y)+\delta\mA_Y)\exp\left(-\alpha
 (D(\mF_Y)+\delta\mA_Y)^2 \right) \right] \right|\leq \frac{C}{\sqrt{T}},
\\
&\left|\tr_s\left[g\mA_{T,Y}\exp\left(-\alpha (D(\mF_X)+T\mathcal{V}+\delta 
\mA_{T,Y})^2 \right) \right]\right.
\\
&\quad\quad\quad\quad\quad-\tr\left.\left[g\mA_Y\exp\left(-\alpha
 (D(\mF_Y)+\delta\mA_Y)^2 \right) \right] \right|\leq \frac{C}{\sqrt{T}}.
\end{split}
\end{align}
Since $\mathcal{T}$ is compact, there exists an open neighborhood $\mU$ of 
$\mathcal{T}$ in $[0,1]$ such that (\ref{bl1016}) hold uniformly for 
$\delta\in \mU$. For $\delta\in [0,1]\backslash \mU$, there is a uniformly 
lower positive bound  of the absolute value of the spectrum of 
$D(\mF_Y)+\delta\mA_Y$. So the process of \cite[Section 9]{MR1188532} also 
works. It means that (\ref{bl1016}) hold uniformly for $\delta\in [0,1]$. 

If $\mT=\emptyset$, it means that there is a uniformly 
lower positive bound  of the absolute value of the spectrum of 
$D(\mF_Y)+\delta\mA_Y$ for $\delta\in [0,1]$. Thus (\ref{bl1016}) holds 
uniformly.

In summary, for $\alpha>0$ fixed, when $T$ is 
large enough, there exists $C>0$, such that for any $\delta\in 
[0,1]$, (\ref{bl1016}) holds. 

Therefore, from Definition \ref{bl0573}, (\ref{bl0571}), (\ref{bl0572}) and 
(\ref{bl0582}), we get the proof of Theorem \ref{bl0584} i) and ii).

For $u\geq 2$ and $T\geq T_0$, from Definition \ref{bl0573}, (\ref{bl0571}) 
 and (\ref{bl0572}), we have
\begin{multline}\label{bl0984} 
\beta_g^u(T,u)=
-\frac{1}{\sqrt{\pi}}\tr_s\left[g(D(\mF_X)+T\mathcal{V}+\mA_{T,Y})\right.
\\
\left.
\exp(-u^2(D(\mF_X)+T\mathcal{V}+\mA_{T,Y})^2\right].
\end{multline}
From \cite[Proposition 2.37]{MR2273508}, (\ref{bl0314}) and (\ref{bl0984}), 
there exists $C_T>0$, depending on $T\geq T_0$, such that for $u$ large 
enough,
\begin{align}\label{bl0985} 
|\beta_g^u(T,u)|\leq C_T\exp(-cu^2).
\end{align}
From the first inequality of (\ref{bl1016}) for $\delta=1$, we see that 
$C_T$ in (\ref{bl0985}) is uniformly bounded. Thus iii) follows from i) and 
the 
dominated convergence theorem.

The proof of our theorem is completed. 
\end{proof}

\begin{thm}\label{bl0588}
	Let $T_0$ be the constant in Proposition \ref{bl0676}. When 
	$u\rightarrow +\infty$, we have
	\begin{align}\label{bl0589}
	\lim_{u\rightarrow +\infty} \int_{0}^{T_0}\beta_g^T(T,u)dT
	=\ch_g(\mathrm{sf}_G\{D(\mF_X)+\mA_X, D(\mF_{X}) 
	+T_0\mathcal{V}+\mA_{T_0,Y}\}).
	\end{align}
	and
	\begin{align}\label{bl0590}
	\lim_{u\rightarrow +\infty} \int_{T_0}^{\infty}\beta_g^T(T,u)dT=0.
	\end{align}
\end{thm}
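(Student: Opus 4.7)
The plan is to exploit that for $u \geq 2$ one has $\chi(u) = 1$, so that
$$D_{u^2,T} = u\,\tilde D_T, \qquad \tilde D_T := D(\mF_X) + T\mV + (1-\chi(T))\mA_X + \chi(T)\mA_{T,Y}$$
is a smooth path of $G$-invariant self-adjoint operators joining $\tilde D_0 = D(\mF_X)+\mA_X$ to $\tilde D_{T_0} = D(\mF_X)+T_0\mV+\mA_{T_0,Y}$, both of which are invertible (the first by hypothesis, the second by Proposition \ref{bl0676}, provided $T_0$ is chosen large enough). I handle the two limits separately.

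For (\ref{bl0589}), substituting into (\ref{bl0574}) gives, for every $u \geq 2$,
$$\int_0^{T_0}\beta_g^T(T,u)\,dT = -\frac{u}{\sqrt{\pi}}\int_0^{T_0}\tr_s\!\left[g\,\dot{\tilde D}_T\,\exp(-u^2\tilde D_T^2)\right]dT.$$
This is exactly the heat-kernel representative, as in Dai-Zhang \cite{MR1638328} and \cite[Definition 2.5]{Liu2016}, of the $g$-equivariant Chern character of the spectral flow of the path $\{\tilde D_T\}_{T\in[0,T_0]}$. At parameters where $\tilde D_T$ is invertible, $\tr_s[g\,\dot{\tilde D}_T\,e^{-u^2\tilde D_T^2}]$ is $O(u^{-1}e^{-c_T u^2})$; all of the contribution as $u\to\infty$ comes from the finitely many $T$'s at which $\tilde D_T$ has kernel, where the standard local calculation (as in Atiyah-Patodi-Singer's derivation of the spectral flow formula) produces, for each simple crossing, an equivariant character on the corresponding eigenspace. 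Summing the crossings along the path gives $\ch_g(\mathrm{sf}_G\{D(\mF_X)+\mA_X,\,D(\mF_X)+T_0\mV+\mA_{T_0,Y}\})$; a generic perturbation argument combined with homotopy-invariance of the spectral flow rel invertible endpoints handles any non-generic path.

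For (\ref{bl0590}), on $T \geq T_0$ one has $\chi(T)=1$ and $\chi'(T)=0$, so
$$\beta_g^T(T,u) = -\frac{u}{\sqrt{\pi}}\tr_s\!\left[g(\mV+\partial_T\mA_{T,Y})\exp(-u^2(D(\mF_X)+T\mV+\mA_{T,Y})^2)\right].$$
By Proposition \ref{bl0676}, the spectrum of $D(\mF_X)+T\mV+\mA_{T,Y}$ is bounded away from zero uniformly in $T \geq T_0$ by some $c>0$, so the heat operator is controlled in operator norm by $e^{-c^2 u^2}$. Combined with the standard Bismut-Lebeau uniform trace estimates of \cite{MR1188532,MR1214954}, which additionally provide integrable $T$-decay (for large $T$ the term $T^2\mV^2$ in the squared operator concentrates the heat kernel near $W$, so the trace of the heat kernel converges to a finite limit as $T\to\infty$ while the factor $\mV$ is controlled by its pointwise bound), one obtains an estimate of the form $|\beta_g^T(T,u)| \leq u\,\phi(T)\,e^{-c^2u^2/2}$ with $\phi \in L^1([T_0,\infty))$. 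Dominated convergence then yields (\ref{bl0590}).

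The main obstacle is the uniform-in-$T$ trace estimate needed for part (ii): one must balance the $u$-decay coming from the uniform spectral gap against the concentration of the heat kernel of $(D(\mF_X)+T\mV+\mA_{T,Y})^2$ near $W$ as $T\to\infty$. This is precisely the content of the analytic localization technique of Bismut-Lebeau \cite{MR1188532} and Bismut-Zhang \cite{MR1214954}, so the proof reduces to transferring those estimates to our equivariant setting with $D(\mF_Y)$ replaced by $D(\mF_Y)+\mA_Y$, exactly as already done in the proof of Proposition \ref{bl0676}.
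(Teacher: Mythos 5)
Your proposal follows essentially the same approach as the paper's proof: both parts are reduced to previously established analytic results. For (\ref{bl0589}), the paper simply invokes the anomaly formula Theorem~\ref{e01105} (with vanishing Chern--Simons term since the geometric family $\mF_X$ is unchanged along the path), identifying the limit with $\tilde\eta_g(\mF_X,\mA_X)-\tilde\eta_g(\mF_X,T_0\mV+\mA_{T_0,Y})=\ch_g(\mathrm{sf}_G\{\cdot,\cdot\})$, whereas you re-derive the $u\to\infty$ limit directly via the Atiyah--Patodi--Singer crossing argument; these are equivalent, and your route reproves one step of that anomaly formula rather than citing it. For (\ref{bl0590}), the paper quotes the estimate $|\beta_g^T(T,u)|\leq C\,T^{-1-\delta}e^{-cu^2}$ from \cite{Liu2016} and \cite[(6.8)]{MR3626562}, which is exactly the combination of the uniform spectral gap (giving the exponential decay in $u$) and the Bismut--Lebeau localization (giving integrable decay in $T$) that you describe. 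Two small imprecisions worth noting, neither of which affects the conclusion: the claimed $O(u^{-1}e^{-c_Tu^2})$ bound at invertible parameters has no evident source for the extra $u^{-1}$ factor, though none is needed since the exponential already beats the prefactor of $u$ carried by $\beta_g^T$; and your justification of the $L^1$-in-$T$ bound (``the trace of the heat kernel converges to a finite limit'') only yields boundedness, not integrability --- the genuinely nontrivial $T^{-1-\delta}$ decay is precisely what the Bismut--Lebeau analysis supplies, and you correctly point to those references for it.
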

\begin{proof} 
Set
	\begin{align}
	D_{u,T}'=\sqrt{u}(D(\mF_X)+\chi(\sqrt{u})(T\mathcal{V}+((1-\chi(T))\mA_X+\chi(T)\mathcal{A}_{T,Y})))
	\end{align}
	and
\begin{align}
	\beta_g^T(T,u)'=
	-\frac{1}{\sqrt{\pi}}\tr_s\left[g\frac{\partial D_{u^2,T}'}{\partial 
	T}\exp(-(D_{u^2,T}')^2)\right].
\end{align}
	
	Note that when $u>2$, 
	\begin{align}
	\beta_g^T(T,u)'=\beta_g^T(T,u).
	\end{align}
	The proof of the anomaly formula Theorem \ref{e01105} (cf. \cite[Theorem 
	2.17]{Liu2016}) show that 
	\begin{multline}
	\lim_{u\rightarrow +\infty} 
	\int_{0}^{T_0}\beta_g^T(T,u)dT=\lim_{u\rightarrow +\infty} 
	\int_{0}^{T_0}\beta_g^T(T,u)'dT
	\\=\tilde{\eta}_g(\mF_X, \mA_{X})-\tilde{\eta}_g(\mF_X, 
	T_0\mathcal{V}+\mA_{T_0,Y})
	\\
	=\ch_g(\mathrm{sf}_G\{D(\mF_X)+\mA_X, D(\mF_{X}) 
	+T_0\mathcal{V}+\mA_{T_0,Y}\}). 	
	\end{multline}
	
	Since $D(\mF_{X}) +T\mathcal{V}+\mA_{T,Y}$ is invertible for $T\geq T_0$,
	the proof of (\ref{bl0590}) is the same as \cite[Theorem 2.22]{Liu2016}. 
	In fact, as in \cite[(6.8)]{MR3626562}, for $u'>0$ fixed, there exist 
	$C>0$, $T'\geq T_0$ and $\delta>0$ 
	such that for $u\geq u'$ and $T\geq T'$, we have
	\begin{align}\label{bl0986} 
	|\beta_g^T(T,u)|\leq \frac{C}{T^{1+\delta}}\exp(-cu^2).
	\end{align}

The proof of Theorem \ref{bl0588} is completed.	
\end{proof}

\begin{thm}\label{e03022}
	i) For any $u\in(0,1]$, there exist $C>0$ and $\delta>0$ such that, for 
	$T$ large enough, we have
	\begin{align}\label{e03023}
	|\beta_g^T(T,u)|\leq\frac{C}{T^{1+\delta}}.
	\end{align}
	
	ii) There exist $C>0$, $\gamma\in (0,1]$ such that for $u\in (0,1]$, 
	$0\leq T\leq u^{-1}$,
	\begin{multline}\label{bl0592}
	\left|u^{-1}\beta_g^T\left(\frac{T}{u}, u\right)+
	\frac{1}{2\sqrt{\pi}\sqrt{-1}}\cdot\frac{2^{\ell_2/2}}{\mathrm{det}^{1/2}
		(1-g|_{N_{X^g/X}})}\int_{X^g}
	\widehat{\mathrm{A}}_g(TX,\nabla^{TX})\right.
	\\
	\left.\cdot\psi_{X^g} 
	\tr^{\mE_X/\mS}\left[\sigma_{\ell_2}(g^{\mE_X})\mathcal{V}|_{X^g}\exp\left(-R_{T^2}^{\mE_X/\mS}|_{X^g}\right)\right]
	\right|
	\leq\frac{C(u(1+T))^{\gamma}}{\sup\{T,1\}}.
	\end{multline}
	
	iii) For any $T>0$,
	\begin{align}\label{bl0594}
	\lim_{u\rightarrow 0}u^{-2}\beta_g^T(T/u^2, u)
	=0.
	\end{align}
	
	iv) There exist $C>0$, $\delta\in (0,1]$ such that for $u\in (0,1]$, 
	$T\geq 1$,
	\begin{align}\label{bl0596}
	\left|u^{-2}\beta_g^T(T/u^2, u)\right|\leq \frac{C}{T^{1+\delta}}.
	\end{align}
\end{thm}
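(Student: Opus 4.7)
The starting point is that for $0 < u \leq 1$ the cut-off $\chi(\sqrt{u})$ in (\ref{bl0571}) vanishes, so by (\ref{bl0700}) we have the explicit formula
\begin{align*}
\beta_g^T(T,u) = -\frac{u}{\sqrt{\pi}}\tr_s\bigl[g\,\mV\exp\bigl(-u^2(D(\mF_X)+T\mV)^2\bigr)\bigr].
\end{align*}
Each of the four assertions describes this supertrace in a different $(u,T)$ regime, and the plan is to follow the analytical localization machinery of \cite{MR1188532,MR1316553,MR1214954}, adding only the equivariant localization factor $\sigma_{\ell_2}(g^{\mE_X})/\det^{1/2}(1-g|_{N_{X^g/X}})$ carried by the trace along $V^g$ as in Theorem \ref{bl0382}.

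For part (i), the spectral estimates behind Proposition \ref{bl0676}, together with the Bismut--Lebeau decomposition $L^2(X_b,\mE_X)=\mathbb{E}_T\oplus\mathbb{E}_T^\bot$ from (\ref{bl0233}), provide $T$-uniform off-diagonal decay of the heat kernel of $D(\mF_X)+T\mV$. The point is that $\mV$ maps $\ker\mV$ into its complement with a gain of $T^{-1/2}$, and the restriction of the operator to $\mathbb{E}_T^\bot$ has spectrum bounded below by $c\sqrt{T}$. Combining these two facts in the same way as \cite[Theorem 5.9]{MR1214954} yields the desired $T^{-1-\delta}$ bound after integration against $\exp(-u^2(\cdot)^2)$ for $u\in(0,1]$.

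For part (ii), the substitution $T\mapsto T/u$ gives
\begin{align*}
u^{-1}\beta_g^T(T/u,u) = -\frac{1}{\sqrt{\pi}}\tr_s\bigl[g\,\mV\exp\bigl(-(uD(\mF_X)+T\mV)^2\bigr)\bigr],
\end{align*}
and the rescaled operator $\mathbb{D}_{u,T}=uD(\mF_X)+T\mV$ satisfies $\mathbb{D}_{u,T}^2 = u^2D(\mF_X)^2+T^2\mV^2+uT\{D(\mF_X),\mV\}$. The parameter $u\to 0$ plays the role of the Getzler rescaling parameter, $T\mV$ furnishes the superconnection deformation of (\ref{bl0381}) at the level $\sqrt{T^2}$, and the equivariant Mehler formula in the normal directions of $N_{X^g/X}$ produces in the limit the twisting curvature $R_{T^2}^{\mE_X/\mS}$ together with the localization factor. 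Proving the uniform convergence rate $C(u(1+T))^\gamma/\sup\{T,1\}$ jointly in $0<u\leq 1$ and $0\leq T\leq u^{-1}$ is the analogue of \cite[Theorem 6.3]{MR1316553}: one splits into the regions near and away from $X^g$, uses finite-propagation speed to localize, and matches the local Getzler expansion against the leading term of $\psi_{X^g}\tr^{\mE_X/\mS}[\sigma_{\ell_2}(g^{\mE_X})\mV|_{X^g}\exp(-R_{T^2}^{\mE_X/\mS}|_{X^g})]$.

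For parts (iii) and (iv), the substitution $T\mapsto T/u^2$ and the identity $u^2(D(\mF_X)+(T/u^2)\mV)^2=(uD(\mF_X)+(T/u)\mV)^2$ give
\begin{align*}
u^{-2}\beta_g^T(T/u^2,u) = -\frac{u^{-1}}{\sqrt{\pi}}\tr_s\bigl[g\,\mV\exp\bigl(-(uD(\mF_X)+(T/u)\mV)^2\bigr)\bigr].
\end{align*}
For (iii), with $T>0$ fixed and $u\to 0$, the factor $T/u$ in front of $\mV$ forces exponential concentration of the integrand onto $\ker\mV=W$, where $\mV$ itself vanishes; a Duhamel expansion in the cross term $\{D(\mF_X),\mV\}$ shows that all surviving contributions are dominated by the algebraic vanishing of $\mV|_W$, so the limit is zero. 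For (iv), the same quantitative estimate of Theorem \ref{bl0382}, applied with $T$ replaced by $T^2/u^2$, together with the uniform control of the Getzler remainder, yields the $T^{-1-\delta}$ bound, parallel to \cite[Theorem 5.9]{MR1214954}. The main obstacle throughout is the uniformity of the Getzler rescaling in the auxiliary parameter $T$ of part (ii); once that is established the other three statements follow by specialization and by the same localization estimates applied on complementary ranges of $T$.
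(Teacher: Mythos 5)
Your proposal follows essentially the same route as the paper: by (\ref{bl0700}) the cut-off kills the perturbation operator for $u\in(0,1]$, and each part is then handled by the analytic localization machinery of Bismut--Lebeau, Bismut, and Bismut--Zhang (Getzler rescaling, Mehler formula, concentration on $W^g$). The paper itself disposes of (i) by citing the previously established estimate (\ref{bl0986}) and of (ii)--(iv) by declaring them word-for-word the proofs of \cite[Theorems~3.10--3.12]{MR1214954} with the reference \cite{MR1188532} replaced by its equivariant counterpart \cite{MR1316553}, observing that the Clifford-module setting causes no new difficulty since everything localizes to a spin situation --- which is exactly what your sketch unpacks in more detail.
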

\begin{proof} 
It is easy to see that i) follows directly from (\ref{bl0986}).
	
Note that in this theorem, $u\in(0,1]$. By (\ref{bl0700}), the perturbation 
operator 
does not appear. So the proof of ii), iii), iv) here 
are totally the same as that of \cite[Theorem 3.10-3.12]{MR1214954} except 
for replacing the reference of \cite{MR1188532} there by the corresponding 
reference of \cite{MR1316553}.

Remark that the setting of this paper uses the language of Clifford modules, 
not the spin case in the references. However, there is no additional 
difficulty for this differences. The reason is that in each proof of Theorem 
\ref{e03022} ii), iii), iv), we localize the problem first. Locally, all 
manifolds are spin. 

The proof of Theorem \ref{e03022} is completed.
\end{proof}

Now we use the estimates in Theorem \ref{bl0584}, \ref{bl0588} and 
\ref{e03022} 
to prove Theorem \ref{bl0981}.

\begin{proof}[Proof of Theorem \ref{bl0981}]

From (\ref{bl0581}), we know that
\begin{multline}\label{bl0597}
\int_{\var}^{A}\beta_g^u(T_1,u)du-\int_{0}^{T_1}\beta_g^T(T,A)dT-\int_{\var}^{A}\beta_g^u(0,u)du+
\int_{0}^{T_1}\beta_g^T(T,\var)dT
\\
=I_1+I_2+I_3+I_4=0.
\end{multline}
We take the limits $A\rightarrow+\infty$, $T_1\rightarrow+\infty$ and then 
$\var\rightarrow 0$
in the indicated order. Let $I_j^k$, $j=1,2,3,4$, $k=1,2,3$ denote the value 
of the part $I_j$ after
the $k$th limit.

From Theorem \ref{bl0584}, (\ref{bl0583}) and the dominated convergence 
theorem, we conclude that
\begin{align}\label{bl0601}
I_1^3
=-\tilde{\eta}_g(\mF_Y, \mathcal{A}_Y).
\end{align}
Furthermore, by Theorem \ref{bl0588}, we get
\begin{align}\label{bl0600}
I_2^3=-\ch_g(\mathrm{sf}_G\{D(\mF_X)+\mA_X, D(\mF_{X}) 
+T_0\mathcal{V}+\mA_{T_0,Y}\}).
\end{align}
From (\ref{bl0575}), we obtain that
\begin{align}\label{bl0599}
I_3^3=\tilde{\eta}_g(\mF_X,\mA_X).
\end{align}

Finally, we calculate the last part.
By definition,
\begin{align}\label{bl0602}
I_4^0=\int_{0}^{T_1}\beta_g^T(T,\var)dT.
\end{align}
As $A\rightarrow +\infty$, $I_4^0$ remains constant and equal to $I_4^1$.
As $T_1\rightarrow+\infty$, by Theorem \ref{e03022} i),
\begin{align}\label{bl0681}
I_4^2=\int_{0}^{+\infty}\beta_g^T(T,\var)dT=\int_{0}^{+\infty}\var^{-1}\beta_g^T(T/\var,\var)dT.
\end{align}
Set
\begin{align}\label{bl0682}
\begin{split}
&K_1=\int_{0}^{1}\var^{-1}\beta_g^T(T/\var,\var)dT,
\\
&K_2=\int_{\var}^{1}\var^{-2}\beta_g^T(T/\var^{2},\var)dT,
\\
&K_3=\int_{1}^{+\infty}\var^{-2}\beta_g^T(T/\var^{2},\var)dT.
\end{split}
\end{align}
Clearly,
\begin{align}\label{bl0683}
I_4^2=K_1+K_2+K_3.
\end{align}

To simplify the notation, we denote by 
\begin{multline}
	\mD(T):=\frac{1}{2\sqrt{\pi}\sqrt{-1}}
	\frac{2^{\ell_2/2}}{\mathrm{det}^{1/2}(1-g|_{N_{X^g/X}})}
	\\
	\cdot\psi_{X^g} 
	\tr^{\mE_X/\mS}\left[\sigma_{\ell_2}(g^{\mE_X})\mathcal{V}|_{X^g}
	\exp\left(-R_{T^2}^{\mE_X/\mS}|_{X^g}\right)\right].
\end{multline}
Then by Definition \ref{bl0554}, after changing the variable, we have
\begin{align}
	\gamma_g^X(\mF_Y,\mF_X)=\int_0^{\infty}\mD(T)dT.
\end{align}
As $\var\rightarrow 0$, by Theorem \ref{e03022} ii),
\begin{align}\label{bl0684}
K_1\rightarrow -\int_{X^g}
\widehat{\mathrm{A}}_g(TX,\nabla^{TX})\cdot
\int_0^1\mD(T)dT.
\end{align}

We write $K_2$ in the form
\begin{multline}\label{bl0685}
K_2=\int_{\var}^{1}\frac{T}{\var}\left\{\var^{-1}\beta_g^T(T/\var^{2},\var)
+\int_{X^g}
\widehat{\mathrm{A}}_g(TX,\nabla^{TX})
\mD(T/\var) \right\} \frac{dT}{T}
\\
-\int_{X^g}
\widehat{\mathrm{A}}_g(TX,\nabla^{TX})
\int_1^{\var^{-1}}\mD(T)dT.
\end{multline}
By Theorem \ref{e03022} ii), there exist $C>0$, $\gamma\in (0,1]$ such that 
for $0<\var\leq T\leq 1$
\begin{multline}\label{bl0701}
\left|\frac{T}{\var}\left\{\var^{-1}\beta_g^T(T/\var^{2},\var)
+\int_{X^g}
\widehat{\mathrm{A}}_g(TX,\nabla^{TX})
\mD(T/\var) \right\} \right|
\\
\leq C\left(\var\left(1+\frac{T}{\var}\right)\right)^{\gamma}\leq 
C(2T)^{\gamma}.
\end{multline}
Using Theorem \ref{e03022} iii), (\ref{bl0545}), (\ref{bl0701}) and the 
dominated 
convergence theorem, as $\var\rightarrow 0$,
\begin{align}\label{bl0686}
K_2\rightarrow -\int_{X^g}\widehat{\mathrm{A}}_g(TX,\nabla^{TX})\ 
\int_1^{+\infty}\mD(T)dT.
\end{align}

Using Theorem \ref{e03022} iii), iv) and the dominated convergence theorem, 
we see that as $\var\rightarrow 0$,
\begin{align}\label{bl0687}
K_3\rightarrow 0.
\end{align}

Combining Definition \ref{bl0554}, (\ref{bl0683}), (\ref{bl0684}), 
(\ref{bl0686}) and 
(\ref{bl0687}), we see that as $\var\rightarrow 0$,
\begin{align}\label{e03032}
I_4^3=-\int_{X^g}\widehat{\mathrm{A}}_g(TX,\nabla^{TX})\,\gamma_g^X(\mF_Y,\mF_X).
\end{align}

Thus (\ref{bl0982}) follows from (\ref{bl0597})-(\ref{bl0599}) and 
(\ref{e03032}).

The proof of Theorem \ref{bl0981} is completed.

\end{proof}

\subsection{Proof of Theorem \ref{bl0563}}

In this subsection, we use the functoriality of the equivariant eta forms 
Theorem \ref{d188} to 
reduce Theorem \ref{bl0563} to the case when the base manifold is a 
point\footnote{The author thanks Prof. Xiaonan Ma for pointing out this 
	simplification, which is related to a remark in \cite[Section 
	7.5]{Bismut1997}}.
Recall that we may assume that $B$ is compact.

\begin{lemma}\label{bl1004} 
	There exist a $\Z_2$-graded 
	self-adjoint
	$C(TB)$-module $(\mE_B,h^{\mE_B})$ and a positive integer $q\in 
	\mathbb{Z}_+$ 
	such 
	that 
	$$\widehat{\mathrm{A}}(TB,\nabla^{TB})\ch(\mE_B/\mS,\nabla^{\mE_B})-q$$
	is an exact form for any Euclidean connection $\nabla^{TB}$ and Clifford 
	connection $\nabla^{\mE_B}$.    
\end{lemma}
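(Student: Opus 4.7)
By the standard Chern--Weil transgression argument, the de Rham cohomology class $[\widehat{\mathrm{A}}(TB,\nabla^{TB})\,\ch(\mE_B/\mS,\nabla^{\mE_B})]\in H^{\mathrm{even}}(B;\mathbb{R})$ depends on neither the Euclidean connection $\nabla^{TB}$ nor the Clifford connection $\nabla^{\mE_B}$. The lemma therefore reduces to the purely topological assertion that for some $\Z_2$-graded self-adjoint $C(TB)$-module $\mE_B$ and some positive integer $q$, one has $[\widehat{\mathrm{A}}(TB)\,\ch(\mE_B/\mS)]=q\in H^0(B;\mathbb{R})\subset H^{\mathrm{even}}(B;\mathbb{R})$, after which the de Rham theorem provides the required primitive of $\widehat{\mathrm{A}}(TB,\nabla^{TB})\,\ch(\mE_B/\mS,\nabla^{\mE_B})-q$.

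To construct such $\mE_B$, I would use a Whitney embedding $B\hookrightarrow\mathbb{R}^{2N}$ (for $2N$ large and even) with normal bundle $\nu$. Since $TB\oplus\nu\cong\underline{\mathbb{R}^{2N}}$ is trivial, it carries a canonical spin structure, so the spinor bundle $\mS_{TB\oplus\nu}$ is globally defined over $B$. Restricting the Clifford action along $C(TB)\hookrightarrow C(TB\oplus\nu)$ turns $\mS_{TB\oplus\nu}$ into a globally defined $\Z_2$-graded $C(TB)$-module; a direct local computation, taking the $TB$-chirality (rather than the $(TB\oplus\nu)$-chirality) as the grading operator, shows that the resulting twisting Chern character has cohomology class $\prod_j 2\cosh(x_j/2)$, where $\pm x_j$ are the formal Pontryagin roots of $\nu$. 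Crucially, this class has a nonzero constant term (a positive power of two), hence is invertible in $H^{\mathrm{even}}(B;\mathbb{Q})$, as is $\widehat{\mathrm{A}}(TB)$ itself. I would then invoke the rational Atiyah--Hirzebruch isomorphism $\ch\otimes\mathbb{Q}\colon K^0(B)\otimes\mathbb{Q}\xrightarrow{\sim}H^{\mathrm{even}}(B;\mathbb{Q})$ to produce, after clearing denominators, a positive integer $q$ and $\Z_2$-graded Hermitian bundles $G_+,G_-$ on $B$ satisfying $\ch(G_+)-\ch(G_-)=q/\bigl(\widehat{\mathrm{A}}(TB)\prod_j 2\cosh(x_j/2)\bigr)$ in $H^{\mathrm{even}}(B;\mathbb{R})$. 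Setting $\mE_B:=\mS_{TB\oplus\nu}\otimes(G_+\oplus G_-)$ with the tensor-product $\Z_2$-grading, Clifford action in the first factor, and any compatible Hermitian metric and Clifford connection, the multiplicativity of the twisting Chern character yields $[\widehat{\mathrm{A}}(TB)\,\ch(\mE_B/\mS)]=q$ in cohomology.

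The main obstacle is the non-$\mathrm{spin}^{c}$ case for $TB$: one cannot directly twist a spinor bundle of $TB$ itself, since $\mS(TB)$ need not exist globally. The Whitney embedding resolves this, because $TB\oplus\nu$ is automatically spin; the price paid is the extra normal-bundle factor $\prod_j 2\cosh(x_j/2)$ in the twisting Chern character, which is absorbed by the auxiliary twist $G_+\oplus G_-$ via a second application of rational Atiyah--Hirzebruch. Choosing the $\Z_2$-grading on $\mS_{TB\oplus\nu}$ to produce the $\cosh$ rather than the $\sinh$ class is essential: the $\sinh$ alternative is $e(\nu)/\widehat{\mathrm{A}}(\nu)=e(\nu)\,\widehat{\mathrm{A}}(TB)$, which vanishes in degree zero and cannot be inverted rationally, blocking the Atiyah--Hirzebruch step. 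All remaining manipulations are routine Chern--Weil transgressions, and if the resulting $G_+\oplus G_-$ is only a virtual bundle one passes to an actual module by adding a common positive summand with reversed grading on one copy.
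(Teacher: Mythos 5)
Your proposal is correct and follows the paper's overall strategy: reduce to a cohomological identity via Chern--Weil independence, observe that the class $\widehat{\mathrm{A}}(TB)\ch(\mE/\mS)$ for a suitable starting $C(TB)$-module has a positive constant term and is therefore a unit in $H^{\mathrm{even}}(B;\mathbb{Q})$, and then apply the rational Atiyah--Hirzebruch isomorphism $\ch\otimes\mathbb{Q}\colon K^0(B)\otimes\mathbb{Q}\xrightarrow{\sim}H^{\mathrm{even}}(B;\mathbb{Q})$ to produce a twist $G_+-G_-$ that absorbs the positive-degree part and leaves a positive integer $q$. Where you diverge is in the construction of the starting module. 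The paper simply posits an arbitrary $\Z_2$-graded self-adjoint $C(TB)$-module $\mE_0$ and asserts (appealing to \eqref{bl1010}) that the degree-zero coefficient $m$ of $\widehat{\mathrm{A}}(TB,\nabla^{TB})\ch(\mE_0/\mS,\nabla^{\mE_0})$ is a positive integer; since $m$ equals the super-rank of the local twisting bundle $\xi$ in $\mE_0\cong\mS_0(TB)\widehat{\otimes}\xi$, this is really a requirement on the choice of $\mE_0$ (for instance, the exterior algebra $\Lambda(T^*B)\otimes\mathbb{C}$ with the usual form-degree grading gives $m=0$ in even dimension). You address exactly this point head-on by stabilizing $TB$ via a Whitney embedding $B\hookrightarrow\mathbb{R}^{2N}$, taking $\mS_{TB\oplus\nu}$ with the $TB$-chirality as the $\Z_2$-grading, and verifying the constant term is the positive power $\prod_j 2\cosh(x_j/2)|_0=2^{\frac{1}{2}\dim\nu}$; your remark that the opposite grading (the $\sinh$-class $e(\nu)\widehat{\mathrm{A}}(TB)$) would fail is the right sanity check. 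This is a valid and self-contained way to guarantee positivity, though it is a bit heavier than necessary: in even dimension one may instead take $\mE_0=\Lambda(T^*B)\otimes\mathbb{C}$ with the globally-defined chirality operator $\Gamma=(\sqrt{-1})^{n/2}c(e_1)\cdots c(e_n)$ as the $\Z_2$-grading, which already has $\tau^{\mE_0/\mS}=\Id$ and hence $m=2^{n/2}$, and in odd dimension any $C(TB)$-module placed entirely in the even summand works. In short: your proof is correct, it is the same route as the paper's with the existence/positivity of the starting module made explicit by a concrete construction, and it plugs a small gap that the paper leaves implicit.
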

\begin{proof}
	Let $(\mE_0,h^{\mE_0})$ be a $\Z_2$-graded self-adjoint $C(TB)$-module. 
Let $\nabla^{\mE_0}$ be a Clifford connection on $(\mE_0,h^{\mE_0})$.
	Then since the $G$-action is trivial on $B$, from the definition of the 
	$\widehat{\mathrm{A}}$-genus and 
	(\ref{bl1010}), there exists $m\in\Z_+$ such that 
	\begin{align}\label{bl1}
	\widehat{\mathrm{A}}(TB,\nabla^{TB})\cdot
	\ch(\mE_0/\mS,\nabla^{\mE_0})=m+\alpha,
	\end{align}
	where $\alpha\in \Omega^{\mathrm{even}}(B)$ is a closed form and $\deg 
	\alpha\geq 
	2$.
	Since $\alpha$ is nilpotent,
	\begin{align}\label{bl0963} 
	\{\widehat{\mathrm{A}}(TB,\nabla^{TB})\ch(\mE_0/\mS,\nabla^{\mE_0})\}^{-1}
	=\sum_{k=0}^{\infty}(-1)^k\frac{\alpha^k}{m^{k+1}}
	\end{align}
	is a closed well-defined even differential form over $B$. From the 
	isomorphism
	\begin{align}
	\ch: K^0(B)\otimes \mathbb{R}\overset{\simeq}{\rightarrow} 
	H^{\mathrm{even}}(B,\mathbb{R}),
	\end{align} 
	there exist non-zero real number $q\in\mathbb{R}$ and  virtual complex 
	vector 
	bundle 
	$E=E_+-E_-$, such that 
	$q^{-1}\ch([E])=[\{\widehat{\mathrm{A}}(TB,\nabla^{TB})
	\ch(\mE_0/\mS,\nabla^{\mE_0})\}^{-1}]$. Let $\nabla^E$ be a connection 
	on $E$.
	Let $\mE_B=\mE_0\widehat{\otimes} E$ and 
	$\nabla^{\mE_B}=\nabla^{\mE_0}\otimes 1+1\otimes\nabla^{E}$. Then
	$\ch(\mE_B/\mS,\nabla^{\mE_B})=\ch(\mE_0/\mS,\nabla^{\mE_0})\ch(E,\nabla^E)$.
	So we have  
	$$[\widehat{\mathrm{A}}(TB,\nabla^{TB})\ch(\mE_B/\mS,\nabla^{\mE_B})]
	=q\in	 
	H^{\mathrm{even}}(B,\mathbb{R}).$$ From (\ref{bl1}), we have $q\in \Z_+$.
	
	The proof of Lemma \ref{bl1004} is completed.
\end{proof}

Let $(\mE_B,h^{\mE_B})$ be the $C(TB)$-module
 in Lemma \ref{bl1004}. Let $\nabla^{\mE_B}$ be a Clifford connection 
on $(\mE_B,h^{\mE_B})$.
Thus 
\begin{align}\label{bl0961} 
\mF_{V}=(V,\pi_V^*\mE_B\widehat{\otimes}\mE_X, 
\pi_V^*g^{TB}\oplus g^{TX},\pi_V^*h^{\mE_B}\otimes h^{\mE_X}, 
\pi_V^*\nabla^{\mE_B}\otimes 1+1\otimes \nabla^{\mE_X})
\end{align}
is an equivariant geometric 
family over a point in $B$.
Let 
\begin{align}\label{bl0962} 
\mF_{V,t}=(V,\pi_V^*\mE_B\widehat{\otimes}\mE_X, 
g_t^{TV},\pi_V^*h^{\mE_B}\otimes h^{\mE_X}, 
\nabla_t^{\mE_{V}})
\end{align}
be the rescaled equivariant geometric family over a point constructed in the 
same way as 
in (\ref{d073}).

\begin{lemma}\label{bl1051} 
	There exist $t_0>0$ and
	$T'\geq 1$, such that for any $t\geq t_0$ and $T\geq T'$, the 
	operator $D(\mF_{V,t})+tT\mathcal{V}+t\mA_{T,Y}$ is invertible .
\end{lemma}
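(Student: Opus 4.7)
The plan is to reduce invertibility of $D(\mF_{V,t})+tT\mV+t\mA_{T,Y}$ to the fiberwise invertibility of $\mathcal{D}_T := D(\mF_X)+T\mV+\mA_{T,Y}$ already supplied by Proposition \ref{bl0676}, by exploiting the adiabatic shrinking of the $X$-fibers in the rescaled metric $g_t^{TV}$. Applying the functoriality framework of Section 1.4 with $U=V$, $W=B$, $M=X$ (so $Z=V$), and using the Clifford homomorphism $G_t$ of (\ref{e201}) together with the Clifford connection formula (\ref{e197}), one identifies
\begin{align*}
D(\mF_{V,t}) = t\,D(\mF_X) + R_t,
\end{align*}
where $R_t$ collects the horizontal Dirac term $c(f_p^H)\nabla^{\mE_V}_{f_p^H}$, bounded torsion and second-fundamental-form corrections of size $O(1)$, together with $O(1/t)$ pieces arising from the $c(T)$ term of the Bismut superconnection. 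Consequently,
\begin{align*}
D(\mF_{V,t}) + tT\mV + t\mA_{T,Y} = t\,\mathcal{D}_T + R_t.
\end{align*}

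Set $T' := T_0$, the constant furnished by Proposition \ref{bl0676} applied to the compact submanifold $K = B$ (legitimate since $B$ is compact in the current reduction). For every $T \geq T'$ the operator $\mathcal{D}_T$ is fiberwise invertible with uniform bound $\mathcal{D}_T^2 \geq c(T)^2\,\Id$ for some $c(T) > 0$. Squaring the total self-adjoint operator gives
\begin{align*}
(t\,\mathcal{D}_T + R_t)^2 = t^2 \mathcal{D}_T^2 + t\{\mathcal{D}_T, R_t\} + R_t^2,
\end{align*}
with $R_t^2 \geq 0$ and $t^2 \mathcal{D}_T^2 \geq t^2 c(T)^2$. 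The decisive structural input is the extended anticommutation (\ref{bl0995}): on $\pi_V^*\mE_B \widehat{\otimes} \mE_X$, the operators $\mV$ and $\mA_{T,Y}$ anticommute with every $c(v)$, $v \in TV$. Hence each contribution to $\{\mA_T, c(f_p^H)\nabla^{\mE_V}_{f_p^H}\}$ collapses to the zeroth-order expression $c(f_p^H)[\nabla^{\mE_V}_{f_p^H}, \mA_T]$; likewise, in $\{D(\mF_X), R_t\}$ the nominally first-order remnant $c(f_p^H)c(e_i)\nabla^{\mE_V}_{[f_p^H,e_i]}$ cancels modulo bounded curvature and torsion terms via the Bismut identities of \cite[Theorem 1.9]{Bismut1985} already invoked in (\ref{bl0367})--(\ref{bl0371}). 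This furnishes a $t$-independent bound $\|\{\mathcal{D}_T, R_t\}\| \leq C'(T)$, whence
\begin{align*}
(t\,\mathcal{D}_T + R_t)^2 \geq t^2 c(T)^2 - t\,C'(T),
\end{align*}
which is strictly positive for $t \geq t_0 := 2C'(T)/c(T)^2$, yielding the claimed invertibility.

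The main obstacle lies in the clean algebraic handling of the cross term $\{\mathcal{D}_T, R_t\}$. The naive expansion produces a priori first-order terms involving horizontal derivatives, and one must combine the anticommutation convention (\ref{bl0995}) with the Bismut connection identities to verify that the net contribution is genuinely zeroth order in $t$. Once this algebraic reduction is carried out, the remainder is a direct spectral estimate in the spirit of \cite[Section 9]{MR1188532} used in the proof of Proposition \ref{bl0676}.
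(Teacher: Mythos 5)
Your overall decomposition is the same as the paper's: write $D(\mF_{V,t})+tT\mV+t\mA_{T,Y}=t\,\mathcal{D}_T+D_t^B$ with $\mathcal{D}_T=D(\mF_X)+T\mV+\mA_{T,Y}$ fiberwise invertible by Proposition \ref{bl0676}, square, and try to make the cross term negligible compared to $t^2\mathcal{D}_T^2$. However, the pivotal step is wrong: the claim that $\{\mathcal{D}_T,R_t\}$ is a bounded operator with a $t$-independent norm $C'(T)$ does not hold. The horizontal Dirac part $c(f_p)\nabla^{\cE_X,u}_{f_p}$ of $D_t^B$ anticommuted against $D(\mF_X)=c(e_i)\nabla^{\mE_X}_{e_i}$ produces, beyond curvature terms, a genuine first-order operator coming from $\nabla_{[e_i,f_p^H]}$ — the bracket $[e_i,f_p^H]$ is a nonzero vertical vector field in general, and no Bismut identity reduces this to zeroth order. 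The anticommutation convention (\ref{bl0995}) only controls the sign bookkeeping of $\mV,\mA_{T,Y}$ against Clifford variables; it does not collapse derivative terms. If the cross term were truly a zeroth-order bundle endomorphism with a uniform operator bound, the whole adiabatic-limit machinery of \cite[Section 9]{Bismut1997} would be unnecessary.

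What is actually needed — and what the paper supplies — is a two-norm (G\aa rding-type) estimate. One introduces the weighted Sobolev norm $|\cdot|_{T,1}$ of \cite[Definition 9.13]{Bismut1997}, and uses the ellipticity of $\mathcal{D}_T$ in the fiber to obtain
\[
\|\mathcal{D}_T s\|_0^2\;\geq\;C_1\,|s|_{T,1}^2\;-\;C_2\,\|s\|_0^2,
\qquad
\bigl|\langle R_{t,T}s,s\rangle_0\bigr|\;\leq\;C_3\,t\,\|s\|_0\cdot|s|_{T,1},
\]
where $R_{t,T}=t[\mathcal{D}_T,D^B_t]+D^{B,2}_t$. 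The first inequality converts control of $\mathcal{D}_T$ into control of one derivative; the second acknowledges that the cross term is first-order (it is bounded by $\|s\|_0\,|s|_{T,1}$, not by $\|s\|_0^2$). Interpolating (with a fixed convex weight $\alpha$) then yields a lower bound of the form $\alpha C_2 t^2\|s\|_0^2+t(\alpha C_1 t-C_3)|s|_{T,1}^2$, which is positive once $t\geq 2C_3/(\alpha C_1)$. Your sketch skips exactly this coercivity-and-interpolation step, replacing it by an unwarranted operator-norm bound on the cross term; without that bound your final inequality does not follow.
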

\begin{proof} 
	Let $f_1,\cdots,f_l$ be a locally orthonormal basis of $TB$. 
	Let $f_p^H$ be the horizontal lift of $f_p$ on $T^HV$.
	Let 
	$e_1,\cdots,e_n$ be a locally orthonormal basis of $TX$. 
	Set 
	\begin{align}
	D_t^B=c(f_p)\nabla_{f_p}^{\cE_X,u}+\frac{1}{8t}\la [f_{p}^H,f_q^H], 
	e_i\ra 
	c(e_i)c(f_p)c(f_q).
	\end{align}
	By \cite[(5.6)]{MR3626562}, we have
	\begin{align}
	D(\mF_{V,t})+tT\mathcal{V}+t\mA_{T,Y}=t(D(\mF_X)+T\mathcal{V}+\mA_{T,Y})
	+D_t^B.
	\end{align}

	From Proposition \ref{bl0676}, since $B$ is compact, there exist $c>0$ 
	and 
	$T'>0$, such that for any $s\in 
	\Lambda(T^*B)\widehat{\otimes}\cE_X$, $T\geq T'$, 
	\begin{align}\label{bl0946} 
	\|(D(\mF_X)+T\mathcal{V}+\mA_{T,Y})s\|_0^2\geq c^2\|s\|_0^2.
	\end{align}
	
	
	Let 
	\begin{align}
	R_{t,T}:=t[D(\mF_X)+T\mathcal{V}+\mA_{T,Y},D_t^B ]+D_t^{B,2}.
	\end{align}
	We have
	\begin{align}
	(D(\mF_{V,t})+tT\mathcal{V}+t\mA_{T,Y})^2
	=t^2(D(\mF_X)+T\mathcal{V}+\mA_{T,Y})^2+R_{t,T}.
	\end{align}
	
	Let $|\cdot|_{T,1}$ be the norm defined in the same way as 
	\cite[Definition 
	9.13]{Bismut1997}. In particular, 
	\begin{align}
	\|s\|_0\leq |s|_{T,1}.
	\end{align}
	Note that the perturbation operator $\mA_{T,Y}$ is uniformly bounded 
	with 
	respect to $T\geq 1$. From the arguments in the proof of \cite[Theorem 
	9.14]{Bismut1997}, we could obtain that there exist $C_1, C_2, C_3>0$, 
	such 
	that for $T\geq 1$, $t\geq 1$, $s\in 
	\Lambda(T^*B)\widehat{\otimes}\cE_X$,
	\begin{align}\label{bl0945} 
	\begin{split}
	&\|(D(\mF_X)+T\mathcal{V}+\mA_{T,Y})s\|_0^2\geq 
	C_1|s|_{T,1}^2-C_2\|s\|_0^2,
	\\
	&|\la R_{t,T}s, s\ra_0|\leq C_3\,t\,\|s\|_0\cdot|s|_{T,1}.
	\end{split}
	\end{align} 
	
	Take $\alpha=c^2/(c^2+2C_2)$. By (\ref{bl0946})-(\ref{bl0945}), for 
	$T\geq 
	T'$, $t\geq 1$, we have
	\begin{multline}
	\|(D(\mF_{V,t})+tT\mathcal{V}+t\mA_{T,Y})s\|_0^2
	=|\la t^2(D(\mF_X)+T\mathcal{V}+\mA_{T,Y})^2s+ R_{t,T}s,s \ra_0|
	\\
	\geq (1-\alpha)t^2\|(D(\mF_X)+T\mathcal{V}+\mA_{T,Y})s\|_0^2+\alpha 
	t^2\|(D(\mF_X)+T\mathcal{V}+\mA_{T,Y})s\|_0^2-|\la R_{t,T}s, s\ra_0|
	\\
	\geq (1-\alpha)c^2t^2\|s\|_0^2+\alpha C_1t^2|s|_{T,1}^2-\alpha 
	C_2t^2\|s\|_{0}^2-C_3t\|s\|_0\cdot|s|_{T,1}
	\\
	\geq \alpha C_2t^2\|s\|_{0}^2+t(\alpha C_1t-C_3)|s|_{T,1}^2.
	\end{multline}
	Take $t_0=\max\{2C_3/\alpha C_1, 1 \}$. For any $t\geq t_0$, $T\geq 
	T'$, 
	there exists $C>0$, such that
	\begin{align}\label{bl0964} 
	\|(D(\mF_{V,t})+tT\mathcal{V}+t\mA_{T,Y})s\|_0^2\geq Ct^2\|s\|_{0}^2.
	\end{align}
	Since $D(\mF_{V,t})+tT\mathcal{V}+t\mA_{T,Y}$ is self-adjoint, by 
	(\ref{bl0964}), it is surjective. Thus 
	$D(\mF_{V,t})+tT\mathcal{V}+t\mA_{T,Y}$ is invertible.
	
	The proof of Lemma \ref{bl1051} is completed.
\end{proof}

Let
\begin{align}\label{bl0966} 
\mF_{W}=(W,\pi_W^*\mE_B\widehat{\otimes}\mE_Y, 
\pi_W^*g^{TB}\oplus g^{TY},\pi_W^*h^{\mE_B}\otimes h^{\mE_Y}, 
\pi_W^*\nabla^{\mE_B}\otimes 1+1\otimes \nabla^{\mE_Y})
\end{align}
be the equivariant geometric 
family over a point in $B$.
Let 
\begin{align}\label{bl0965} 
\mF_{W,t}=(W,\pi_W^*\mE_B\widehat{\otimes}\mE_Y, 
g_t^{TW},\pi_V^*h^{\mE_B}\otimes h^{\mE_Y}, 
\nabla_t^{\mE_{W}})
\end{align}
be the rescaled equivariant geometric family constructed in the same way as 
in (\ref{d073}) and (\ref{bl0962}).

Firstly, we assume that $B$ is closed.
Let $t_0$ be the constant taking in Lemma \ref{bl1051}. We may assume that 
when $t\geq t_0$, $D(\mF_{W,t})+1\widehat{\otimes}t\mA_Y$ is invertible by 
the arguments 
before Theorem \ref{d188}.
By Theorem \ref{d188} and Lemma \ref{bl1051}, we have
\begin{multline}\label{bl0949} 
\widetilde{\eta}_g(\mF_{W, t_0}, 
1\widehat{\otimes}t_0\mA_{Y})=\int_{B}\widehat{\mathrm{A}}(TB,\nabla^{TB})
\ch(\mE_B/\mS,\nabla^{\mE_B})\, \widetilde{\eta}_g(\mF_{Y},\mA_Y)
\\
-\int_{W^g}\left(\widetilde{\widehat{\mathrm{A}}}_g\cdot
\widetilde{\ch_g}\right)\left(\nabla_{t_0}^{TW}, \nabla^{TB,TY}, 
\nabla_{t_0}^{\mE_W}, \,^0\nabla^{\mE_W}\right)
\end{multline}
and
\begin{multline}
\widetilde{\eta}_g(\mF_{V, t_0}, 1\widehat{\otimes}(t_0T'\mathcal{V}+ 
t_0\mA_{T',Y}))
\\
=\int_{B}\widehat{\mathrm{A}}(TB,\nabla^{TB})
\ch(\mE_B/\mS,\nabla^{\mE_B})\, 
\widetilde{\eta}_g(\mF_{X},T'\mathcal{V}+\mA_{T',Y})
\\
-\int_{V^g}\left(\widetilde{\widehat{\mathrm{A}}}_g\cdot
\widetilde{\ch_g}\right)\left(\nabla_{t_0}^{TV}, \nabla^{TB,TX}, 
\nabla_{t_0}^{\mE_V}, \,^0\nabla^{\mE_V}\right).
\end{multline}

Set 
\begin{multline}
\Delta_B=\tilde{\eta}_g(\mF_X, 
T'\mathcal{V}+A_{T',Y})-\tilde{\eta}_g(\mF_Y, 
\mathcal{A}_Y)
\\
-\int_{X^g}\widehat{\mathrm{A}}_g(TX, 
\nabla^{TX})\,\gamma_g^X(\mF_Y,\mF_X)\in \Omega^*(B,\C)/\Im d.
\end{multline}
From \cite[(1.17)]{MR2273508}, Theorem \ref{bl0558} and (\ref{e01085}), , we 
have $d^B\Delta_B=0$.

Recall that $\mathcal{V}\in \End_{C(TX)}^{}(\mE_X)$ satisfies the 
fundamental 
assumption (\ref{bl1011}) with respect to $\mF_Y$ and $\mF_X$. 
Let $1\widehat{\otimes} \mathcal{V}$ is the extension of $\mV$ on 
$\pi_V^*\mE_B\widehat{\otimes}\mE_X$ in the same way as $\mA$ in 
(\ref{bl0995}).
Then 
$1\widehat{\otimes} \mathcal{V}$ satisfies the 
fundamental assumption (\ref{bl1011}) with respect to $\mF_W$ and $\mF_V$. 
Furthermore, $1\widehat{\otimes} t\mathcal{V}$ satisfies the fundamental 
assumption 
(\ref{bl1011}) with respect to $\mF_{W,t}$ and $\mF_{V,t}$. Observe that 
$\gamma_g^V(\mF_{W,t},\mF_{V,t})$ does not depend on $t$. We also denote it 
by $\gamma_g^V(\mF_{W},\mF_{V})$.

From Theorem \ref{bl0981}, if $\dim V$ is odd, there exists $T_0>0$ such that
\begin{multline}\label{bl0954}
\tilde{\eta}_g(\mF_{V,t_0}, \mA_{V})=\tilde{\eta}_g(\mF_{W,t_0}, 
1\widehat{\otimes}t_0\mathcal{A}_{Y})+\int_{V^g}\widehat{\mathrm{A}}_g(TX, 
\nabla_{t_0}^{TX})\,\gamma_g^X(\mF_{W},\mF_{V})
\\
+\ch_g(\mathrm{sf}_G\{D(\mF_{V,t_0})+\mA_V, D(\mF_{V,t_0}) 
+1\widehat{\otimes} (T_0t_0\mathcal{V}+ t_0\mA_{T_0,Y})\})
.
\end{multline}
We may assume that $T_0\geq T'$, which is determined in Lemma \ref{bl1051}.
By anomaly formula Theorem \ref{e01105}, we have
\begin{multline}\label{bl0947} 
\tilde{\eta}_g(\mF_{V,t_0},1\widehat{\otimes} (T_0t_0\mathcal{V}+ 
t_0\mA_{T_0,Y}))=\tilde{\eta}_g(\mF_{W,t_0}, 
1\widehat{\otimes}t_0\mathcal{A}_{Y})
\\
+\int_{V^g}\widehat{\mathrm{A}}_g(TV, 
\nabla_{t_0}^{TV})\,\gamma_g^V(\mF_{W},\mF_{V}).
\end{multline}
Note that if $\dim V$ is even, (\ref{bl0947}) also holds, because in this 
case all terms 
in (\ref{bl0947}) vanish.

From the anomaly formula Theorem \ref{e01105}, Lemma \ref{bl1051} and 
(\ref{bl0947}), for 
$t>t_0$, 
we have
\begin{multline}\label{bl0948} 
\int_{V^g}\left(\widetilde{\widehat{\mathrm{A}}}_g\cdot
\widetilde{\ch_g}\right)\left(\nabla_{t_0}^{TV}, \nabla_{t}^{TV}, 
\nabla_{t_0}^{\mE_V}, \nabla_t^{\mE_V}\right)
\\
=\int_{W^g}\left(\widetilde{\widehat{\mathrm{A}}}_g\cdot
\widetilde{\ch_g}\right)\left(\nabla_{t_0}^{TW}, \nabla_{t}^{TW}, 
\nabla_{t_0}^{\mE_W}, \nabla_t^{\mE_W}\right)
\\
-\int_{V^g}\widehat{\mathrm{A}}_g(TV, 
\nabla_{t_0}^{TV})\,\gamma_g^V(\mF_{W},\mF_{V})
\\
+\int_{V^g}\widehat{\mathrm{A}}_g(TV, 
\nabla_{t}^{TV})\,\gamma_g^V(\mF_{W},\mF_{V}).
\end{multline}
Note that locally the manifolds are spin.
From \cite[Proposition 4.5]{MR3626562} and the arguments in \cite[Section 
5.5]{MR3626562}, we have
\begin{multline}
\lim_{t\rightarrow +\infty}\left(\widetilde{\widehat{\mathrm{A}}}_g\cdot
\widetilde{\ch_g}\right)\left(\nabla_{t_0}^{TV}, \nabla_{t}^{TV}, 
\nabla_{t_0}^{\mE_V}, \nabla_t^{\mE_V}\right)
\\
=\left(\widetilde{\widehat{\mathrm{A}}}_g\cdot
\widetilde{\ch_g}\right)\left(\nabla_{t_0}^{TV}, \nabla^{TB,TX}, 
\nabla_{t_0}^{\mE_V}, \,^0\nabla^{\mE_V}\right)
\end{multline}
and
\begin{multline}
\lim_{t\rightarrow +\infty}\widehat{\mathrm{A}}_g(TV, 
\nabla_{t}^{TV})=\widehat{\mathrm{A}}_g(TV, 
\nabla^{TB,TX})
\\
=\pi_V^*\widehat{\mathrm{A}}(TB, 
\nabla^{TB})\cdot\widehat{\mathrm{A}}_g(TX, \nabla^{TX}).
\end{multline}

By Definition \ref{bl0554}, we have
\begin{align}\label{bl0950}
\gamma_g^V(\mF_{W},\mF_{V})=\ch(\mE_B/\mS,\nabla^{\mE_B})\gamma_g^X(\mF_{Y},\mF_{X}).
\end{align}

From Lemma \ref{bl1004} and (\ref{bl0949})-(\ref{bl0950}), if $B$ is 
closed, we have
\begin{align}\label{bl0953} 
\int_B\Delta_B=q^{-1}\cdot\int_B 
\widehat{\mathrm{A}}(TB,\nabla^{TB})\ch(\mE_B/\mS,\nabla^{\mE_B})\cdot\Delta_B=0.
\end{align}

In general, $B$ is not necessary closed. Let $K$ be a closed submanifold 
of $B$. 
Let $\mF_Y|_K$ and $\mF_X|_K$ be the restrictions of $\mF_Y$ and $\mF_X$ on 
$K$. Let $T_0\geq 1$ be the constant determined in Proposition \ref{bl0676} 
associated with $B$. Then $(T\mV+\mA_{T,Y})|_K$ is the perturbation operator 
with respect to $D(\mF_X|_K)$ over $K$ for $T\geq T_0$. Set
\begin{multline}
\Delta_K=\tilde{\eta}_g(\mF_X|_K, 
(T_0\mathcal{V}+A_{T_0,Y})|_K)-\tilde{\eta}_g(\mF_Y|_K, 
\mathcal{A}_Y|_K)
\\
+\int_{X^g}\widehat{\mathrm{A}}_g(TX, 
\nabla^{TX})\,\gamma_g^X(\mF_Y|_K,\mF_X|_K)\in \Omega^*(K,\C)/\Im d.
\end{multline}
From Definition \ref{d017} and \ref{bl0554}, we could see that
$
\int_K\Delta_B=\int_K\Delta_K.
$

On the other hand, from (\ref{bl0953}), we have
$
\int_K\Delta_K=0.
$
So for any closed submanifold $K$ of $B$, we have 
\begin{align}\label{bl0998} 
\int_K\Delta_B=0.
\end{align}
From the arguments in \cite[\S 21, \S 22]{MR0346830}, we obtain that 
$\Delta_B$ is 
exact on $B$. Therefore, we obtain Theorem \ref{bl0563} from the anomaly 
formula 
Theorem \ref{e01105}.

The proof of our main result is completed.

\vspace{3mm}\textbf{Acknowledgements}
I would like to thank Prof. Xiaonan Ma for many discussions. I would like to 
thank the Mathematical Institute of
Humboldt-Universit\"at zu Berlin and University of California, Santa 
Barbara, especially Prof. J. Br\"uning and Xianzhe Dai for financial support 
and hospitality.  Part of the work was done while I was visiting Institut 
des Hautes \'Etudes Scientifiques (IHES) and Max Planck Institute for 
Mathematics (MPIM) which I thank the financial support. 

This research has been financially supported by  the China Postdoctoral 
Science Foundation
(No.2017M621404).




\end{document}